\def\Rset{\mathbb R}
\def\nset{\mathbb N}
\def\zset{\mathbb Z}
\def\cset{\mathbb C}
\def\rset{\mathbb R}
\def\cov{\mathrm{Cov}}
\def\esp{\mathbb E}
\def\pr{\mathbb P}
\def\var{\mathrm{Var}}
\def\1{\mathbbm 1}
\def\rmd{\mathrm d}
\def\rme{\mathrm e}
\def\rmi{\mathrm i}
\def\rmI{\mathrm I}  
\def\zed{{z}}
\newcommandx{\stat}[2][2=]{{#1}^{<\text{\tiny{S}}>\,#2}}
\newcommand{\locstat}[1]{{#1}^{<\text{\tiny{LS}}>}}
\DeclareMathOperator*{\esssup}{ess\,sup}
\newcommandx{\betnorm}[3][2=\beta,3=\delta]{\left|#1\right|_{\langle#2,#3\rangle}}
\newcommandx{\betonenorm}[2][2=\beta]{\left|#1\right|_{\{#2\}}}
\newcommandx{\ellnorm}[2][2=1]{\left|#1\right|_{#2}}
\newcommandx{\ellexpnorm}[3][2=1,3=d]{\left|#1\right|_{#3,#2}}
\newcommandx{\ellbetanorm}[3][2=1,3=\beta]{\left|#1\right|_{(#3),#2}}
\def\mcesy{\mathcal{E}}
\def\mcesyt{\tilde{\mathcal{E}}}
\newcommandx{\mce}[1][1=g]{\ifthenelse{\equal{#1}{g}}{\mcesy}{\mcesyt}}
\newcommandx{\fixedpoint}[1][1=g]{\ifthenelse{\equal{#1}{g}}{g}{\tilde{g}}}
\newcommandx{\mcei}[1][1=g]{\fixedpoint[#1]_{\infty}}
\newcommandx{\mcet}[1][1=g]{\mce[#1]^{(T)}}
\newcommandx{\mceti}[1][1=g]{\fixedpoint[#1]_{\infty}^{(T)}}
\newcommandx{\mces}[1][1=g]{\stat{\mce[#1]}}
\newcommandx{\mcesi}[1][1=g]{\stat{\fixedpoint[#1]}_{\infty}}
\newcommandx{\ballellnorm}[2][2=1]{B_{#2}\left(#1\right)}
\newcommandx{\ballellnormp}[2][2=1]{B_{+,#2}\left(#1\right)}
\newcommandx{\D}[1][1=\ouvert]{\bar{\mathcal{O}}\left(#1\right)}
\newcommandx{\partialD}{\partial_{\mathcal{O}}}
\newcommandx{\Dp}[2][1=1,2=\ouvert]{\bar{\mathcal{O}}_{#1}\left(#2\right)}
\newcommandx{\normD}[3][2=1,3=K]{\left|#1\right|_{\bar{\mathcal{O}},#3,#2}}
\newcommandx{\bdp}[3][2=1,3=K]{B_{\bar{\mathcal{O}}}\left(#1;#3,#2\right)}
\newcommandx{\bdbet}[3][2=\beta,3=K]{B_{\bar{\mathcal{O}}}\left(#1;#3,(#2)\right)}
\newcommandx{\betseminormD}[3][2=\beta,3=K]{\left|#1\right|_{\bar{\mathcal{O}},#3,(#2)}}
\newcommandx{\dd}[1][1=\ouvert]{\mathcal{O}\left(#1\right)}
\newcommandx{\normd}[2][2=K]{\left|#1\right|_{\mathcal{O},#2}}
\newcommandx{\ballbetnorm}[2][2=\beta]{B_{\langle#2\rangle}\left(#1\right)}
\newcommandx{\allnorm}[2][2=\beta]{\left|#1\right|_{[#2]}}
\newcommandx{\ballallnorm}[2][2=\beta]{B_{[#2]}\left(#1\right)}
\newcommandx{\ballallnormp}[2][2=\beta]{B_{+,[#2]}\left(#1\right)}
\newcommandx{\zzeta}[1][1=1]{{\zeta}_{#1}}
\newcommandx{\zetals}[1][1=1]{\locstat{\zeta}_{#1}}
\newcommandx{\zetabet}[1][1=\beta]{\locstat{\zeta}_{(#1)}}
\newcommandx{\holderls}[1][1=\beta]{\xi^{(#1)}}
\newcommandx{\holderlsc}[1][1=\beta]{\xi_c^{(#1)}}
\newcommandx{\zetaexp}[2][1=1]{{\zeta}_{#1}(#2)}
\newcommandx{\zetaexpls}[2][1=1]{\locstat{\zeta}_{#1}(#2)}
\newcommand{\beq}{\begin{equation}}
\newcommand{\eeq}{\end{equation}}
\def\ouvert{U}
\def\supp{\mathrm{Supp}}
\theoremstyle{definition}
\newtheorem{convention}{Convention}
\theoremstyle{plain}
\newtheorem{theorem}{Theorem}
\newtheorem{proposition}[theorem]{Proposition}
\newtheorem{corollary}[theorem]{Corollary}
\newtheorem{lemma}[theorem]{Lemma}
\newtheorem{remark}{Remark}
\newenvironment{assumption}[1]{\begin{enumerate}[label=(\textbf{\sf #1}-\arabic*),resume=hyp#1]\begin{sf}}{\end{sf}\end{enumerate}}
\title{Time-frequency analysis of locally stationary Hawkes processes}
\author[1]{F.\,Roueff}
\author[2]{R.\,von Sachs}
\date{\today}
\affil[1]{LTCI, Télécom ParisTech, Université Paris-Saclay}
\affil[2]{Institut de statistique, biostatistique et sciences actuarielles (ISBA) IMMAQ, Université catholique de Louvain}
\newcommandx{\bfreq}{\ensuremath{b_2}}%frequency bandwidth
\newcommandx{\btime}{\ensuremath{b_1}}%time bandwidth
\newcommandx{\ktime}{\ensuremath{w}}%time kernel (used to be $W$)
\newcommandx{\kbtime}{\ensuremath{w_{\btime}}}% rescaled absolute time kernel (used to be $W_{\btime}$)
\newcommandx{\ktbtime}{\ensuremath{w_{T\btime}}}%rescaled real  time kernel (used to be $W_{T\btime}$)
\newcommandx{\fktime}{\ensuremath{W}}%fourier transform of time kernel
\newcommandx{\fkbtime}{\ensuremath{W_{\btime}}}%fourier transform of  rescaled absolute time kernel
\newcommandx{\fktbtime}{\ensuremath{W_{T\btime}}}%fourier transform of  rescaled real time kernel
\newcommandx{\kfreq}{\ensuremath{q}}%frequency kernel (used to be $K$)
\newcommandx{\kbfreq}{\ensuremath{q}_{\bfreq}}% frequency kernel with bandwidth 
\newcommandx{\fkfreq}{\ensuremath{Q}}%fourier transform of rescaled frequency kernel
\newcommandx{\kbofreq}[1][1=\omega_0]{\ensuremath{q_{#1,\bfreq}}}%translated rescaled frequency kernel
\newcommandx{\fkbofreq}[1][1=\omega_0]{\ensuremath{Q_{#1,\bfreq}}}%fourier
\newcommandx{\fourierp}{\ensuremath{P}}
\newcommandx{\fourierf}{\ensuremath{F}}
\newcommand{\conjugate}[1]{{#1}^{*}}
\begin{document}

\maketitle
\abstract{Locally stationary Hawkes processes have been introduced in order to
  generalise classical Hawkes processes away from stationarity by allowing for
  a time-varying second-order structure. This class of self-exciting point
  processes has recently attracted a lot of interest in applications in the
  life sciences (seismology, genomics, neuro-science,...), but also in the
  modeling of high-frequency financial data. In this contribution we provide a
  fully developed nonparametric estimation theory of both local mean density
  and local Bartlett spectra of a locally stationary Hawkes process. In
  particular we apply our kernel estimation of the spectrum localised both in
  time and frequency to two data sets of transaction times revealing pertinent
  features in the data that had not been made visible by classical
  non-localised approaches based on models with constant fertility functions
  over time.}

\medskip

\noindent{\bf Keywords:} Time  frequency analysis; Locally stationary time series; high
frequency financial data; Non-parametric kernel estimation; Self-exciting point
processes.

% \clearpage
% \tableofcontents
% \clearpage

\section{Introduction}
\label{sec:intro}

Many recent time series data modelling and analysis problems increasingly face
the challenge of occurring time-variations of the underlying probabilistic
structure (mean, variance-covariance, spectral structure,....). This is due to
the availability of larger and larger data stretches which can hardly any
longer be described by stationary models. Mathematical statisticians
(\cite{dahlhaus96}, \cite{zhou-wu2009}, \cite{hallin-et-al2014}, among others)
have contributed with time-localised estimation approaches for many of these
time series data based on rigorous models of {\em locally stationary
  approximations} to the non-stationary data. Often it has been only via these
theoretical studies that well-motivated and fully understood {\em
  time-dependent estimation} methods could be developed which correctly adapt
to the degree of deviation of the underlying data from a stationary situation
(e.g. via the modelling of either a slow - or in contrast a rather abrupt --
change of the probabilistic structure over time). For many of these situations,
the development of an asymptotic theory of doubly-indexed stochastic processes
has proven to be useful: the underlying data stretch is considered to be part
of a family of processes which asymptotically approaches a limiting process
which locally shows all the characteristics of a stationary process (hence
``the locally stationary approximation''). Accompanying estimators ought to
adapt to this behavior, e.g. by introduction of local bandwidths in time.  Many
of these aforementioned approaches have been achieved in the context of
classical real-valued series in discrete time, e.g. for (linear) time series
via time-varying MA($\infty$) representations which apply for a large class of
models (see, e.g., \cite{dahlhaus2000}).
Once those are available, the development of a rigorous asymptotic estimation
theory is achievable.

This approach, however, is not directly applicable to the model of {\em locally
  stationary Hawkes processes}, a class of self-exciting point processes
introduced in \cite{roueff-vonsachs-sansonnet2016} with not only time-varying
baseline intensity (such as in \cite{chen-hall-2013}) but also time-varying
fertility function. The fertility function describes the iterative
probabilistic mechanism for Hawkes processes to generate offsprings from each
occurrence governed by a conditional Poisson point process, given all previous
generations. Often, a fertility function $p(t)$ with exponential decay over
time $t$ is assumed. The condition $\int p<1$ ensures a non-explosive
accumulation of consecutive populations of the underlying process. When this
fertility function is made varying over time through a second argument, one can
still rely on the point process mechanism to derive locally stationary
approximations.

In this paper we treat estimation of the first and second order structure
of locally stationary Hawkes processes on the real line, with a
(time-dependent) fertility function $p(\cdot; t)$ assumed to be causal,
i.e. supported on $\rset_+$.  Whereas for estimating the local mean density of
the process it is sufficient to introduce a localisation via a short window
(kernel) in time, for estimating the second-order structure, i.e. the local
Bartlett spectrum of the process, one needs to localise both in time and
frequency: this time-frequency analysis will be provided via a pair of kernels
which concentrate around a given point $(t,\omega)$ in the time-frequency
plane. Using an appropriate choice of bandwidths in time and in frequency which
tend to zero with rates calibrated to minimize the asymptotic mean-square error
between the time-frequency estimator and the true underlying local Bartlett
spectrum, one can show consisteny of our estimator of the latter one.  Note
that the use of kernels for non-parametric estimation with counting process is
not new. To the best of our knowledge this was introduced first by
\cite{ramlau-hansen83} for estimating regression point process models such as
those introduced in \cite{aalen75}, see also \cite{andersen85} for a general
account on the estimation of such processes.

Self exciting point processes have been recently used for modelling point
processes resulting from high frequency financial data such as price jump
instants (see e.g. \cite{bacry-delattre-hoffmann-muzy-2010}) or limit order
book events (see e.g. \cite{zheng-roueff-abergel2014}). In this paper we will
illustrate our time frequency analysis approach for point processes on
transaction times of two assets (which are Essilor International SA and Total
SA).

The rest of the paper is organised as follows. In
Section~\ref{sec:main-defin-assumpt}, we introduce all necessary preparatory
material to develop our estimation theory, including the definition of the
population quantities, i.e. local mean density and local Bartlett spectra that
we wish to estimate. Section~\ref{sec:asymp_est_theory} defines our kernel
estimators and treats asymptotic bias and variance developments of those under
regularity assumptions to be given beforehand. Although these developments are
far from being direct and straightforward, the resulting rates of convergence
are completely intuitive from a usual nonparametric time-frequency estimation
point of view.  Section~\ref{sec:numer-exper} presents the analysis of our
transaction data sets leading to interesting observations that have not
previously been revealed by a classical analysis with time-constant fertility
functions. In Section~\ref{sec:dev-bounds}, we present, as a result of
independent interest in its own, the necessary new techniques of directly
controlling moments of non-stationary Hawkes processes. 
All proofs
are deferred to the Appendix.

\section{Preparatory material}\label{sec:main-defin-assumpt}

In this preparatory section we prepare the ground for developing the
presentation of our asymptotic estimation theory of local Bartlett spectra. We
do this by introducing a list of useful conventions and definitions, as well as
recalling the main concepts and results of \cite{roueff-vonsachs-sansonnet2016}
in as much as they are necessary.
\subsection{Conventions and notation}
\label{sec:conventions-notation}
We here set some general conventions and notation adopted all along the paper.
Additional ones are introduced in relation with the main assumptions in
Section~\ref{sec:LS-Assumptions}. 

\noindent A point process is identified with a random measure with discrete support,
$N=\sum_k \delta_{t_k}$ typically, where $\delta_t$ is the Dirac measure at point
$t$ and $\{t_k\}$ the corresponding (locally finite) random set of points. We use
the notation $\mu(g)$ for a measure $\mu$ and a function $g$ to express $\int g
\, \rmd \mu$ when convenient. In particular, for a measurable set $A$,
$\mu(A)=\mu(\1_A)$ and for a point process $N$, $N(g)=\sum_k g(t_k)$.
The shift operator of lag $t$ is denoted by  $S^{t}$. For a set $A$,
$S^{t}(A)=\{x-t, \, x \in A\}$ and for a function $g$, $S^{t}(g)=g(\cdot+t)$,
so that $S^{t}(\1_A)=\1_{S^{t}(A)}$. One can then compose a measure $\mu$ with
$S^t$, yielding for a function $g$, $\mu\circ S^t(g)=\mu(g(\cdot+t)$.

We also need some notation for the functional norms which we deal with in this work.
Usual $L^p$-norms are denoted by $\ellnorm{h}[p]$,
$$
\ellnorm{h}[p]=\left(\int |h|^p\right)^{1/p} \;,
$$
for $p\in[1,\infty)$ and $\ellnorm{h}[\infty]$ is the essential supremum on
$\rset$,
$\ellnorm{h}[\infty]=\esssup_{t\in\rset}\left|h(t)\right| \;.$
We also use the following  weighted  $L^p$ norms which we define to be for any $p\geq1$, 
$\beta>0$, $a\geq0$ and $h:\rset\to\cset$,
\begin{align}
  \label{eq:pow-weight-norm}
&\ellbetanorm{h}[p][\beta]
:=\ellnorm{h\times|\cdot|^\beta}[p]=\left(\int
  \left|h(t)\ t^\beta\right|^p\rmd t\right)^{1/p}\; ,\\
  \label{eq:exp-weight-norm}
&\ellexpnorm{h}[p][a]:=
\ellnorm{h\times\rme^{a|\cdot|}}[p]=\left(\int |h(t)|^p\,\rme^{a\,p\,|t|}\;\rmd
  t\right)^{1/p}\; ,
\end{align}
with the above usual essential sup extensions to the case $p=\infty$.

\noindent We
denote the convolution product by $\ast$, that is,
for any two functions $h_1$ and $h_2$,
$$
h_1\ast h_2(s)=\int h_1(s-t)h_2(t)\rmd t\;.
$$
Finally we use for a random variable $X$ the notation 
\begin{equation}
  \label{eq:notation-q-norm-Lq}
 \left\|X\right\|_p :=
\left(\esp |X|^p\right)^{1/p}, \quad\text{for}\quad  p \geq1\;.  
\end{equation}

\subsection{From stationary to non-stationary and locally stationary Hawkes processes}
\label{sec:hawkes-processes-as-cluster}

To start with, we first recall the definition of a stationary (linear) Hawkes
process $N$ with immigrant intensity $\lambda_c$ and fertility function $p$ defined
on the positive half-line. The conditional intensity function $\lambda(t)$ of
such a process is driven by the fertility function taken at the time distances
to previous points of the process, i.e. $\lambda(t)$ is given by
\begin{equation}\label{eqn:motiv}
\lambda(t)= \lambda_c + \int_{-\infty}^{t^-} p(t-s)\;N(\rmd s)\ = \ \lambda_c + \sum_{t_i < t} p(t-t_i) \; .
\end{equation}
Here the first integral is to be read as the integral of the ``fertility''
function $p$ with respect to the counting process $N$, which is a sum of Dirac
masses at (random) points $(t_i)_{i\in\zset}$.  The existence of a stationary
point processes with conditional intensity~(\ref{eqn:motiv}) holds under the
condition $\int p<1$ and  can be constructed as a cluster point process (see
\cite[\textsc{Example}~6.3(c)]{daley-vere-jones-2003}).

We extend the stationary Hawkes model defined by the conditional
intensity~(\ref{eqn:motiv}) to the non-stationary case by authorizing the
immigrant intensity $\lambda_c$ to be a function $\lambda_c(t)$ of time $t$ and also the
fertility function $p$ to be time varying, replacing $p(t-s)$ by the more
general $p(t-s;t)$. To ensure a locally finite point process in this
definition, we impose the two conditions
\begin{equation}
  \label{eq:cond-density-nonstat}
\zzeta:=\sup_{t\in\rset}\int p(s;t)\,\rmd s< 1\quad\text{and}\quad \ellnorm{\lambda_c}[\infty]<\infty\;.
\end{equation}
They yield the existence of a non-stationary point process $N$ with a mean density function which is uniformly bounded
by $\ellnorm{\lambda_c}[\infty]/(1-\zzeta)$ (see
\cite[Definition~1]{roueff-vonsachs-sansonnet2016}).  

As non-stationary Hawkes processes can evolve quite arbitrarily over time, the
statistical analysis of them requires to introduce local stationary
approximations in the same fashion as time varying autoregressive processes in
time series, for which \emph{locally stationary} models have been successfully
introduced (see~\cite{dahlhaus96}).  Thus, a locally stationary Hawkes process
with \emph{local immigrant intensity} $\locstat{\lambda}_c(\cdot)$ and
\emph{local fertility function} $\locstat{p}(\cdot;\cdot)$ is a collection
$(N_{T})_{T\geq1}$ of non-stationary Hawkes processes with respective immigrant
intensity and fertility function given by
$\lambda_{cT}(t)=\locstat{\lambda}_c(t/T)$ and varying fertility function given
by $p_T(\cdot;t)=\locstat{p}(\cdot;t/T)$, see
\cite[Definition~2]{roueff-vonsachs-sansonnet2016} where this model is called a
\emph{locally stationary} Hawkes process. For a given \emph{real location} $t$,
the scaled location $t/T$ is typically called an \emph{absolute location} and
denoted by $u$ or $v$.

Note that the collection
$(N_{T})_{T\geq1}$ of non-stationary Hawkes processes are defined using the same
time varying parameters  $\locstat{\lambda}_c$ and
$\locstat{p}$ but with the time varying arguments scaled by $T$.
As a result, the larger $T$ is, the slower the parameters evolve along the
time.

An assumption corresponding to~(\ref{eq:cond-density-nonstat}) to guarantee
that, for all $T\geq1$, the non-stationary Hawkes process $N_{T}$ admits
a uniformly bounded mean density function is the following:
\begin{equation} \label{eq:locally-stat-hawk-finite-intensity}
\zetals:=\sup_{u\in\Rset} \int \locstat{p}(r;u)\; \rmd r < 1
\quad\text{and}\quad\ellnorm{\locstat{\lambda}_c}[\infty]<\infty \; .
\end{equation}
Under this assumption, for each absolute location $u\in\rset$, the function
$r\mapsto\locstat{p}(r;u)$ satisfies the required condition for the fertility
function of a stationary Hawkes process. Hence, assuming
(\ref{eq:locally-stat-hawk-finite-intensity}), for any absolute location $u$,
we denote by $N(\cdot;u)$ a stationary Hawkes process with constant immigrant
intensity $\locstat{\lambda}_c(u)$ and fertility function
$r\mapsto\locstat{p}(r;u)$. In the following subsection we will include this
assumption~(\ref{eq:locally-stat-hawk-finite-intensity}) into a stronger set of
assumptions that we use for derivation of the results on asymptotic estimation
theory.

We also remark that, for any $T\geq1$, the conditional intensity function
$\lambda_T$ of the non-stationary Hawkes process $N_T$ takes the form
\begin{align*} 
\lambda_T(t)& = \locstat{\lambda}_c\left(t/T\right) + \int_{-\infty}^{t^-}
\locstat{p}\left(t-s;t/T\right)\;N_T(d s)\\
& =\locstat{\lambda}_c(t/T)+\sum_{t_{i,T}<t} \locstat{p}(t-t_{i,T};t/T)\;,
\end{align*}
where $(t_{i,T})_{i\in\zset}$ denote the points of $N_T$.
This latter formula can also be used to simulate locally stationary
Hawkes processes on the real line. 
The examples for locally stationary Hawkes processes (with time varying Gamma shaped
fertility functions) used in~\cite[Section
2.6]{roueff-vonsachs-sansonnet2016} were simulated in this way.

First and second order statistics for point processes are of primary 
importance for statistical inference. As for time series they are conveniently
described in the stationary case by a mean parameter for the first order
statistics and a spectral representation, the so called Bartlett spectrum (see
\cite[Proposition~8.2.I]{daley-vere-jones-2003}), for the covariance
structure. The locally stationary approach allows us to define such quantities
as depending on the absolute time $u$ as introduced in the following section.

\subsection{Local mean density and Bartlett spectrum}
\label{sec:local-mean-density-bartlett}
Consider a locally stationary Hawkes process $(N_{T})_{T\geq1}$ with local
immigrant intensity $\locstat{\lambda}_c$ and local fertility function
$\locstat{p}(\cdot;\cdot)$ satisfying
condition~(\ref{eq:locally-stat-hawk-finite-intensity}).  Although for a
given $T$, the first and second order statistics of $N_T$ can be quite
involved, some intuitive asymptotic approximations are available as $T$ grows
to infinity.  Namely, for any absolute time $u$, the \emph{local} statistical
behavior of $N_T$ around real time $Tu$ has to be well approximated by that of a
stationary Hawkes process with (constant) immigrant intensity
$\locstat{\lambda}_c(u)$ and fertility function
$\locstat{p}(\cdot;u)$.  This stationary Hawkes process at
absolute location $u$ is denoted in the following by $N(\cdot;u)$.  Precise
approximation results are provided in \cite{roueff-vonsachs-sansonnet2016} and
recalled in Section~\ref{sec:appr-results-locally} below. Presently, we only
need to introduce how to define this local first and second order statistical
structure.

We first introduce the \emph{local mean density} function $\locstat{m}_1(u)$ defined at each
absolute location $u$, as the mean
parameter of the stationary Hawkes process  $N(\cdot;u)$. By \cite[Eq.~(6.3.26)
in Example~6.3(c)]{daley-vere-jones-2003}, it is given by
\begin{equation}
  \label{eq:local-mean}
\locstat{m}_1(u)=\frac{\locstat{\lambda}_c(u)}{1-\int \locstat{p}(\cdot;u)}\;.
\end{equation}  
A convenient way to describe the covariance structure of a second order
stationary point process $N$ on $\rset$ is to rely on a spectral
representation, the Bartlett spectrum, which is defined as the (unique)
non-negative measure $\Gamma$ on the Borel sets such that, for any bounded and
compactly supported function $f$ on $\rset$, (see \cite[Proposition~8.2.I]{daley-vere-jones-2003})
$$
  \mathrm{Var}\big(N(f)\big)=\Gamma(|\fourierf|^2)=\int \left|\fourierf(\omega)\right|^2\;\Gamma(\rmd\omega)\;,
$$
where $\fourierf$ denotes the Fourier transform of $f$,
$$
\fourierf(\omega)=\int f(t)\;\rme^{-\rmi t \omega}\;\rmd t \;.
$$
For the stationary Hawkes processes $N(\cdot;u)$, the Bartlett spectrum admits a density
given by (see \cite[Example~8.2(e)]{daley-vere-jones-2003})
\begin{align}
 \locstat{\gamma}(u;\omega)
&=\ \frac{\locstat{m}_1(u)}{2\pi}\,\left|1-\locstat{\fourierp}(\omega;u)\ \right|^{-2}\, ,
  \label{eq:local-bartlett-spectral-density}
\end{align}
where
$$
\locstat{\fourierp}(\omega;u)= \int \locstat{p}(t;u)\;\rme^{-\rmi t \omega}\;\rmd t \;.
$$
Analagous to the first order structure, we call $\locstat{\gamma}(u;\omega)$ the \emph{local
  Bartlett spectrum density} at frequency $\omega$ and absolute location $u$. 
This local Bartlett spectrum density plays a role similar to that of the local
spectral density $f(u,\lambda)$ introduced in \cite[Page~142]{dahlhaus96} for
locally stationary time series.

\subsection{Estimators}
\label{sec:moment-estimation}

As our approach is local in time and frequency, we rely on two kernels $\ktime$
and $\kfreq$ which are required to be compactly supported 
(see Remark~\ref{rem:finite-set-of
  observations} below). More precisely, we have the following assumptions.
\begin{assumption}{K}
\item \label{ass:ktime-comp-support}
Let  $\ktime$ be  a $\rset\to\rset_+$ bounded function with compact support
such that $\int \ktime = \ellnorm{\ktime} =1$.
\item \label{ass:kfreq-comp-support} Let $\kfreq$  be  a $\rset\to\cset$ bounded function with compact support
  such that $\ellnorm{\kfreq}[2] = \sqrt{2\pi}$.
\end{assumption}
To localize in time let $\btime >0$ be a given time bandwidth and define
$\kbtime$ and $\ktbtime$ to be the corresponding kernels in absolute time $u$
and real time $t$, namely,
\begin{equation}
  \label{eq:ktbtime-def}
\kbtime(u):= \btime^{-1}\,\ktime(u/\btime)\quad\text{and}\quad
\ktbtime(t):= T^{-1}\,\kbtime(t/T)= (T\btime)^{-1}\,\ktime(t/(T\btime)) \;. 
\end{equation} 
Let now $u_0$ be a fixed absolute time.
For estimating the local mean density $\locstat{m}_1(u_0)$ given by
equation~(\ref{eq:local-mean}), approximating the mean density function
$t\mapsto m_{1T}(t)$ of $N_T$ locally in the neighborhood of $T u_0$ by
$\locstat{m}_1(u_0)$ we have, for $\btime$ small,
$$
\locstat{m}_1(u_0)\approx \int
\ktbtime(t-Tu_0)m_{1T}(t)\,\rmd t\;,
$$
where we used $\int\ktbtime=1$ and that the support of
$t\mapsto\ktbtime(t-Tu_0)$ essentially lives in the neighborhood of $T u_0$
for $T\btime$ small. Since the right-hand side of this approximation is
$\esp\left[N_T(S^{-Tu_0}\ktbtime)\right]$, this suggests the following
estimator of $\locstat{m}_1(u_0)$,
\begin{equation}\label{eq:intensity_est}
\widehat{m}_{\btime}(u_0)\ :=  \ N_T(S^{-Tu_0}\ktbtime)\ = \int
\ktbtime(t-Tu_0)\ N_T(\rmd t)\ .
\end{equation}

For estimation of the second order structure, i.e. the local Bartlett spectral
density $\locstat{\gamma}(u_0;\omega_0)$ for some given point
$(u_0, \omega_0)$ of the time-frequency plane, we need also to localise in
frequency by a kernel which will be given by the (squared) Fourier transform
$|\fkfreq|^2$ of the kernel $\kfreq$. Then for a given frequency bandwidth
$\bfreq >0$, we are looking for an estimator of the auxiliary quantity
\begin{equation}
  \label{eq:regularized}
  \locstat{\gamma}_{\bfreq}(u_0;\omega_0):=\int \frac{1}{\bfreq}
\left|\fkfreq\left(\frac{\omega-\omega_0}{\bfreq}\right)\right|^2\;\locstat{\gamma}(\omega;u_0)\rmd\omega,
\end{equation}
which in turn, as $\bfreq\to0$, is an approximation of the density
$\locstat{\gamma}(u_0;\omega_0)$, since~\ref{ass:kfreq-comp-support} implies
$\ellnorm{\fkfreq}[2] = 1$.  The rate of approximation (i.e. the
``bias in frequency direction'' of the following estimator) is established in
Theorem~\ref{thm:total-bias}, equation~(\ref{eq:freq_bias}), below.  Let us now
set $\kbofreq(t)=\bfreq^{1/2} \rme^{\rmi\omega_0t}\kfreq(\bfreq t)$ such that
the squared modulus of its Fourier transform writes as
$$
|\fkbofreq(\omega)|^2=
\frac{1}{\bfreq} \left|\fkfreq(\frac{\omega-\omega_0}{\bfreq})\right|^2\;.
$$
Using that $\locstat{\gamma}(\omega;u_0)\rmd\omega$ is the Bartlett spectrum of
$N(\cdot;u_0)$ as recalled in Section~\ref{sec:local-mean-density-bartlett}, we
can thus rewrite~(\ref{eq:regularized}) as 
\begin{equation}
  \label{eq:regularized-new}
  \locstat{\gamma}_{\bfreq}(u_0;\omega_0)=
\var\left(N(\kbofreq;u_0)\right) \;.
\end{equation}
Since this variance is an approximation of
$\var\left(N_T(S^{-Tu_0}\kbofreq)\right)$, where $\kbofreq$ is shifted to be localized around $T\,u_0$,
we finally estimate $\locstat{\gamma}_{\bfreq}(u_0;\omega_0)$ by the
following moment estimator:
\begin{equation}
  \label{eq:kernel-est}
\widehat{\gamma}_{\bfreq,\btime}(u_0;\omega_0) :=
\widehat{E}\left[|N_T(S^{-Tu_0}\kbofreq)|^2;\kbtime\right]
-\left|\widehat{E}\left[N_T(S^{-Tu_0}\kbofreq);\kbtime\right] \right|^2 ,
\end{equation}
where for the test function $f=S^{-Tu_0}\kbofreq$ and taking $\rho(x)=x$ and $\rho(x)=|x|^2$ successively,
we have built estimators of $\esp[\rho(N_T(f))]$ based on the empirical
observations of $N_T$ and defined by
\begin{align}
  \label{eq:general_moment_est}
\widehat{E}[\rho\left( N_T(f)\right);\kbtime] :=
 \int \rho\left( N_T(f(\cdot-t))\right)\ \ktbtime(t)\;\rmd t\\
\nonumber
=
\frac1T \int \rho\left( \sum_k f(t_{k,T}-t)\right)\ \kbtime(t/T)\;\rmd t\;.
\end{align}
Note that in~(\ref{eq:kernel-est}) the dependence of the estimator on
$u_0,\omega_0$ appears in the choice of $f=S^{-Tu_0}\kbofreq$.
By an obvious change of variable, this would be
equivalent to let the kernel $\kbofreq$ unshifted in time, hence take $f=\kbofreq$, and instead shift
$\ktbtime(t)$ into $\ktbtime(t-Tu_0)$, or, in absolute time, shift $\kbtime(u)$
into $\kbtime(u-u_0)$.
\begin{remark}\label{rem:finite-set-of observations}
  In practice $N_T$ is observed over a finite interval. In order to
  have estimators $\widehat{m}_{\btime}(u_0)$ and
  $\widehat{\gamma}_{\bfreq,\btime}(u_0;\omega_0)$ in~(\ref{eq:intensity_est})
  and~(\ref{eq:kernel-est}) that only use observations within this interval,
  the supports of $\ktime$ and $\kfreq$ must be bounded and some restriction
  imposed on $\btime$, $\bfreq$ and $T$. Suppose for instance that $N_T$ is
  observed on $[0,T]$ (mimicking the usual convention for locally stationary
  time series of \cite{dahlhaus95}). The local mean density and Bartlett
  spectrum can then be estimated at corresponding absolute times $u_0\in(0,1)$ and
  the restrictions on $\btime$, $\bfreq$ and $T$ read as follows.
  In~(\ref{eq:intensity_est}), we must have
$u_0+\btime\supp(\ktime)\subseteq[0,1]$,
and in~(\ref{eq:kernel-est}), we must have
$u_0+\btime\supp(\ktime)+(T\bfreq)^{-1}\supp(\kfreq)\subseteq[0,1]$.
These two support conditions are always satisfied, eventually as $T\to\infty$,
provided that the kernels $\ktime$ and $\kfreq$ are compactly supported and
that $\btime\to0$ and $T\bfreq\to\infty$.
\end{remark}
In the sequel we will show that this is a sensible estimator of
$\locstat{\gamma}_{\bfreq}(u_0;\omega_0)$ sharing the usual properties of a nonparametric
estimator constructed via kernel-smoothing over time and frequency: for
sufficiently small bandwidths $\btime$ in time and $\bfreq$ in frequency this
estimator becomes well localised around $(u_0;\omega_0)$. 

The main results stated hereafter provide asymptotic expansions
of its bias and variance behaviour, 
leading to consistency of this estimator under some asymptotic condition for
$\btime$ and $\bfreq$ as $T\to\infty$.
\section{Bias and variance bounds}
\label{sec:asymp_est_theory}

\subsection{Main assumptions}
\label{sec:LS-Assumptions}

The first assumption is akin but stronger than
condition~(\ref{eq:locally-stat-hawk-finite-intensity}) above, being in fact
equal to assumption (LS-1) of \cite{roueff-vonsachs-sansonnet2016}. It guarantees
that, for all $T\geq1$, the locally stationary Hawkes process $N_{T}$ admits a
(causal) local fertility function $s\mapsto\locstat{p}(s;u)$ which is not only
uniformly bounded, but has an exponentially decaying memory (as a function in
the first argument, uniformly with respect to its second argument).
\begin{assumption}{LS}
\item \label{ass:locally-stat-hawk-finite-expbounds}
Assume that 
\begin{equation}
\ellnorm{\locstat{\lambda}_c}[\infty]<\infty \; .
\end{equation}
Assume moreover that for all $u\in\Rset$, $\ \locstat{p}(\cdot;u)\ $ is supported on $\rset_+$ and that
there exists a $d>0$ such that $\zetaexpls{d}<1$ and $\zetaexpls[\infty]{d}<\infty$ where 
\begin{equation}\label{eq:exp-decay-l1}
\zetaexpls{d}:=\sup_{u\in\Rset}\ellexpnorm{\locstat{p}(\cdot;u)}[1][d]
=\sup_{u\in\Rset} \int \locstat{p}(s;u)\; \rme^{d|s|} \rmd s
\end{equation}
and
\begin{equation}\label{eq:exp-decay-linfty}
  \zetaexpls[\infty]{d}:=\sup_{u\in\Rset}\ellexpnorm{\locstat{p}(\cdot;u)}[\infty][d]
  =\sup_{u\in\Rset} \esssup_{s\in\Rset}\left\{|\locstat{p}(s;u)|\rme^{d|s|}\right\}
\;.
\end{equation}
\end{assumption}
All the examples considered in \cite{roueff-vonsachs-sansonnet2016} satisfy this
condition.
It is also satisfied if  the local fertility functions have a (uniformly) bounded compact
support (cf. \cite{hansen-reynaud-rivoirard2015} in the stationary case).
\begin{assumption}{LS}
\item   \label{ass:locally-stat-hawk-center-intensity-lip}
 Assume that, for some $\beta \in (0,1]$,
  \begin{align*}
\holderlsc:=\sup_{u\neq
  v}\frac{|\locstat{\lambda}_c(v)-\locstat{\lambda}_c(u)|}{|v-u|^\beta}<\infty\; .
\end{align*}
\item   \label{ass:locally-stat-hawk-unif-lip} Assume that, for some $\beta \in (0,1]$, $\ellnorm{\holderls}<\infty$, where
$$
\holderls (r) := \sup_{u\neq v}\frac{\left|\locstat{p}(r;v)-\locstat{p}(r;u)\right|}{|v-u|^\beta} \;.
$$
\end{assumption}
Assumptions \ref{ass:locally-stat-hawk-center-intensity-lip}
and~\ref{ass:locally-stat-hawk-unif-lip} can be interpreted as smoothness
conditions respectively on $\locstat{\lambda}_c$ and on
$\locstat{p}(\cdot;\cdot)$ with respect to its second argument.  Note also that
Assumptions~(\ref{eq:exp-decay-l1}) and~(\ref{eq:exp-decay-linfty}) imply in
particular Assumption (LS-4) of \cite{roueff-vonsachs-sansonnet2016} which we
recall here to be
\begin{align} \label{eq:assump-thm-log-laplace-zeta}
   \zetals[\infty]:=\zetaexpls[\infty]{0}=\sup_{u\in\Rset}\ellnorm{\locstat{p}(.;u)}[\infty]<\infty\;,\\
\label{eq:assump-thm-log-laplace-pSbeta}
\zetabet :=\sup_{u\in\Rset} \ellbetanorm{\locstat{p}(\cdot;u)}<\infty \;.
\end{align}
This can be seen simply by noting that $\zetals:=\zetaexpls{0}\leq\zetaexpls{d}$ and
$ \zetals[\infty]\leq \zetaexpls[\infty]{d}$ for all
$d\geq0$, with equality for $d=0$.  Similarly, $\zetaexpls{d}<\infty$ for some $d
>0$ implies $\zetabet<\infty$ for all $\beta>0$.

Hereafter all the given bounds are uniform upper bounds in the sense that they
hold uniformly over parameters $\locstat{\lambda}_c$ and $\locstat{p}$
satisfying the set of conditions
(\ref{eq:locally-stat-hawk-finite-intensity}),\ref{ass:locally-stat-hawk-center-intensity-lip},
\ref{ass:locally-stat-hawk-unif-lip},
and~(\ref{eq:assump-thm-log-laplace-zeta})
and~(\ref{eq:assump-thm-log-laplace-pSbeta}), as in
Theorems~\ref{thm:est_loc_mean_intensity}, or the more restrictive set of
conditions~\ref{ass:locally-stat-hawk-finite-expbounds},
\ref{ass:locally-stat-hawk-center-intensity-lip} and
\ref{ass:locally-stat-hawk-unif-lip}, as in Theorem~\ref{thm:total-bias}
and~\ref{thm:variance_loc_Bartlett_est}. More specifically, we use the
following conventions all along the paper. 
\begin{convention}[Symbol $\lesssim$]\label{conv:lesssim}
For two nonnegative sequences $a_T$ and
$b_T$ indexed by $T\geq1$, possibly depending on parameters
$\locstat{\lambda}_c$, $\locstat{p}$, $\btime$ and $\bfreq$, we use the
notation $a_T \lesssim b_T$ to denote that there exists a constant $C$ such
that, for all $\btime,\bfreq$ and $T$ satisfying certain conditions
$\mathcal{C}(\btime,\bfreq,T)$, we have $a_T \leq C\ b_T$ with $C$ only
depending on non-asymptotic quantities and constants such as $d$,
$\beta, \zetaexpls{d},\zetabet, \zetaexpls[\infty]{d}, \holderlsc,
\ellnorm{\holderls}, \ellnorm{\locstat{\lambda}_c}[\infty]$ and the two kernel
functions $\ktime$ and $\kfreq$.   
\end{convention}
The conditions
$\mathcal{C}(\btime,\bfreq,T)$ will be intersections of the following ones~:
  \begin{align}
    \label{eq:conditions-T-and-co-bias-density}
&T\geq 1\quad\text{and}\quad\btime\in(0,1]\;,\\    
    \label{eq:conditions-T-and-co-timebias-bartlett}
&\btime,\bfreq\in(0,1]\quad\text{and}\quad T\btime\bfreq\geq 1\;,    \\
        \label{eq:conditions-T-and-co-timebias-bartlett-expoterms}
&\btime\ln(T)\leq 1\;.
  \end{align}
  \begin{convention}[Constants $A_1,A_2$]\label{conv:As}
    We use $A_1,A_2$ to denote positive constants that
    can change from one expression to another but always satisfy $A_1^{-1}\lesssim1$ and
    $A_2\lesssim1$, using Convention~\ref{conv:lesssim}. In other words $A_1$
    and $A_2$ are positive constants which can be bounded from below and from
    above, respectively, using the constants appearing in the assumptions and
    the chosen kernels $\ktime$ and $\kfreq$.
  \end{convention}
  Convention~\ref{conv:As} will be useful to treat exponential terms in a
  simplified way, that is, without considering unnecessary constants; for
  instance, we can write $(\rme^{-A_1T})^2\leq\rme^{-A_1T}$ replacing $2A_1$ by
  $A_1$ in the second expression without affecting the property
  $A_1^{-1}\lesssim1$ required on $A_1$.
\subsection{Main results}
\label{sec:est_loc_mean_dens}
We can now state the main results of this contribution, whose proofs can be
found in Appendix~\ref{sec:proofs}.
For the bias and variance of the local mean density estimator
$\widehat{m}_{\btime}(u_0)$ we establish the following
result.
\begin{theorem}\label{thm:est_loc_mean_intensity}
Let the kernel $\ktime$ satisfy~\ref{ass:ktime-comp-support}. Assume conditions
  (\ref{eq:locally-stat-hawk-finite-intensity}),\ref{ass:locally-stat-hawk-center-intensity-lip},
  \ref{ass:locally-stat-hawk-unif-lip},
  and~(\ref{eq:assump-thm-log-laplace-zeta})
  and~(\ref{eq:assump-thm-log-laplace-pSbeta}) to hold. Then, for
  $\btime$ and $T$ satisfying~(\ref{eq:conditions-T-and-co-bias-density}), the
  bias of the local density estimator satisfies, for all $u_0\in\Rset$,
  \begin{equation}
  \label{eq:bias_loc_mean_dens_est}
  \left| \esp[ \widehat{m}_{\btime}(u_0)]- \locstat{m}_1(u_0)\right| \ \lesssim
  \ \btime^\beta + T^{-\beta} \;.
  \end{equation}
If moreover \ref{ass:locally-stat-hawk-finite-expbounds} holds, its variance satisfies
    \begin{equation}
  \label{eq:var_loc_mean_dens_est}
  \var\left(\widehat{m}_{\btime}(u_0)\right) \ \lesssim (T\btime)^{-1}\ .
  \end{equation}
\end{theorem}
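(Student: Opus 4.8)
The plan is to prove the two bounds~(\ref{eq:bias_loc_mean_dens_est}) and~(\ref{eq:var_loc_mean_dens_est}) separately, both starting from $\widehat{m}_{\btime}(u_0)=N_T(S^{-Tu_0}\ktbtime)$ and Campbell's formula $\esp[N_T(g)]=\int g\,m_{1T}$, where $m_{1T}$ is the mean density of $N_T$, which under~(\ref{eq:locally-stat-hawk-finite-intensity}) is uniformly bounded by $\ellnorm{\locstat{\lambda}_c}[\infty]/(1-\zetals)$.

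\textbf{Bias.} Since $\int\ktbtime=1$, I would write
\[
\esp[\widehat{m}_{\btime}(u_0)]-\locstat{m}_1(u_0)=\int\ktbtime(t-Tu_0)\bigl(m_{1T}(t)-\locstat{m}_1(u_0)\bigr)\,\rmd t
\]
and split $m_{1T}(t)-\locstat{m}_1(u_0)=\bigl(m_{1T}(t)-\locstat{m}_1(t/T)\bigr)+\bigl(\locstat{m}_1(t/T)-\locstat{m}_1(u_0)\bigr)$. For the second (smoothing) term I would first check that $\locstat{m}_1$ is $\beta$-H\"older: by~(\ref{eq:local-mean}) it is the ratio of $\locstat{\lambda}_c$ --- which is $\beta$-H\"older by Assumption~\ref{ass:locally-stat-hawk-center-intensity-lip} --- to $u\mapsto1-\int\locstat{p}(\cdot;u)$, which is $\beta$-H\"older by Assumption~\ref{ass:locally-stat-hawk-unif-lip} and bounded below by $1-\zetals>0$; hence on the support of $t\mapsto\ktbtime(t-Tu_0)$, where $|t/T-u_0|\lesssim\btime$, this term is $\lesssim\btime^\beta$. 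For the first (locally stationary approximation) term I would run a fixed-point argument on the renewal identity: taking expectations in the conditional-intensity formula for $N_T$ gives $m_{1T}(t)=\locstat{\lambda}_c(t/T)+\int_0^\infty\locstat{p}(r;t/T)\,m_{1T}(t-r)\,\rmd r$, while the constant $\locstat{m}_1(t/T)$ solves the same equation with $m_{1T}(t-r)$ replaced by $\locstat{m}_1(t/T)$; subtracting, $e_T:=m_{1T}-\locstat{m}_1(\cdot/T)$ satisfies
\begin{align*}
|e_T(t)|&\le\int_0^\infty\locstat{p}(r;t/T)\,|e_T(t-r)|\,\rmd r\\
&\quad+\int_0^\infty\locstat{p}(r;t/T)\,\bigl|\locstat{m}_1\bigl((t-r)/T\bigr)-\locstat{m}_1(t/T)\bigr|\,\rmd r\,,
\end{align*}
and the last integral is $\lesssim T^{-\beta}\zetabet$ by the H\"older bound on $\locstat{m}_1$ and~(\ref{eq:assump-thm-log-laplace-pSbeta}). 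Since $\ellnorm{e_T}[\infty]<\infty$ and $\sup_u\int\locstat{p}(\cdot;u)=\zetals<1$, taking the supremum over $t$ gives $\ellnorm{e_T}[\infty]\lesssim(1-\zetals)^{-1}T^{-\beta}\lesssim T^{-\beta}$ (this is also the content of the approximation results recalled in Section~\ref{sec:appr-results-locally}). Adding the two estimates and using $\int|\ktbtime(\cdot-Tu_0)|=1$ yields~(\ref{eq:bias_loc_mean_dens_est}).

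\textbf{Variance.} With $f:=S^{-Tu_0}\ktbtime\ge0$, I would decompose the second moment measure of $N_T$ into its diagonal and off-diagonal parts,
\[
\var\bigl(\widehat{m}_{\btime}(u_0)\bigr)=\int f(t)^2\,m_{1T}(t)\,\rmd t+\iint f(t)f(s)\,c_{2T}(t,s)\,\rmd t\,\rmd s,
\]
where $c_{2T}$ denotes the density of the (off-diagonal) covariance measure of $N_T$. The first term is $\le\ellnorm{m_{1T}}[\infty]\,\ellnorm{f}[2]^2\lesssim(T\btime)^{-1}$, using the uniform bound on $m_{1T}$ together with $\ellnorm{f}[2]^2=\int\ktbtime^2=(T\btime)^{-1}\ellnorm{\ktime}[2]^2$. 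For the second term, substituting $s=t+\tau$, bounding $f(t+\tau)\le\ellnorm{f}[\infty]=(T\btime)^{-1}\ellnorm{\ktime}[\infty]$ and using $\ellnorm{f}[1]=\int\ktbtime=1$, I get
\[
\Bigl|\iint f(t)f(s)\,c_{2T}(t,s)\,\rmd t\,\rmd s\Bigr|\le(T\btime)^{-1}\ellnorm{\ktime}[\infty]\,\sup_{t\in\rset}\int|c_{2T}(t,t+\tau)|\,\rmd\tau\,,
\]
so that it remains only to prove $\sup_t\int|c_{2T}(t,t+\tau)|\,\rmd\tau\lesssim1$, uniformly in $T$. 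This uniform integrability of the covariance density is precisely where the stronger Assumption~\ref{ass:locally-stat-hawk-finite-expbounds} is used: it transfers the exponential decay of $\locstat{p}(\cdot;u)$ to the second-order structure, and it is exactly the kind of estimate delivered by the moment bounds for non-stationary Hawkes processes of Section~\ref{sec:dev-bounds}. Combining the two contributions gives~(\ref{eq:var_loc_mean_dens_est}).

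\textbf{Where the difficulty lies.} The genuinely non-trivial step is the uniform-in-time control of the second-order covariance density $c_{2T}$ of the non-stationary Hawkes process; this is what forces Assumption~\ref{ass:locally-stat-hawk-finite-expbounds} in the variance part (but not in the bias part), and it rests on the moment-bound machinery of Section~\ref{sec:dev-bounds}. By contrast, the locally stationary approximation error $\ellnorm{m_{1T}-\locstat{m}_1(\cdot/T)}[\infty]\lesssim T^{-\beta}$ is comparatively routine once the renewal fixed-point argument is in place.
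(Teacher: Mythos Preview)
Your bias argument is correct but proceeds differently from the paper. The paper does not split $m_{1T}(t)-\locstat{m}_1(u_0)$ into a smoothing part and an approximation part; it simply applies the first-moment approximation~(\ref{eq:mean-approx-hawkes-local}) of Theorem~\ref{thm:mean-var-approx} with $g=\ktbtime$, which yields $|\esp[\widehat m_{\btime}(u_0)]-\locstat{m}_1(u_0)|\lesssim T^{-\beta}(\ellnorm{\ktbtime}+\ellbetanorm{\ktbtime})\lesssim T^{-\beta}+\btime^\beta$ in one line via~(\ref{eq:W-onenorm}) and~(\ref{eq:W-betanorm}). Your renewal fixed-point derivation of $\|m_{1T}-\locstat{m}_1(\cdot/T)\|_\infty\lesssim T^{-\beta}$ is essentially a hands-on re-proof of the special case of that theorem needed here; it makes the argument self-contained at the cost of duplicating work already packaged in Theorem~\ref{thm:mean-var-approx}.

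For the variance your route again differs, and here there is a soft spot. The paper does not pass through the covariance density $c_{2T}$ at all: it applies the Burkh\"older-type inequality of Proposition~\ref{prop:burkh-ineq} with $p=2$ and $h=S^{-Tu_0}\ktbtime$, so that $\ellnorm{h}[\infty]=(T\btime)^{-1}\ellnorm{\ktime}[\infty]$ and the support length satisfies $n\lesssim T\btime$, giving $\var(\widehat m_{\btime}(u_0))=\|\overline{\overline N}_T(h)\|_2^2\lesssim\ellnorm{h}[\infty]^2\,n\lesssim(T\btime)^{-1}$ directly. Your approach instead needs $\sup_t\int|c_{2T}(t,t+\tau)|\,\rmd\tau\lesssim1$, but the results of Section~\ref{sec:dev-bounds} are deviation and conditional-expectation bounds on $N(h)$ (Propositions~\ref{prop:uniform-moment-bound}--\ref{prop:burkh-ineq}, Corollary~\ref{cor:cov-squares}), not pointwise or integrated estimates on the covariance density; to close your argument you would still have to establish existence of $c_{2T}$ in the non-stationary case and extract the uniform $L^1$-in-$\tau$ bound, e.g.\ from the cluster representation together with the exponential cluster-size control of Proposition~\ref{prop:Delta-exp-moment}. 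That is feasible but is additional work not delivered by Section~\ref{sec:dev-bounds} as written, whereas Proposition~\ref{prop:burkh-ineq} settles the matter in one line.
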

Hence, $\widehat{m}_{\btime}(u_0)$ is shown to be a (mean-square) consistent
estimator of $\locstat{m}_1(u_0)$, and, optimizing the bias and variance
bounds, we get the ``usual''
mean-square error rate $T^{-\frac{2\beta}{2\beta+1}}$ for nonparametric curve estimation with an additive noise structure, achieved for a bandwidth $\btime
\sim T^{-\frac{1}{2\beta+1}}$.

We now treat the bias of the estimator
$\widehat{\gamma}_{\bfreq,\btime}(u_0;\omega_0)$ which can be decomposed as the
sum of 1) a bias in the time direction, namely, $\esp\
\widehat{\gamma}_{\bfreq,\btime}(u_0;\omega_0) -
\locstat{\gamma}_{\bfreq}(u_0;\omega_0)$ and 2) a bias in the frequency
direction, namely, $\locstat{\gamma}_{\bfreq}(u_0;\omega_0) -
\locstat{\gamma}(u_0;\omega_0)$.
\begin{theorem}
  \label{thm:total-bias}
  Let the kernels $\ktime$ and $\kfreq$ satisfy~\ref{ass:ktime-comp-support}
  and~\ref{ass:kfreq-comp-support}.  Assume
  conditions~\ref{ass:locally-stat-hawk-finite-expbounds},
  \ref{ass:locally-stat-hawk-center-intensity-lip}, and
  \ref{ass:locally-stat-hawk-unif-lip} to hold.
  Then, for all $\btime$, $\bfreq$ and $T$ satisfying   (\ref{eq:conditions-T-and-co-timebias-bartlett})
  and~(\ref{eq:conditions-T-and-co-timebias-bartlett-expoterms})
  and for all $u_0,\ \omega_0 \in \Rset$, we have
  \begin{equation}\label{eq:total_bias}
\left|\esp[ \widehat{\gamma}_{\bfreq,\btime}(u_0;\omega_0) ]-
  \locstat{\gamma}_{\bfreq}(u_0;\omega_0) \right|
 \lesssim\  \ \btime^\beta + \btime^{2\beta} \bfreq^{-1} \ +\ (T\btime\bfreq)^{-1} \ .
\end{equation}
If moreover the squared modulus $|\fkfreq(\omega)|^2$ of the Fourier transform
of the kernel $\kfreq$ satisfies
\begin{equation}
  \label{eq:addtional-cond-Kkernek} \int
\omega^2|\fkfreq(\omega)|^2\rmd\omega<\infty\quad\text{and}\quad\int
\omega|\fkfreq(\omega)|^2\rmd\omega=0\;,
\end{equation}
the ``bias in frequency direction'' fulfills for $\bfreq\in(0,1]$,
  \begin{equation}\label{eq:freq_bias}
 \left| \locstat{\gamma}_{\bfreq}(u_0;\omega_0) -   \locstat{\gamma}(u_0;\omega_0) \right| \lesssim \bfreq^2\ .
 \end{equation}
\end{theorem}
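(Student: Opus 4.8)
The plan is to establish the three pieces separately, with the core of the work being the ``time bias'' term $\esp[\widehat{\gamma}_{\bfreq,\btime}(u_0;\omega_0)] - \locstat{\gamma}_{\bfreq}(u_0;\omega_0)$, while the ``frequency bias'' bound \eqref{eq:freq_bias} is a relatively soft Taylor-expansion argument. Writing $f = S^{-Tu_0}\kbofreq$, the definition \eqref{eq:kernel-est} together with \eqref{eq:general_moment_est} gives
\[
\esp[\widehat{\gamma}_{\bfreq,\btime}(u_0;\omega_0)] = \int \var\big(N_T(f(\cdot - t))\big)\,\ktbtime(t)\,\rmd t + \int \cov\text{-type correction terms involving }\esp[N_T(f(\cdot-t))]\,\ktbtime(t)\,\rmd t,
\]
so the first step is to write $\esp[|N_T(f(\cdot-t))|^2] - |\esp[N_T(f(\cdot-t))]|^2 = \var(N_T(f(\cdot-t)))$ exactly, and then reduce matters to comparing, for $t$ in the (small, of width $O(T\btime)$) support of $\ktbtime(\cdot - Tu_0)$, the nonstationary variance $\var(N_T(f(\cdot-t)))$ with the stationary one $\var(N(\kbofreq;u_0)) = \locstat{\gamma}_{\bfreq}(u_0;\omega_0)$ from \eqref{eq:regularized-new}. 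This is where the approximation results of \cite{roueff-vonsachs-sansonnet2016} recalled in Section~\ref{sec:appr-results-locally} enter: they should give, under \ref{ass:locally-stat-hawk-finite-expbounds}–\ref{ass:locally-stat-hawk-unif-lip}, a bound on $|\var(N_T(f(\cdot-t))) - \var(N(S^{t/T}\kbofreq\text{-type object};t/T))|$ that is controlled by the $\beta$-Hölder moduli $\holderlsc$, $\ellnorm{\holderls}$ times appropriate norms of the test function, plus an exponentially small boundary term. Since $\kbofreq(s) = \bfreq^{1/2}\rme^{\rmi\omega_0 s}\kfreq(\bfreq s)$ has $L^1$ and $L^2$ norms that scale like $\bfreq^{-1/2}$ and $1$ respectively, and since $\kfreq$ is compactly supported, one tracks how these norms propagate through the variance bound; the factor $\bfreq^{-1}$ in the middle term $\btime^{2\beta}\bfreq^{-1}$ is exactly what comes out of a squared $\bfreq^{-1/2}$ scaling hitting a second-order ($\btime^{2\beta}$) remainder, while the cruder $\btime^\beta$ term comes from the first-order Hölder displacement of the absolute location $t/T$ away from $u_0$ across the window.

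Concretely, for the time-bias estimate I would: (i) substitute $t = Tu_0 + T\btime v$ so the integral becomes $\int \big[\var(N_T(f(\cdot - Tu_0 - T\btime v))) - \locstat{\gamma}_{\bfreq}(u_0;\omega_0)\big]\,\kbtime\text{-rescaled}(v)\,\rmd v$ with $v$ ranging over $\supp(\ktime)$; (ii) insert the intermediate quantity $\var(N(\kbofreq; u_0 + \btime v))$ — the stationary Bartlett variance at the shifted absolute location — using the explicit formula \eqref{eq:local-bartlett-spectral-density}; (iii) bound $|\var(N_T(\cdots)) - \var(N(\kbofreq;u_0+\btime v))|$ using the moment-control machinery of Section~\ref{sec:dev-bounds}/\cite{roueff-vonsachs-sansonnet2016}, which should produce a bound of the form $(T\btime\bfreq)^{-1}$ (an ``edge-of-window'' term coming from the difference between scaling arguments by $T$ versus treating them as frozen, boosted by the $\bfreq^{-1/2}$ test-function norm and the $T^{-1}$ from how fast parameters vary — the condition \eqref{eq:conditions-T-and-co-timebias-bartlett-expoterms}, $\btime\ln T \le 1$, is precisely there to absorb an $\rme^{A_1 T\btime}$-type factor against an $\rme^{-A_1 T}$ decay so that boundary contributions stay $\lesssim (T\btime\bfreq)^{-1}$); and (iv) bound $|\var(N(\kbofreq;u_0+\btime v)) - \locstat{\gamma}_{\bfreq}(u_0;\omega_0)|$ by a Taylor expansion in the absolute location. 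For (iv) one uses that $u \mapsto \locstat{\lambda}_c(u)$ and $u\mapsto \locstat{p}(r;u)$ are $\beta$-Hölder, hence so is $u\mapsto \locstat{\gamma}(\omega;u)$ uniformly in $\omega$ (by \eqref{eq:local-bartlett-spectral-density}, using $\zetals<1$ to keep $|1-\locstat{\fourierp}|^{-2}$ bounded), giving $|\locstat{\gamma}(\omega;u_0+\btime v) - \locstat{\gamma}(\omega;u_0)| \lesssim (\btime|v|)^\beta$; integrating against $\bfreq^{-1}|\fkfreq((\omega-\omega_0)/\bfreq)|^2$, whose mass is $1$, and against the compactly supported $\ktime(v)$, yields the $\btime^\beta$ contribution. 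The term $\btime^{2\beta}\bfreq^{-1}$ is more delicate: it should arise from a second-order refinement where the first-order Hölder term integrates to something small by symmetry/cancellation of the kernel, leaving a second-order $\btime^{2\beta}$ remainder that, because it is measured through the \emph{squared} frequency-kernel whose $L^2$-normalization forces an $L^1$-mass blow-up of order $\bfreq^{-1}$ when one does not use the probability-measure normalization but rather a pointwise bound, contributes $\btime^{2\beta}\bfreq^{-1}$.

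For \eqref{eq:freq_bias}, I would write $\locstat{\gamma}_{\bfreq}(u_0;\omega_0) - \locstat{\gamma}(u_0;\omega_0) = \int \bfreq^{-1}|\fkfreq((\omega-\omega_0)/\bfreq)|^2\big(\locstat{\gamma}(\omega;u_0) - \locstat{\gamma}(\omega_0;u_0)\big)\,\rmd\omega$, substitute $\omega = \omega_0 + \bfreq\xi$ to get $\int |\fkfreq(\xi)|^2\big(\locstat{\gamma}(\omega_0+\bfreq\xi;u_0) - \locstat{\gamma}(\omega_0;u_0)\big)\,\rmd\xi$, and Taylor-expand $\omega\mapsto\locstat{\gamma}(\omega;u_0)$ to second order at $\omega_0$. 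The zeroth-order term cancels against the normalization $\ellnorm{\fkfreq}[2]^2 = 1$; the first-order term vanishes by the assumed $\int\omega|\fkfreq(\omega)|^2\,\rmd\omega = 0$ in \eqref{eq:addtional-cond-Kkernek}; and the second-order term is $\lesssim \bfreq^2\int\xi^2|\fkfreq(\xi)|^2\,\rmd\xi \lesssim \bfreq^2$, the integral being finite by the first part of \eqref{eq:addtional-cond-Kkernek}, provided $\locstat{\gamma}(\cdot;u_0)$ is $C^2$ with uniformly bounded second derivative in $\omega$. That smoothness in $\omega$ is automatic: $\locstat{\fourierp}(\omega;u_0) = \int\locstat{p}(t;u_0)\rme^{-\rmi t\omega}\,\rmd t$ is smooth in $\omega$ with derivatives controlled by the moments $\int |t|^k\locstat{p}(t;u_0)\,\rmd t$, which are finite and uniformly bounded under the exponential-decay assumption \ref{ass:locally-stat-hawk-finite-expbounds} (indeed $\zetaexpls{d}<\infty$ controls all polynomial moments), and $1-\locstat{\fourierp}$ stays bounded away from $0$ in modulus since $\zetals<1$; hence \eqref{eq:local-bartlett-spectral-density} gives a $C^2$ (in fact $C^\infty$) function of $\omega$ with the needed uniform bounds. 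The main obstacle is step (iii) of the time-bias argument — quantifying the discrepancy between the genuine nonstationary variance $\var(N_T(f(\cdot-t)))$ and its frozen-coefficient stationary surrogate, with explicit tracking of how the frequency-bandwidth-dependent norms of $\kbofreq$ and the exponential edge terms combine — which is exactly the kind of non-stationary moment control the paper develops in Section~\ref{sec:dev-bounds} and which makes the two ``cross'' terms $\btime^{2\beta}\bfreq^{-1}$ and $(T\btime\bfreq)^{-1}$ appear; everything else is bookkeeping with Hölder bounds, Taylor expansions, and the kernel normalizations.
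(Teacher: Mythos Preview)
Your treatment of the frequency bias~\eqref{eq:freq_bias} is essentially the paper's argument and is fine. The time-bias argument, however, has a genuine structural gap: you have not correctly accounted for the \emph{empirical centering} term in the estimator, and as a result your attribution of the three contributions $\btime^\beta$, $\btime^{2\beta}\bfreq^{-1}$, $(T\btime\bfreq)^{-1}$ is wrong.

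Recall from~\eqref{eq:kernel-est} that the estimator is $\widehat{E}[|N_T(f)|^2;\kbtime]-|\widehat{E}[N_T(f);\kbtime]|^2$ with $f=S^{-Tu_0}\kbofreq$. Taking expectations does \emph{not} give $\int\var(N_T(f(\cdot-t)))\ktbtime(t)\,\rmd t$ plus small corrections: the second (negated) term has expectation $\esp[|N_T(f\ast\ktbtime)|^2]=\var(N_T(f\ast\ktbtime))+|\esp[N_T(f\ast\ktbtime)]|^2$, and the variance piece here is \emph{not} small in its own right. The paper handles this via the approximate centering $\overline{N}_T(\cdot;u_0)$ of Section~\ref{sec:bias-appr-cent}, which rewrites the estimator as in~\eqref{eq:est_two_terms_decomp} and decomposes the bias as $(\rmI)+(\rmI\rmI)-(\rmI\rmI\rmI\mathrm{a})-(\rmI\rmI\rmI\mathrm{b})$. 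The term $(T\btime\bfreq)^{-1}$ is exactly $(\rmI\rmI\rmI\mathrm{a})=\var(N_T(f\ast\ktbtime))$, bounded via the $L^2$-variance estimate~\eqref{eq:bound-of-var-ell2}; it has nothing to do with your step~(iii). The term $\btime^{2\beta}\bfreq^{-1}$ comes from $(\rmI\rmI)$ and $(\rmI\rmI\rmI\mathrm{b})$, which are \emph{squared first-moment} errors $|\esp[\overline{N}_T(\cdots;u_0)]|^2$: by~\eqref{eq:mean-approx-hawkes-local} each such mean is $\lesssim T^{-\beta}\bfreq^{-1/2}(|t|^\beta+\bfreq^{-\beta})$, and squaring and integrating against $\ktbtime$ produces $\btime^{2\beta}\bfreq^{-1}$. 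Your proposed mechanism---``first-order H\"older term integrates to something small by symmetry/cancellation of the kernel, leaving a second-order $\btime^{2\beta}$ remainder''---cannot work, because no symmetry or vanishing-moment condition is imposed on $\ktime$ (only $\int\ktime=1$).

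Finally, the main variance-comparison term $(\rmI)=\int[\var(N_T(f(\cdot-t)))-\locstat{\gamma}_{\bfreq}(u_0;\omega_0)]\ktbtime(t)\,\rmd t$ is what gives $\btime^\beta$, but the paper obtains this directly from the new $L^2$-based approximation~\eqref{eq:new-var-approx-hawkes-local} applied with $g=\kbofreq(\cdot-t)$ and $u=u_0$ (not $u_0+\btime v$): the weighted norm $\ellbetanorm{g}[2][\beta]$ picks up a $|t|^\beta$, which after integration against $\ktbtime$ yields $(T\btime)^\beta$, and the prefactor $T^{-\beta}$ turns this into $\btime^\beta$. Your intermediate comparison at the shifted location $u_0+\btime v$ would also produce $\btime^\beta$ here, so that part could be salvaged, but it does not generate $(T\btime\bfreq)^{-1}$ as you claim. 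In short, you need the approximate-centering decomposition of Section~\ref{sec:bias-appr-cent} to see where the two ``cross'' terms actually originate.
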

\begin{remark}
  Condition~(\ref{eq:addtional-cond-Kkernek}) is automatically satisfied if
  $\kfreq$ is compactly supported, real valued and even, and admits an
  $L^2$ derivative, such as the triangle shape kernel. 
\end{remark}
We already observe here that for the estimator
$\widehat{\gamma}_{\bfreq,\btime}(u_0;\omega_0)$ to be asymptotically unbiased,
equations~(\ref{eq:total_bias}) and ~(\ref{eq:freq_bias}) require the following
conditions on the choice of the two bandwidths $\btime$ and $\bfreq$ to be
fulfilled:
\begin{equation*}
T\btime\bfreq \to \infty\;,\quad \bfreq\to0\, \quad {\mbox{and}}\;\; \bfreq^{-1}\btime^{2\beta} \to 0\ .
\end{equation*}
Note in
particular that these conditions for an  asymptotically unbiased estimator
imply those required for the feasibility of the estimator in
Remark~\ref{rem:finite-set-of observations} ($\btime\to0$ and $T\bfreq\to\infty$). 
 
We shall discuss
possible compatible bandwidth choices below, following
the treatment of the variance of this estimator. 
\begin{theorem}
  \label{thm:variance_loc_Bartlett_est}
  Let the kernels $\ktime$ and $\kfreq$ satisfy~\ref{ass:ktime-comp-support}
  and~\ref{ass:kfreq-comp-support}.  Assume
  conditions~\ref{ass:locally-stat-hawk-finite-expbounds},
  \ref{ass:locally-stat-hawk-center-intensity-lip} and
  \ref{ass:locally-stat-hawk-unif-lip} to hold. Then, for all $\btime,\bfreq,T$
  satisfying~(\ref{eq:conditions-T-and-co-timebias-bartlett}), and for all
  $u_0,\ \omega_0 \in \Rset$, we have 
  \begin{equation}\label{eq:variance_loc_Bartlett_est}
\var\left(\widehat{\gamma}_{\bfreq,\btime}(u_0;\omega_0)\right) \ \lesssim\  \
(T\btime\bfreq)^{-1} +  \btime^{2\beta}\,\left(\btime^{\beta} \bfreq^{-1}\right)^2\ .
\end{equation}
\end{theorem}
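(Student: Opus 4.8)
The plan is to write $\widehat{\gamma}_{\bfreq,\btime}(u_0;\omega_0)$ as a polynomial of degree at most two in a centered random measure and to bound its variance term by term. Put $f:=S^{-Tu_0}\kbofreq$, $f_t:=f(\cdot-t)$, $w:=\ktbtime$ (so $\ellnorm{w}=1$ by~\ref{ass:ktime-comp-support}) and $X_t:=N_T(f_t)$. Since $\ellnorm{w}=1$, the elementary identity $\int|X_t|^2w(t)\,\rmd t-\big|\int X_t w(t)\,\rmd t\big|^2=\tfrac12\iint|X_t-X_s|^2 w(t)w(s)\,\rmd t\,\rmd s$ applied to~(\ref{eq:kernel-est}) gives
\begin{equation*}
\widehat{\gamma}_{\bfreq,\btime}(u_0;\omega_0)=\tfrac12\iint|X_t-X_s|^2\,w(t)w(s)\,\rmd t\,\rmd s\;.
\end{equation*}
Writing $X_t=\esp[X_t]+Y_t$ with $Y_t:=\tilde N_T(f_t)$ for the centered measure $\tilde N_T$, and $D_{t,s}:=\esp[X_t]-\esp[X_s]$, one then decomposes $\widehat{\gamma}_{\bfreq,\btime}(u_0;\omega_0)=\widehat\gamma_0+L+Q$ with $\widehat\gamma_0$ deterministic, $L=\mathrm{Re}\,\tilde N_T(\Phi)$ linear in $\tilde N_T$ where $\Phi:=\iint\overline{D_{t,s}}(f_t-f_s)\,w(t)w(s)\,\rmd t\,\rmd s$, and $Q=\tfrac12\iint|\tilde N_T(f_t-f_s)|^2\,w(t)w(s)\,\rmd t\,\rmd s$ quadratic in $\tilde N_T$. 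As $\widehat\gamma_0$ is deterministic, $\var(\widehat{\gamma}_{\bfreq,\btime}(u_0;\omega_0))=\var(L+Q)\le2\var(L)+2\var(Q)$, so it suffices to prove $\var(Q)\lesssim(T\btime\bfreq)^{-1}$ and $\var(L)\lesssim(T\btime\bfreq)^{-1}+\btime^{2\beta}(\btime^\beta\bfreq^{-1})^2$. Both will follow from the moment--cumulant formula, expressing joint moments of $N_T(\cdot)$ through joint cumulants and each $\cum{N_T(\phi_1),\dots,N_T(\phi_j)}$ as an integral of $\phi_1\otimes\cdots\otimes\phi_j$ against the $j$-th factorial cumulant measure of $N_T$ --- legitimate since all moments of $N_T(\phi)$ for bounded compactly supported $\phi$ are finite under~\ref{ass:locally-stat-hawk-finite-expbounds} --- together with the bounds of Section~\ref{sec:dev-bounds}, which give, uniformly in $T$ and in parameters satisfying~\ref{ass:locally-stat-hawk-finite-expbounds}--\ref{ass:locally-stat-hawk-unif-lip}: $\sup_t m_{1T}(t)\lesssim1$ for the mean density, and a uniform $L^\infty$ bound plus uniform exponential spatial decay for each reduced cumulant density $c_{j,T}$ (hence $\sup_x\int|c_{2,T}(x,y)|\,\rmd y\lesssim1$, $\sup_{x_1}\iiint|c_{4,T}(x_1,\dots,x_4)|\,\rmd x_2\,\rmd x_3\,\rmd x_4\lesssim1$, and so on). The kernels enter only through $\ellnorm{w}=1$, $\ellnorm{w}[\infty]\lesssim(T\btime)^{-1}$, $\supp(w)\subseteq T\btime\,\supp(\ktime)$, $\ellnorm{\kbofreq}[2]^2=2\pi$ (by~\ref{ass:kfreq-comp-support}), $\ellnorm{\kbofreq}[1]\lesssim\bfreq^{-1/2}$ and $\supp(\kbofreq)\subseteq\bfreq^{-1}\supp(\kfreq)$.

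\emph{Variance of $Q$.} Because $\tilde N_T$ has vanishing first cumulant, the moment--cumulant expansion of $\esp\big[|\tilde N_T(f_{t_1}-f_{s_1})|^2\,|\tilde N_T(f_{t_2}-f_{s_2})|^2\big]-\esp[\cdot]\esp[\cdot]$ retains only three groups of terms: one fourth-order cumulant of $N_T$, and two products of two second-order cumulants of $N_T$, evaluated at the functions $f_{t_i}-f_{s_i}$ and their conjugates. Thus $\var(Q)$ is the four-fold integral of these against $\tfrac14\,w(t_1)w(s_1)w(t_2)w(s_2)$ over $(t_1,s_1,t_2,s_2)$. The exponential decay of the cumulant densities forces the point-process integration variables to lie within $O(1)$ of one another, while $\supp(\kbofreq)\subseteq\bfreq^{-1}\supp(\kfreq)$ forces the time variables $t_i,s_i$ that are thereby coupled to lie within $O(\bfreq^{-1})$ of one another; integrating out each coupled time variable against $w$ costs a factor $\ellnorm{w}[\infty]\bfreq^{-1}\lesssim(T\btime\bfreq)^{-1}$. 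The two ``double-covariance'' groups then produce the leading term $(T\btime\bfreq)^{-1}$, and the fourth-order-cumulant group, including all diagonal components of the underlying factorial cumulant measure, is of strictly lower order, $\le(T\btime\bfreq)^{-1}$ under~(\ref{eq:conditions-T-and-co-timebias-bartlett}) (using that $T\btime\bfreq\ge1$ and $\bfreq\le1$ force $T\btime\ge1$). Hence $\var(Q)\lesssim(T\btime\bfreq)^{-1}$.

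\emph{Variance of $L$.} Since $L=\mathrm{Re}\,\tilde N_T(\Phi)$, one has $\var(L)\le\esp\big[|\tilde N_T(\Phi)|^2\big]=\int|\Phi|^2 m_{1T}+\iint\Phi(x)\overline{\Phi(y)}\,c_{2,T}(x,y)\,\rmd x\,\rmd y\lesssim\ellnorm{\Phi}[2]^2$, using $\sup_t m_{1T}(t)\lesssim1$ and Schur's test with $\sup_x\int|c_{2,T}(x,y)|\,\rmd y\lesssim1$. The crux is a sharp bound on $\ellnorm{\Phi}[2]$. By the $(t,s)$-symmetry of $\overline{D_{t,s}}(f_t-f_s)$ one reduces to $\Phi=2\int f_t\,w(t)\,\overline{\big(\esp[X_t]-\esp[B]\big)}\,\rmd t$ with $B:=\int X_t\,w(t)\,\rmd t$; expanding $\ellnorm{\Phi}[2]^2$ as the corresponding double integral over $(t,t')$ and using that $\int f_t\overline{f_{t'}}$ has modulus $\le\ellnorm{\kbofreq}[2]^2$ and vanishes for $|t-t'|\gtrsim\bfreq^{-1}$, together with $\ellnorm{w}[\infty]\lesssim(T\btime)^{-1}$, gives $\ellnorm{\Phi}[2]^2\lesssim\big(\sup_{t,s\in\supp(w)}|\esp[X_t]-\esp[X_s]|\big)^2(T\btime\bfreq)^{-1}$. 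Finally, from $\esp[X_t]=\int\kbofreq(y)\,m_{1T}(y+t+Tu_0)\,\rmd y$ comes the exact identity $\esp[X_t]-\esp[X_s]=\int\kbofreq(y)\big(m_{1T}(y+t+Tu_0)-m_{1T}(y+s+Tu_0)\big)\,\rmd y$ (which vanishes when $m_{1T}$ is constant); combined with the $\beta$-Hölder regularity $|m_{1T}(r)-m_{1T}(r')|\lesssim(|r-r'|/T)^\beta$ (a consequence of~(\ref{eq:local-mean}) and Assumptions~\ref{ass:locally-stat-hawk-center-intensity-lip}--\ref{ass:locally-stat-hawk-unif-lip}, cf. \cite{roueff-vonsachs-sansonnet2016}) and with $\mathrm{diam}(\supp(w))\lesssim T\btime$, this yields $\sup_{t,s\in\supp(w)}|\esp[X_t]-\esp[X_s]|\lesssim\btime^\beta\,\ellnorm{\kbofreq}[1]\lesssim\btime^\beta\bfreq^{-1/2}$. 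Therefore $\var(L)\lesssim\btime^{2\beta}\bfreq^{-1}\,(T\btime\bfreq)^{-1}$, and since $a:=(T\btime\bfreq)^{-1}\le1$ under~(\ref{eq:conditions-T-and-co-timebias-bartlett}), the elementary inequality $ab\le a+b^2$ with $b:=\btime^{2\beta}\bfreq^{-1}$ gives $\var(L)\lesssim(T\btime\bfreq)^{-1}+\btime^{2\beta}(\btime^\beta\bfreq^{-1})^2$. Together with $\var(Q)\lesssim(T\btime\bfreq)^{-1}$ this proves~(\ref{eq:variance_loc_Bartlett_est}).

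\emph{Main obstacle.} The step I expect to be the hardest is the bookkeeping in $\var(Q)$: although only three groups of terms survive, each joint cumulant of $N_T$ decomposes further over the diagonal (lower-dimensional) components of its factorial cumulant measure, and for every resulting multiple integral one must simultaneously use the uniform exponential decay of the Hawkes cumulant densities from Section~\ref{sec:dev-bounds}, the $O(\bfreq^{-1})$ time-support of $\kbofreq$, and $\ellnorm{w}[\infty]\lesssim(T\btime)^{-1}$, checking that each contribution is at worst $(T\btime\bfreq)^{-1}$ under~(\ref{eq:conditions-T-and-co-timebias-bartlett}). A secondary but subtle point is the estimate of $\ellnorm{\Phi}[2]$: a crude triangle-inequality bound gives only $\ellnorm{\Phi}[2]^2\lesssim\btime^{2\beta}\bfreq^{-1}$, which is not compatible with the claimed rate, so the additional $(T\btime\bfreq)^{-1}$ factor coming from the bounded support of $\int f_t\overline{f_{t'}}$ is genuinely needed.
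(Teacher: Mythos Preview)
Your decomposition is essentially equivalent to the paper's, just organized differently. Unfolding your variance identity for the quadratic piece gives
\[
Q=\int\big|\overline{\overline{N}}_T(f_t)\big|^2\,\ktbtime(t)\,\rmd t-\big|\overline{\overline{N}}_T(f\ast\ktbtime)\big|^2
=\widetilde{\gamma}_{\bfreq,\btime}(u_0;\omega_0)-\big|\overline{\overline{N}}_T(f\ast\ktbtime)\big|^2,
\]
so your $Q$ is the paper's exactly centered proxy $\widetilde{\gamma}$ (Eq.~(\ref{eq:exact-centering-approx})) minus a small correction, and your linear term $L$ coincides with the paper's $B_T$ in the proof of Lemma~\ref{lem:approximation-exact-centering}. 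Both routes deliver the same intermediate bound $\var(L)\lesssim\btime^{2\beta}\bfreq^{-1}(T\btime\bfreq)^{-1}$; the paper reaches it via~(\ref{eq:bias-t-u_0-shift}) (the local-stationary approximation of the mean) rather than via a H\"older estimate on $m_{1T}$, but the two arguments are interchangeable here.

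The genuine difference is in how $\var(Q)$ is controlled. You invoke ``Section~\ref{sec:dev-bounds}'' as supplying uniform $L^\infty$ and exponentially decaying bounds on the reduced cumulant densities $c_{j,T}$, up to order four. That is \emph{not} what Section~\ref{sec:dev-bounds} establishes: it gives exponential moment bounds (Proposition~\ref{prop:uniform-moment-bound}), exponential decay of the cluster extent (Proposition~\ref{prop:Delta-exp-moment}), a conditional-expectation mixing bound (Proposition~\ref{prop:cond-exp-control}), a Burkh\"older-type inequality (Proposition~\ref{prop:burkh-ineq}), and a direct covariance-of-squares bound (Corollary~\ref{cor:cov-squares}). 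No fourth-order cumulant density is ever computed or bounded. The paper's proof of $\var(Q)\lesssim(T\btime\bfreq)^{-1}$ (Lemma~\ref{lem:var_lead_term}) bypasses cumulants entirely: it writes the variance as a double integral of $\cov(|\overline{\overline{N}}_T(f_t)|^2,|\overline{\overline{N}}_T(f_{t'})|^2)$, applies Corollary~\ref{cor:cov-squares} with $\gamma=(|t-t'|-\ell)_+$ where $\ell\lesssim\bfreq^{-1}$ is the support length of $\kbofreq$, bounds the remaining $\|\overline{\overline{N}}_T(f_t)\|_q$ factors by Proposition~\ref{prop:burkh-ineq}, and integrates. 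In other words, the ``bookkeeping obstacle'' you single out as the hardest step is exactly what the paper avoids by packaging all higher-order information into the single weak-dependence inequality of Corollary~\ref{cor:cov-squares}. Your cumulant route is legitimate in principle, but it requires auxiliary bounds on $c_{3,T}$ and $c_{4,T}$ for non-stationary Hawkes processes that are neither in this paper nor immediate from its Section~\ref{sec:dev-bounds}; if you pursue it, that is the part you would actually have to prove.
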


\subsection{Immediate consequences and related works}

In order to optimize bandwidth choices in time and in frequency to derive an
optimal rate of MSE-consistency of the estimator
$\widehat{\gamma}_{\bfreq,\btime}(u_0;\omega_0)$ for a given time-frequency
point $(u_0;\omega_0)$ we observe, by
equations~(\ref{eq:total_bias}),~(\ref{eq:freq_bias})
and~(\ref{eq:variance_loc_Bartlett_est}), that the MSE satisfies
\[
\mbox{MSE}\left(\widehat{\gamma}_{\bfreq,\btime}(u_0;\omega_0)\right)\ \lesssim\ \bfreq^4\ + \btime^{2\beta}\ +  (T\btime\bfreq)^{-1}\ +  \left(\btime^{2\beta} \bfreq^{-1}\right)^2\ .
\]
The MSE-rate $T^{-\frac{4\beta}{5\beta+2}}\ $ is achieved by optimizing this
upper bound, that is, by imposing
$\bfreq^4 \sim \btime^{2\beta} \sim (T\btime\bfreq)^{-1}\ $ (leading to
$\left(\btime^{2\beta} \bfreq^{-1}\right)^2 \sim \bfreq^6\ $), and hence by
setting $\ \btime \sim T^{-\frac{2}{2+5\beta}}\ $ and
$\ \bfreq \sim T^{-\frac{\beta}{2+5\beta}}$. The fact
that this rate bound is obtained by balancing the two squared bias terms
$\btime^{2\beta}$ and $\bfreq^4$ with the variance term $(T\btime\bfreq)^{-1}$
indicates that all the other terms appearing in the upper
bounds~(\ref{eq:total_bias}) and~(\ref{eq:variance_loc_Bartlett_est}) are
negligible. Thus, since the bias terms $\btime^{\beta}$ and $\bfreq^2$ and the
variance term $(T\btime\bfreq)^{-1}$ correspond to the usual bias and variance
rates of a kernel estimator of a local spectral density estimator (see
\cite[Example~4.2]{dahlhaus-2009} for locally stationary linear time series),
it is clear that the obtained rate is sharp for this moment estimator under our
assumptions.  Note also that the MSE-rate $T^{-\frac{4\beta}{5\beta+2}}\ $
corresponds to the minimax lower bound for evolutionary spectrum estimation
established in \cite[Theorem~2.1]{neumann-vonsachs1997} in the Sobolev space
$W_{\infty,\infty}^{\beta,2}$. Although insightful, the comparison is not
completely rigorous as their model for establishing this lower bound is a
benchmark for a class of non-stationary time series and the MSE they consider
is integrated. Therefore, a particularly interesting problem for future work
would be to derive (hopefully large) classes of locally stationary point processes on which our
estimator achieves the minimax rate.

In the same line of thoughts about the performance of our kernel estimator, the
question naturally arises about the data-driven choice of the bandwidths
$\btime$ and $\bfreq$. This question of bandwidth selection in the context of
locally stationary time series has been addressed only recently, see
\cite{giraud-roueff-sanchez-aos2015,richter-dahlhaus2017} for adaptive
prediction and parameter curve estimation, respectively. The problem of
adaptive kernel estimation of the local spectral density for locally stationary
time series has been more specifically addressed in
\cite{vandelft-eichler-2017}, where a practical approach is derived and
studied. It is mainly based on the central limit theorem established in
\cite[Example~4.2]{dahlhaus-2009} for this estimator.  This methodology could
be adapted to the case of locally stationary point processes. A first step in
this direction would be to establish a central limit theorem for our estimator
$\widehat{\gamma}_{\bfreq,\btime}(u_0,\omega_0)$ at a given time-frequency
point, which is also left for future work. Note also that, in practical time
frequency analysis, the bandwidths $\btime$ and $\bfreq$ are often chosen
having in mind a physical interpretation. For instance, in our real data
example of Section~\ref{sec:numer-exper}, on each day, transaction data
is collected between 9:00 AM and 5:30 PM, hence over $8.5$ hours. Our choice 
$\btime=.15$ and $\bfreq=.005$ corresponds to saying that we consider finance
transactions data as roughly stationary over $8.5\times.15\approx1$ hour and
16 minutes of time, and that the spectrum obtained from such data
has maximal frequency resolution  $.005$ Hz (we may distinguish between two
periodic behaviors present in the data only if their frequencies differ by at
least this value).

To conclude this section, let us discuss whether our approach could be used for
the parameter estimation of locally stationary Hawkes processes. In a fully
non-parametric approach, one would be interested in estimating the two
(possibly smooth or sparse) unknown functions that are the baseline intensity
function $u\mapsto\locstat{\lambda}_c(u)$ and the local fertility function
$(s,u)\mapsto\locstat{p}(s;u)$. In a parametric approach, one would assume
these functions to depend on an unknown finite dimensional parameter $\theta$
in a (known) form $u\mapsto\locstat{\lambda}_c(u|\theta)$ and
$(s,u)\mapsto\locstat{p}(s;u|\theta)$, and try to estimate $\theta$. An
intermediate approach, proposed in
\cite[Section~5.1]{roueff-vonsachs-sansonnet2016} to derive simple examples of
locally stationary Hawkes processes, is to consider a parametric stationary
model for the fertility function, say $s\mapsto\stat{p}(s|\theta)$ for
$\theta\in\Theta\subset\rset^d$, and to deduce a local one of the form
$(s,u)\mapsto\locstat{p}(s;u)=\stat{p}(s|\boldsymbol{\theta}(u))$, where now
$\boldsymbol{\theta}$ is a $\Theta$-valued function of the absolute time.  In
all these cases, the estimators $\widehat{m}_{\btime}$ and
$\widehat{\gamma}_{\bfreq,\btime}$ could be used as empirical moments to
estimate $\locstat{\lambda}_c$, $\locstat{p}$, $\theta$, or the curve
$\boldsymbol{\theta}$.  Since our estimators are consistent, this method would
in principle work whenever the unknown quantities to estimate can be deduced
from the local mean density $\locstat{m}_1$ and local Bartlett spectrum
$\locstat{\gamma}$ through Relations~(\ref{eq:local-mean})
and~(\ref{eq:local-bartlett-spectral-density}), respectively.  More direct
methods to estimate the parameters of interest for non-stationary Hawkes
processes have been proposed in
\cite{chen-hall-2013,chen-hall-2016,mammen2017}.  In the first two references,
only the baseline intensity is time varying, and a different asymptotic setting
is considered, where this baseline intensity tends to infinity through a
multiplicative constant. In the fully non-parametric case, an identifiability
problem is pointed out in \cite[Section~2.2]{chen-hall-2016}. We do not have
this problem in our asymptotic scheme, since~(\ref{eq:local-mean})
and~(\ref{eq:local-bartlett-spectral-density}) show that the base line
intensity $\locstat{\lambda}_c(u)$ can be completely identified from the local
mean density $\locstat{m}_1(u)$ and the local Bartlett spectrum
$\locstat{\gamma}(u;\cdot)$ alone using the formula
$$
\locstat{\lambda}_c(u)=\locstat{m}_1(u)\left(\frac{\locstat{m}_1(u)}{2\pi \locstat{\gamma}(u;0)}\right)^{1/2}\;.
$$
The model considered in \cite{mammen2017} is a multivariate version of the
locally stationary Hawkes process with the same asymptotic setting as ours, and
additional assumptions on the (multivariate) fertility function.  Both the
baseline intensity and the (time varying) fertility function are estimated in a
non-parametric fashion using a direct method based on localized mean square
regression and a decomposition of the local fertility function on a B-spline
base. Such methods should be more efficient than using the local mean density
and Bartlett spectrum to build moment estimators of these parameters, since
they rely on the intrinsic auto-regression structure of the underlying
process. In contrast, as far as the time-frequency analysis is concerned, which
is the main focus of our contribution, the estimator that we propose should be
relevant to estimate the local Bartlett spectrum beyond the case of locally
stationary Hawkes processes, namely, for any locally stationary point process
for which the general formula~(\ref{eq:regularized})
and~(\ref{eq:regularized-new}) make sense.

\subsection{Main ideas of the proofs}
\subsubsection{Local approximations of moments}
\label{sec:appr-results-locally}

An essential step for treating the bias terms is to be able to approximate, as
$T\to\infty$, in the neighborhood of $uT$, the first and second moments of $N_T$ by
that of the local stationary approximation $N(\cdot;u)$ defined as in
Section~\ref{sec:local-mean-density-bartlett}. We first state the two
approximations that directly follow
\cite[Theorem~4]{roueff-vonsachs-sansonnet2016} with $m=1,2$.

\begin{theorem}\label{thm:mean-var-approx}
  Let $\beta\in(0,1]$ and let $(N_T)_{T\geq1}$ be a locally stationary Hawkes
  process satisfying conditions
  (\ref{eq:locally-stat-hawk-finite-intensity}),\ref{ass:locally-stat-hawk-center-intensity-lip},
  \ref{ass:locally-stat-hawk-unif-lip}, ~(\ref{eq:assump-thm-log-laplace-zeta})
  and~(\ref{eq:assump-thm-log-laplace-pSbeta}). Let $(N(\cdot;u))_{u\in\rset}$
  be the collection of stationary Hawkes process defined as in
  Section~\ref{sec:local-mean-density-bartlett}. 
  Then, for all $T\geq1$, $u\in\rset$ and all bounded integrable functions $g$, we
  have
  \begin{align}
    \label{eq:mean-approx-hawkes-local}
    \left|\esp[N_T(S^{-Tu}g)] - \esp[N(g;u)]\right| &\lesssim
    \left(\ellnorm{g}+\ellbetanorm{g}\right)\;
T^{-\beta}\ ,\\
   \label{eq:var-approx-hawkes-local}
   \left|\var\left(N_T(S^{-Tu}g))\right)-\var\left(N(g;u)\right)\right|&
\lesssim
\left(\ellnorm{g}+\ellnorm{g}[\infty]\right)
\left(\ellnorm{g}+\ellbetanorm{g}\right)
\;T^{-\beta}\;.
\end{align}
\end{theorem}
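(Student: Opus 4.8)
The plan is to obtain both displays as the cases $m=1$ and $m=2$ of the general local moment approximation \cite[Theorem~4]{roueff-vonsachs-sansonnet2016}, applied with all test functions equal to the given $g$. That theorem provides, for each fixed order $m$ and under exactly the present assumptions, a bound of the (schematic) form
\[
\Big|\esp\Big[\prod_{j=1}^{m} N_T(S^{-Tu}g_j)\Big] - \esp\Big[\prod_{j=1}^{m} N(g_j;u)\Big]\Big| \lesssim T^{-\beta}\,\prod_{j=1}^{m}\big(\ellnorm{g_j}+\ellnorm{g_j}[\infty]+\ellbetanorm{g_j}\big)\,,
\]
with the $L^\infty$ factor absent at order $m=1$. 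Taking $g_1=\dots=g_m=g$ and reading off $m=1$ gives directly (\ref{eq:mean-approx-hawkes-local}): the first moment measures of $N_T$ and of $N(\cdot;u)$ are absolutely continuous with densities bounded, uniformly in $T$ and $u$, by $\ellnorm{\locstat{\lambda}_c}[\infty](1-\zetals)^{-1}$ under (\ref{eq:locally-stat-hawk-finite-intensity}), so only the combination $\ellnorm{g}+\ellbetanorm{g}$ is needed at first order.

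For (\ref{eq:var-approx-hawkes-local}) I would set $f=S^{-Tu}g$, $a=\esp[N_T(f)]$, $b=\esp[N(g;u)]$ and use the elementary identity
\[
\var\big(N_T(f)\big) - \var\big(N(g;u)\big) = \Big(\esp[N_T(f)^2]-\esp[N(g;u)^2]\Big) - (a-b)(a+b)\,.
\]
The first bracket is the $m=2$ case of \cite[Theorem~4]{roueff-vonsachs-sansonnet2016} with both test functions equal to $g$, contributing $\lesssim(\ellnorm{g}+\ellnorm{g}[\infty])(\ellnorm{g}+\ellbetanorm{g})\,T^{-\beta}$; the factor $\ellnorm{g}[\infty]$ enters here because the second moment carries a diagonal contribution $\int g^2\,\rmd m_{1}$ (for $N_T$ and for $N(\cdot;u)$ respectively), bounded by $\ellnorm{\locstat{\lambda}_c}[\infty](1-\zetals)^{-1}\,\ellnorm{g}[\infty]\,\ellnorm{g}$ via $\ellnorm{g}[2]^2\le\ellnorm{g}[\infty]\,\ellnorm{g}$, while the off-diagonal (reduced) part decays only polynomially, whence $\ellbetanorm{g}$. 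For the second bracket, $|a-b|\lesssim(\ellnorm{g}+\ellbetanorm{g})\,T^{-\beta}$ by the case already treated, and $|a+b|\le\esp[N_T(|f|)]+\esp[N(|g|;u)]\lesssim\ellnorm{g}$ again by the uniform mean-density bound; multiplying and combining yields (\ref{eq:var-approx-hawkes-local}).

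The only step requiring genuine care is the norm bookkeeping: checking that, once all test functions are set equal to $g$, the product of mixed norms in \cite[Theorem~4]{roueff-vonsachs-sansonnet2016} reduces to exactly $\ellnorm{g}+\ellbetanorm{g}$ for $m=1$ and to $(\ellnorm{g}+\ellnorm{g}[\infty])(\ellnorm{g}+\ellbetanorm{g})$ for $m=2$, and that the shift $S^{-Tu}$ is harmless — indeed $\ellnorm{S^{-Tu}g}=\ellnorm{g}$ and $\ellnorm{S^{-Tu}g}[\infty]=\ellnorm{g}[\infty]$ are shift invariant, and although the weighted norm is not, this is already absorbed in the statement of \cite[Theorem~4]{roueff-vonsachs-sansonnet2016}, which is phrased for the shifted functionals. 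Beyond this and the single algebraic identity above there is nothing new to do; in particular, if the cited result is available directly for joint cumulants, the case $m=2$ is even more immediate, since the second cumulant of $N_T(S^{-Tu}g)$ is $\var(N_T(S^{-Tu}g))$ and that of $N(g;u)$ is $\var(N(g;u))$, so no decomposition is needed.
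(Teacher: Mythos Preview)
Your proposal is correct and matches the paper's own treatment: the paper does not give a separate proof but simply states that both bounds ``directly follow \cite[Theorem~4]{roueff-vonsachs-sansonnet2016} with $m=1,2$'', which is precisely what you do. Your additional bookkeeping (the variance decomposition and the norm tracking) is the natural way to unpack that citation if the referenced result is phrased for raw moments rather than cumulants, and your final remark already notes that if it is phrased for cumulants the $m=2$ case is immediate.
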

The control of the bias in Theorem~\ref{thm:est_loc_mean_intensity} directly
follows from~(\ref{eq:mean-approx-hawkes-local}). However it turns out
that~(\ref{eq:var-approx-hawkes-local}) is not sharp enough to control the bias
of the local Bartlett spectrum and thus to obtain the expected convergence
rate. The basic reason is that it involves $L^1$ (weighted) norms of $g$ in the
upper bound instead of $L^2$ norms.  In order to recover the correct rates of
convergence, we rely on the following new result where the $L^1$ (weighted)
norms are indeed replaced by $L^2$ (weighted) norms, or, to be more precise,
where the remaining weighted $L^1$-norms are compensated by an exponentially
decreasing term in $T$, and will thus turn out to be negligible.
\begin{theorem}\label{thm:new-var-approx}
  Let $\beta\in(0,1]$ and let $(N_T)_{T\geq1}$ be a locally stationary Hawkes
  process satisfying conditions
  \ref{ass:locally-stat-hawk-finite-expbounds},\ref{ass:locally-stat-hawk-center-intensity-lip}
  and \ref{ass:locally-stat-hawk-unif-lip}. Let $(N(\cdot;u))_{u\in\rset}$ be
  the collection of stationary Hawkes process defined as in
  Section~\ref{sec:local-mean-density-bartlett}.  Then, for all bounded and compactly supported 
  functions $g$, for all $T\geq1$ and $u\in\rset$, 
\begin{align}
  \label{eq:bound-of-var-ell2}
  &\left| \var(N_T(g)) \right|\lesssim \ellnorm{g}[2]^2+\rme^{- A_1 T}\ellexpnorm{g}[1][d]^2\;,\\
\nonumber
\big|\var\left(N_T(S^{-Tu}g))\right)&-\var\left(N(g;u)\right)\big|\\
   \label{eq:new-var-approx-hawkes-local}
&\lesssim
\left\{\ellnorm{g}[2]^2+\rme^{- A_1 T}\ellexpnorm{g}[1][d]^2+\ellbetanorm{g}[2][\beta]\left(\ellnorm{g}[2]+\rme^{- A_1
  T}\ellexpnorm{g}[1][d]\right)\right\}\, T^{-\beta}
  \; .
\end{align}
\end{theorem}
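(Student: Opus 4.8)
The plan is to bypass the general moment machinery of \cite{roueff-vonsachs-sansonnet2016} and argue directly at the level of the reduced covariance density. Writing $m_{1T}$ for the mean density of $N_T$ and $c_T$ for its reduced covariance density (defined through the factorial second moment measure $M_T^{(2)}(\rmd s\,\rmd t)=(m_{1T}(s)m_{1T}(t)+c_T(s,t))\,\rmd s\,\rmd t$), one has for any bounded compactly supported $g$
\[
\var(N_T(g))=\int g(t)^2 m_{1T}(t)\,\rmd t+\iint g(s)g(t)\,c_T(s,t)\,\rmd s\,\rmd t,
\]
and likewise $\var(N(g;u))=\locstat{m}_1(u)\,\ellnorm{g}[2]^2+\iint g(s)g(t)\,\locstat{c}(s-t;u)\,\rmd s\,\rmd t$, where $\locstat{c}(\cdot;u)$ is the reduced covariance density of $N(\cdot;u)$, with Fourier transform $2\pi\locstat{\gamma}(u;\cdot)-\locstat{m}_1(u)$. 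The first step is to record, from the branching (cluster) representation, the last-common-ancestor identity: for $s<t$,
\[
c_T(s,t)=m_{1T}(s)\,\phi_{T,s}(t)+\int_{-\infty}^{s} m_{1T}(r)\,\phi_{T,r}(s)\,\phi_{T,r}(t)\,\rmd r,
\]
where $\phi_{T,r}$ is the expected intensity of the proper descendants of a point at real time $r$, solving the causal renewal equation $\phi_{T,r}(t)=\locstat{p}(t-r;t/T)+\int_r^t\locstat{p}(t-a;t/T)\,\phi_{T,r}(a)\,\rmd a$; the stationary identity is the same with $\phi_{T,r}$ replaced by $\locstat{\phi}_u$, the descendant intensity attached to $\locstat{p}(\cdot;u)$ (see \cite[Example~8.2(e)]{daley-vere-jones-2003} for the stationary case).

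The second step is a uniform exponential control. Under Assumption~\ref{ass:locally-stat-hawk-finite-expbounds}, conjugating the renewal operator by $\rme^{d(\cdot-r)}$ turns it into a Volterra operator whose kernel mass is at most $\zetaexpls{d}<1$ and whose forcing term is bounded by $\zetaexpls[\infty]{d}$; hence $\phi_{T,r}(t)\lesssim\rme^{-d(t-r)}$ uniformly in $T\geq1$ and $r\leq t$, and similarly $\locstat{\phi}_u(\tau)\lesssim\rme^{-d\tau}$. Since $\ellnorm{m_{1T}}[\infty]\leq\ellnorm{\locstat{\lambda}_c}[\infty]/(1-\zetals)\lesssim1$, the identity above gives $|c_T(s,t)|\lesssim\rme^{-d|s-t|}$ uniformly. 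Then, by Young's inequality ($\ellnorm{h\ast g}[2]\leq\ellnorm{h}[1]\ellnorm{g}[2]$ with $h=\rme^{-d|\cdot|}$) together with Cauchy--Schwarz, $|\iint g(s)g(t)c_T(s,t)\,\rmd s\,\rmd t|\lesssim\ellnorm{g}[2]^2$, and the diagonal term is $\leq\ellnorm{m_{1T}}[\infty]\ellnorm{g}[2]^2\lesssim\ellnorm{g}[2]^2$; this yields~(\ref{eq:bound-of-var-ell2}), the exponentially small correction being a harmless by-product of the comparison step below. Structurally this is why one can dispense with the weighted $L^1$ norms of Theorem~\ref{thm:mean-var-approx}: off the diagonal $c_T$ is an honest $L^1$ convolution-type kernel, so its quadratic form is a bounded Fourier multiplier, exactly as in the stationary case where $\var(N(g;u))=\int|\fourierf(\omega)|^2\locstat{\gamma}(u;\omega)\,\rmd\omega$ with $\locstat{\gamma}(u;\cdot)$ bounded uniformly by~(\ref{eq:local-bartlett-spectral-density}).

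For the approximation bound~(\ref{eq:new-var-approx-hawkes-local}) I would compare $c_T(Tu+s',Tu+t')$ with $\locstat{c}(s'-t';u)$ term by term, using $\var(N_T(S^{-Tu}g))=\int g(s')^2 m_{1T}(Tu+s')\,\rmd s'+\iint g(s')g(t')c_T(Tu+s',Tu+t')\,\rmd s'\,\rmd t'$. The diagonal contribution reduces to $|m_{1T}(Tu+s')-\locstat{m}_1(u)|\lesssim T^{-\beta}(1+|s'|^\beta)$, which follows from~(\ref{eq:mean-approx-hawkes-local}) and the Hölder-$\beta$ continuity of $\locstat{m}_1$ implied by Assumptions~\ref{ass:locally-stat-hawk-center-intensity-lip}--\ref{ass:locally-stat-hawk-unif-lip}. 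For the covariance part the crucial point is to compare the non-stationary descendant intensity $\phi_{T,Tu+\sigma}$ issued from an ancestor at real time $Tu+\sigma$ \emph{not} with $\locstat{\phi}_u$, but with the stationary intensity $\locstat{\phi}_{u+\sigma/T}$ at the ancestor's own absolute location: along a descending chain of temporal extent $\tau$ the second argument of the fertility stays within $u+\sigma/T+[0,\tau]/T$, so the mismatch is governed by $(\tau/T)^\beta$ via Assumption~\ref{ass:locally-stat-hawk-unif-lip}; since $\sup_{\tau\geq0}\tau^\beta\rme^{-d\tau}<\infty$, propagating this through the exponentially weighted contractive renewal equation gives $|\phi_{T,Tu+\sigma}(Tu+\sigma+\tau)-\locstat{\phi}_{u+\sigma/T}(\tau)|\lesssim T^{-\beta}\rme^{-d'\tau}$ with no logarithmic loss. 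The residual mismatch $|\locstat{\phi}_{u+\sigma/T}(\tau)-\locstat{\phi}_u(\tau)|\lesssim\min(|\sigma/T|^\beta,1)\,\rme^{-d\tau}$ enters the last-common-ancestor integral multiplied by a factor decaying like $\rme^{-d(s'-\sigma)}$, so it is either $\lesssim T^{-\beta}(1+|s'|^\beta)$ (when $|\sigma|\lesssim|s'|+1$) or contributes only $\lesssim\rme^{-A_1 T}$ (when $s'-\sigma$ is of order $T$). Collecting, $|c_T(Tu+s',Tu+t')-\locstat{c}(s'-t';u)|\lesssim(T^{-\beta}(1+|s'|^\beta+|t'|^\beta)+\rme^{-A_1 T})\,\rme^{-d'|s'-t'|}$; integrating against $|g(s')g(t')|$, applying Young to $\rme^{-d'|\cdot|}$, bounding $\int g(s')^2|s'|^\beta\,\rmd s'\leq\ellnorm{g}[2]\,\ellbetanorm{g}[2][\beta]$ by Cauchy--Schwarz, and treating the $\rme^{-A_1 T}$ part with the crude majorant $\ellexpnorm{g}[1][d]$ reproduces the right-hand side of~(\ref{eq:new-var-approx-hawkes-local}), the diagonal term being absorbed likewise.

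The hard part is the third step: retaining the sharp $T^{-\beta}$ rate while carrying the $g$-dependence through $\ellnorm{g}[2]$ and $\ellbetanorm{g}[2][\beta]$ rather than through the weighted $L^1$ norms of Theorem~\ref{thm:mean-var-approx}. This forces a careful localisation in the time-since-ancestor variable, the re-centring of the stationary comparison at the ancestor's own absolute location (so that the Hölder modulus is only ever integrated against the exponentially small tail $\tau\mapsto\tau^\beta\rme^{-d\tau}$), and a verification that every contribution not controlled by a weighted $L^2$ norm is genuinely of size $\rme^{-A_1 T}$ times a crude $\ellexpnorm{g}[1][d]$ factor. Carrying out this bookkeeping rigorously is precisely the purpose of the moment estimates of Section~\ref{sec:dev-bounds}, which here supersede the general bounds of \cite{roueff-vonsachs-sansonnet2016}.
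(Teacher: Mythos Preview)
Your route is genuinely different from the paper's. The paper never writes down the reduced covariance density or a last-common-ancestor identity; instead it uses the cluster decomposition
\[
\var(N_T(g))=\int\var\big(N_T(g\mid t^c)\big)\lambda_{cT}(t^c)\,\rmd t^c+\int\big(\esp[N_T(g\mid t^c)]\big)^2\lambda_{cT}(t^c)\,\rmd t^c,
\]
identifies $t\mapsto\esp[N_T(g\mid t)]$ and $t\mapsto\var(N_T(g\mid t))$ as fixed points $\mceti$ and $\mceti[\tilde g]$ of explicit $L^1$-contractions, and then controls all the relevant weighted $L^1$/$L^2$ norms of these fixed points through a single technical lemma (Lemma~\ref{lem:Prep_Proof_Thm4}). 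Your covariance-density approach is more direct and closer to the classical Hawkes--Oakes picture; what it buys is a transparent reason why $L^2$ norms of $g$ appear (the off-diagonal kernel $c_T$ is dominated by an $L^1$ convolution kernel, hence defines a bounded Fourier multiplier). What the paper's operator framework buys is that the passage from $L^1$ to $L^2$ control of $\mceti$ is packaged once and for all in Lemma~\ref{lem:Prep_Proof_Thm4}~(v)--(viii), and the source of the $\rme^{-A_1T}\ellexpnorm{g}[1][d]$ correction is completely explicit: it comes from the cutoff $M=\varepsilon T$ needed to make $\overline{\varphi}=\sup_{|t|\leq M}\locstat{p}(-\cdot;t/T)$ satisfy $\int\overline{\varphi}<1$.

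There is, however, a concrete gap in your sketch. You claim the pointwise bound
\[
\big|\phi_{T,Tu+\sigma}(Tu+\sigma+\tau)-\locstat{\phi}_{u+\sigma/T}(\tau)\big|\lesssim T^{-\beta}\rme^{-d'\tau},
\]
arguing that ``$\sup_{\tau\geq0}\tau^\beta\rme^{-d\tau}<\infty$'' takes care of the polynomial loss. But the forcing term in the renewal equation for the difference involves $\holderls(\tau-a)\,(\tau/T)^\beta$, and Assumption~\ref{ass:locally-stat-hawk-unif-lip} only gives $\ellnorm{\holderls}<\infty$, not exponential (or even pointwise) decay of $\holderls$. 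So the forcing is bounded by $T^{-\beta}\tau^\beta\cdot(\holderls\ast\rme^{-d\cdot})(\tau)$, which is $\lesssim T^{-\beta}\tau^\beta$ but not $\lesssim T^{-\beta}\rme^{-d'\tau}$. The same issue recurs when you assert the pointwise estimate $|c_T-\locstat{c}|\lesssim T^{-\beta}(1+|s'|^\beta+|t'|^\beta)\rme^{-d'|s'-t'|}$. The fix is exactly the localisation the paper performs: split at $\tau\sim\varepsilon T$, use the H\"older bound on $\{\tau\leq\varepsilon T\}$ (where the difference kernel is only $L^1$ in $\tau$, but that still suffices for the final quadratic form via Schur/Young), and use the crude bound $2\zetaexpls[\infty]{d}\rme^{-d\tau}$ on $\{\tau>\varepsilon T\}$, which is where the $\rme^{-A_1T}\ellexpnorm{g}[1][d]$ pieces come from. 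Once you do this, your argument goes through, but the bookkeeping is essentially equivalent to (and no shorter than) the paper's Lemma~\ref{lem:Prep_Proof_Thm4}~(v)--(viii); your closing reference to Section~\ref{sec:dev-bounds} is misplaced, as those deviation bounds are used for the variance of the estimators, not for the present theorem.
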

The proof of this theorem can be found in Appendix~\ref{sec:proof_Thm4}. 
To obtain this new result, we crucially rely on the
assumption~\ref{ass:locally-stat-hawk-finite-expbounds} where controls in
exponentially weighted norms (based on
$\sup_u\ellexpnorm{\locstat{p}(\cdot;u))}[q][d]$ for $q=1,\infty$ and some
$d>0$) are assumed to strengthen the assumptions
(\ref{eq:locally-stat-hawk-finite-intensity}),
(\ref{eq:assump-thm-log-laplace-zeta})
and~(\ref{eq:assump-thm-log-laplace-pSbeta}).

\subsubsection{Bias and approximate centering}
\label{sec:bias-appr-cent}

To control all the error terms, we found useful to introduce a centered
version of $N_T$ with a centering term corresponding to its asymptotic
deterministic version. Recalling that $N_T$ behaves in a neighborhood
of $T\,u_0$ as $N(\cdot; u_0)$ and that this process admits the mean intensity
denoted by $\locstat{m}_1(u_0)$, we define, for any test
function $f$,
\begin{align}\label{eq:approx-centering}
\overline{N}_T(f;u_0):= N_T(f) - \esp[N(f;u_0)]
=\int f(s) \; [N_T(\rmd s)-\locstat{m}_1(u_0)\,\rmd s]
\;.
\end{align}
It is important to note that this ``approximate''  centering depends on an absolute location
$u_0$ as it is a good approximation of $\esp[ N_T(f)]$ only for $f$ localized
in a neighborhood of $T\,u_0$. 
Let us apply this definition. By~(\ref{eq:intensity_est}), since $\ktbtime$ integrates to 1,
the error of the local mean density estimator can directly be expressed as
\begin{align}\label{eq:bias-mean-Nbar-expr}
\widehat{m}_{\btime}(u_0)- \locstat{m}_1(u_0) =\overline{N}_T(S^{-T
  u_0}\ktbtime;u_0) \;. 
\end{align}
Hence controlling the bias of this estimator directly amounts to evaluating the
quality of the above centering.

The treatment of the bias of the local Bartlett spectrum is a bit more involved
since, as often for spectral estimators, the empirical centering
term requires a specific attention. This term appears inside the negated square modulus
of the right-hand side of~(\ref{eq:kernel-est}). To see why it is indeed a
centering term, observe that, using  $\int\ktbtime=1$, we can write
$$
\widehat{\gamma}_{\bfreq,\btime}(u_0;\omega_0) =
\widehat{E}\left[\rho(N_T(S^{-Tu_0}\kbofreq));\kbtime\right]
$$
with now $\rho$ defined, for any test function $f$, as the ``centered'' squared modulus
\begin{align*}
\rho(N_T(f))=\left|N_T(f)-\widehat{E}\left[N_T(f);\kbtime\right]\right|^2\;.  
\end{align*}
Using that $\int\kbtime=1$, the centering in~(\ref{eq:approx-centering}) can be
introduced within this definition of $\rho$, leading to
$$
\rho(N_T(f)) =
\left|\overline{N}_T(f;u_0)
-\widehat{E}\left[\overline{N}_T(f;u_0);\kbtime\right]\right|^2\; .
$$
By comparison with the previous expression of
$\widehat{\gamma}_{\bfreq,\btime}(u_0;\omega_0)$,
we easily get an expression of the local Bartlett spectrum estimator based on
this centered version of $N_T$, namely,
\begin{align}\label{eq:est_two_terms_decomp}
\widehat{\gamma}_{\bfreq,\btime}(u_0;\omega_0)
 =\widehat{E}\left[|\overline{N}_T(f;u_0)|^2;\kbtime\right]-
\left|\widehat{E}\left[\overline{N}_T(f;u_0);\kbtime\right] \right|^2 \ ,
\end{align}
where analogously to~(\ref{eq:general_moment_est}), we denote, for the test
function $f=S^{-Tu_0}\kbofreq$,
\begin{align}
\label{eq:est-first-order-square-centered}
\widehat{E}\left(\left|\overline{N}_T(f;u_0)\right|^2;\kbtime\right)& := \int
\left|\overline{N}_T(f(\cdot-t);u_0)\right|^2\ 
\ktbtime(t)\ \rmd t\ ,\\
\nonumber
\widehat{E}\left(\overline{N}_T(f;u_0);\kbtime\right)\ &:= \int
\overline{N}_T(f(\cdot-t);u_0)\ 
\ktbtime(t)\ \rmd t\;.
\end{align}
In fact, using $\int\ktime=1$, $f$ integrable and interchanging the order of integration, we
immediately get the simplification
\begin{equation}
\label{eq:est-first-order-centered}
  \widehat{E}\left(\overline{N}_T(f;u_0);\kbtime\right)= \overline{N}_T(f\ast\ktbtime;u_0)\;,
\end{equation}
where we used the convolution product $\ast$. The advantage of the new
expression~(\ref{eq:est_two_terms_decomp}) in contrast to the
original~(\ref{eq:kernel-est}) is that now we expect the negated square modulus
to be of negligible order. To see why, consider for instance the bias
in~(\ref{eq:total_bias}), for the control of which we need to bound, as the
second term of~(\ref{eq:est_two_terms_decomp}),
\begin{equation}
\label{eq:centering-term-bartlesst-expect-decomp}
\esp\left[\left|\widehat{E}\left(\overline{N}_T(f;u_0);\kbtime\right)
  \right|^2\right]
=\var\left(N_T(f\ast\ktbtime)\right)
+ \left|\esp\left[\overline{N}_T(f\ast\ktbtime;u_0)\right]\right|^2\;.
\end{equation}
(Here and in the following we repeatedly use the fact that $N_T$ and
$\overline{N}_T(\cdot;u_0)$ have the same variance).
Finally, the control of the bias term in~(\ref{eq:total_bias}) now requires to evaluate 
$$
\esp\left[\widehat{E}\left[|\overline{N}_T(S^{-Tu_0}\kbofreq;u_0)|^2;\kbtime\right]\right]
- \locstat{\gamma}_{\bfreq}(u_0;\omega_0)\;,
$$
which is again decomposed as a main term
\begin{equation}
\label{eq:bias-bartlett-main-term}
\int
\left(\var\left(N_T(S^{-Tu_0}\kbofreq(\cdot-t))\right)-\locstat{\gamma}_{\bfreq}(u_0;\omega_0)\right)\ktbtime(t)\
\rmd t \;,
\end{equation}
added to a negligible (because involving a squared bias) term 
\begin{equation}
\label{eq:bias-bartlett-neg-term2}
\int \left|\esp[\overline{N}_T(S^{-Tu_0}\kbofreq(\cdot-t);u_0)]\right|^2\ktbtime(t)\ \rmd t \;.
\end{equation}

\subsubsection{Variance terms and exact centering}
\label{sec:variance-terms-exact}

The variance of the local mean density estimator directly requires to control
the variance of $N_T(f)$ for given test functions $f$. This requires new deviation bounds for
non-stationary Hawkes processes. By deviation bounds we here mean that we bound
the moments of
\begin{equation}
  \label{eq:true-centering-def}
  \overline{\overline{N}}_T(h)\ := N_T(h) - \esp [N_T(h)]\;,
\end{equation}
where $h$ is an appropriate test function. New results in this direction are
gathered in Section~\ref{sec:dev-bounds}, where the dependence structure of
non-stationary Hawkes processes is investigated, leading to
the appropriate control of such
moments in Proposition~\ref{prop:burkh-ineq}.
For instance the moment of order 2 directly provides the adequate bound for the
variance of the local mean density estimator
\begin{equation}
\label{eq:var-mea-est-expr-barbar}
  \var\left(\widehat{m}_{\btime}(u_0)\right)=
\var\left(N_T(S^{-Tu_0}\ktbtime)\right)=\left\| \overline{\overline{N}}_T(h) \right\|_2^2 \;,
\end{equation}
where we use the notation introduced in~(\ref{eq:notation-q-norm-Lq}).

Now we turn our attention to the estimator of the local Bartlett spectrum. The
control of the moments of $\overline{\overline{N}}_T$ will essentially be used
to approximate $\widehat{\gamma}_{\bfreq,\btime}(u_0;\omega_0)$ by
\begin{equation}
  \label{eq:exact-centering-approx}
  \widetilde{\gamma}_{\bfreq,\btime}(u_0;\omega_0):=
\int \left|\overline{\overline{N}}_T(S^{-Tu_0}\kbofreq(\cdot-t))\right|^2 \ktbtime(t) \;\rmd t\;.
\end{equation}
In contrast to the centering used in $\overline{N}_T(\cdot;u)$ for controlling
the bias (a centering with respect to $\esp [N(h;u)]$) for some absolute location
$u$, here the term $\esp [N_T(h)]$ is no longer invariant as $h$ is
shifted. This is why this centering cannot be used as a direct decomposition of
estimator $\widehat{\gamma}_{\bfreq,\btime}(u_0;\omega_0)$ as
in~(\ref{eq:est_two_terms_decomp}). Instead we use a bound on
the error of approximating $\widehat{\gamma}_{\bfreq,\btime}(u_0;\omega_0)$ by
$\widetilde{\gamma}_{\bfreq,\btime}(u_0;\omega_0)$, see
Lemma~\ref{lem:approximation-exact-centering}. 

Finally the variance of the local Bartlett estimator is obtained by controlling
the variance of $\widetilde{\gamma}_{\bfreq,\btime}(u_0;\omega_0)$
(Lemma~\ref{lem:var_lead_term}), which in turn relies on a bound of 
$$
\cov\left(\left|\overline{\overline{N}}_T(h_1)\right|^2,\left|\overline{\overline{N}}_T(h_2)\right|^2\right)
$$ 
for test functions $h_1$ and $h_2$, which is derived in
Corollary~\ref{cor:cov-squares}.

\section{Numerical experiments}
\label{sec:numer-exper}

The numerical experiments in \cite{roueff-vonsachs-sansonnet2016} show that the
estimators $\widehat{m}_{\btime}$ and $\widehat{\gamma}_{\bfreq,\btime}$ are
able to reproduce the theoretical local mean density and local Bartlett spectrum
on simulated locally stationary Hawkes processes.  Here we consider a real data
set containing the transaction times of the two assets ESSI.PA (Essilor International SA) and TOTF.PA  (Total SA) over
61 days scattered in February, June and November 2013.
 We computed the local
mean density and Bartlett spectrum estimators, say $\widehat{m}_{\btime}^{(i)}$ and
$\widehat{\gamma}_{\bfreq,\btime}^{(i)}$ for each day $i\in\{1,\dots,61\}$ over the regular
opening hours of the Paris stock exchange market, that is between 9:00 a.m. and
5:30 p.m., Paris local time. The estimators are computed with the following
kernels~: $\ktime$ is the triangle kernel and $\kfreq$ is the Epanechnikov kernel, both
with supports $[-.5,.5]$. The chosen bandwidth parameters are given by
$$
\btime=.15\;,\quad \bfreq=.005 \,\mathrm{Hz}\;.
$$ 
The above unit for $\btime$ is absolute time, that is, 1 unit corresponds to the overall
duration of observation $T=8.5$ hours, hence in real time, 
$\btime=.15*8.5$ hours, which makes 1 hour, 16 minutes and 30 seconds. 
We thus obtain for each asset 61 local mean density and local Bartlett spectrum
estimates. Our goal here is to illustrate the time frequency analysis of such
point processes data sets. The obtained results are quite different from one
day to another, which can be expected on such real data. However, in the
following, we propose to comment on the local mean densities and Bartlett spectra
obtained for the two assets by averaging over the available 61 days,
$$
\widehat{m}_{\btime}^{(\tiny{Av})}=\frac1{61}\sum_{i=1}^{61}\widehat{m}_{\btime}^{(i)}
\quad\text{and}\quad
\widehat{\gamma}_{\bfreq,\btime}^{(\tiny{Av})}=\frac1{61}\sum_{i=1}^{61}\widehat{\gamma}_{\bfreq,\btime}^{(i)} \;.
$$  
Moreover we computed a \emph{Poisson-normalized} local Bartlett spectrum of
these averaged estimates defined by
$$
\widehat{\gamma}_{\bfreq,\btime}^{(\tiny{Pn})}(\omega;u)=\frac{2\pi\,\widehat{\gamma}_{\bfreq,\btime}^{(\tiny{Av})}(\omega;u)}{\widehat{m}_{\btime}^{(\tiny{Av})}(u)}\,\qquad u\in\rset,\omega\in\rset\;.
$$
Note that, in the case of a nonhomogeneous Poisson process, the local mean density
and Bartlett density satisfy 
$$
\locstat{\gamma}(\omega;u)=\frac{\locstat{m}_1(u)}{2\pi}\;.
$$
This is indeed given by~(\ref{eq:local-bartlett-spectral-density}) with a local
fertility function to be identically zero, $\locstat{p}(\cdot;u)\equiv0$. 

In Figures~\ref{fig:essi-av} and~\ref{fig:totf-av} we display the resulting
estimators for the two assets ESSI.PA and TOTF.PA, respectively. Note that the scaling of the
$y$-axis of the averaged local mean densities (top plots) is not the
same. The transaction rate of ESSI.PA evolves around 0.1 transactions per
second while that of the more liquid TOTF.PA around twice as much.  The local
Bartlett spectrum estimator $\widehat{\gamma}_{\bfreq,\btime}(\omega;u)$ is
computed over frequencies $\omega$ ranging between 0 and .1 Hz. This means that
only cyclic behaviors with periods larger than 10 seconds are visible. As for
the local mean density plots, note that the color scales of the averaged local
Bartlett spectra are different for the two assets. 

It is interesting to observe that, despite these differences of orders of
magnitude, the shapes of the averaged local mean densities and that of the
averaged local Bartlett spectra bear some similarities. Namely the averaged
mean density is decreasing in the morning, although a sharp increase occurs around
11:00 a.m. and a drop during the lunch break. It then increases steadily
during the afternoon with a sharper increase around 3:30 p.m., which
corresponds to the opening time of the New York stock exchange market. The
maximal averaged mean density is reached at the closing time. As for the averaged
Bartlett spectrum, it is interesting to note that the shape of the spectrum
along the frequencies varies significantly along the day. During the increases
of mean density preceding and following lunch break, the spectrum concentrates at
low frequencies, while the spectrum, although still favoring low frequencies,
is more balanced during the increase following the opening of the NYSE
market. Finally, it is interesting to observe that the Poisson-normalized
Bartlett spectra take the highest values during the two one hour long periods
surrounding the lunch break. It indicates that, in contrast to these two
periods, the increase of the (nonnormalized) Bartlett spectrum toward the end
of the day can be interpreted merely as a consequence of the increase of the
local mean density rather than a departure from the Poisson behavior.  Also
observe that the Poisson-normalized Bartlett spectra are always larger than 1.
Assuming a locally stationary Hawkes process for this data, this could be
interpreted, according to Formula~(\ref{eq:local-bartlett-spectral-density}), 
as 
$$
\left|1-\locstat{\fourierp}(\omega;u)\ \right| < 1 \;,
$$
where $\locstat{\fourierp}(\cdot;u)$ is the Fourier transform of the local
fertility function $\locstat{p}(\cdot;u)$.

A sensible conclusion of this analysis is that it advocates for more
involved models than a simple non-homogeneous Poisson process for transaction
data. In particular, locally stationary Hawkes processes as assumed in this
work are better adapted to such data sets, not only because the local Bartlett
spectrum is not constant along the frequencies but also because its shape
varies along the time, a feature that could not be obtained by using time varying
baseline intensity with a fertility function constant over the time, as used in
\cite{chen-hall-2013}. This conclusion is of particular interest in relation
with Examples~2.3 (iii) and (iv) described in \cite{dahlhaus16} for modeling
the volatility of high frequency financial data.

\begin{figure}[h]
  \centering
\includegraphics[width=.9\textwidth]{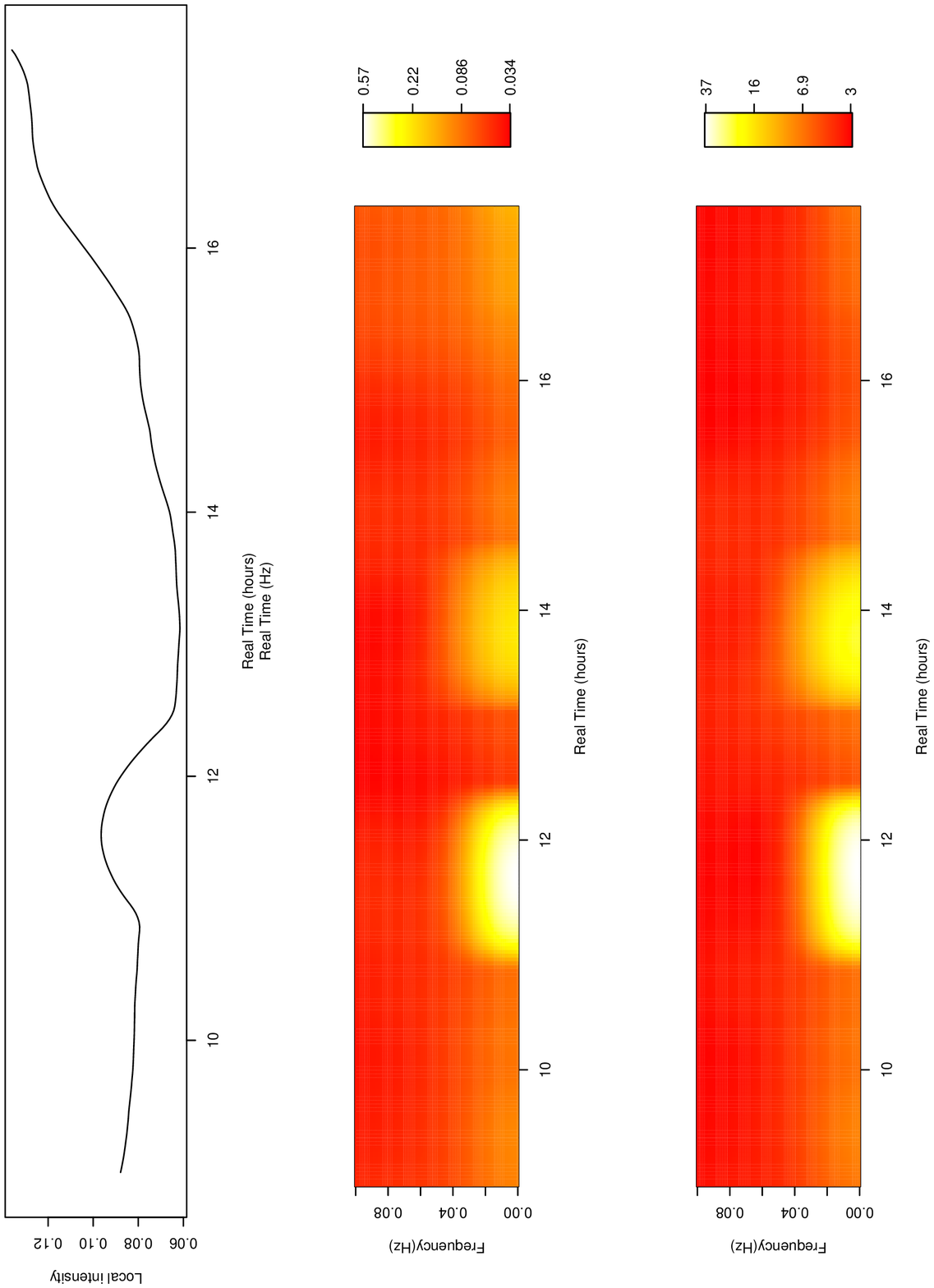} \\ 
\caption{Averaged local mean density (top) and Bartlett spectrum, nonnormalized (middle) and
  Poisson-normalized (bottom), for
  ESSI.PA transaction data.}
  \label{fig:essi-av}
\end{figure}

\begin{figure}[h]
  \centering
\includegraphics[width=.9\textwidth]{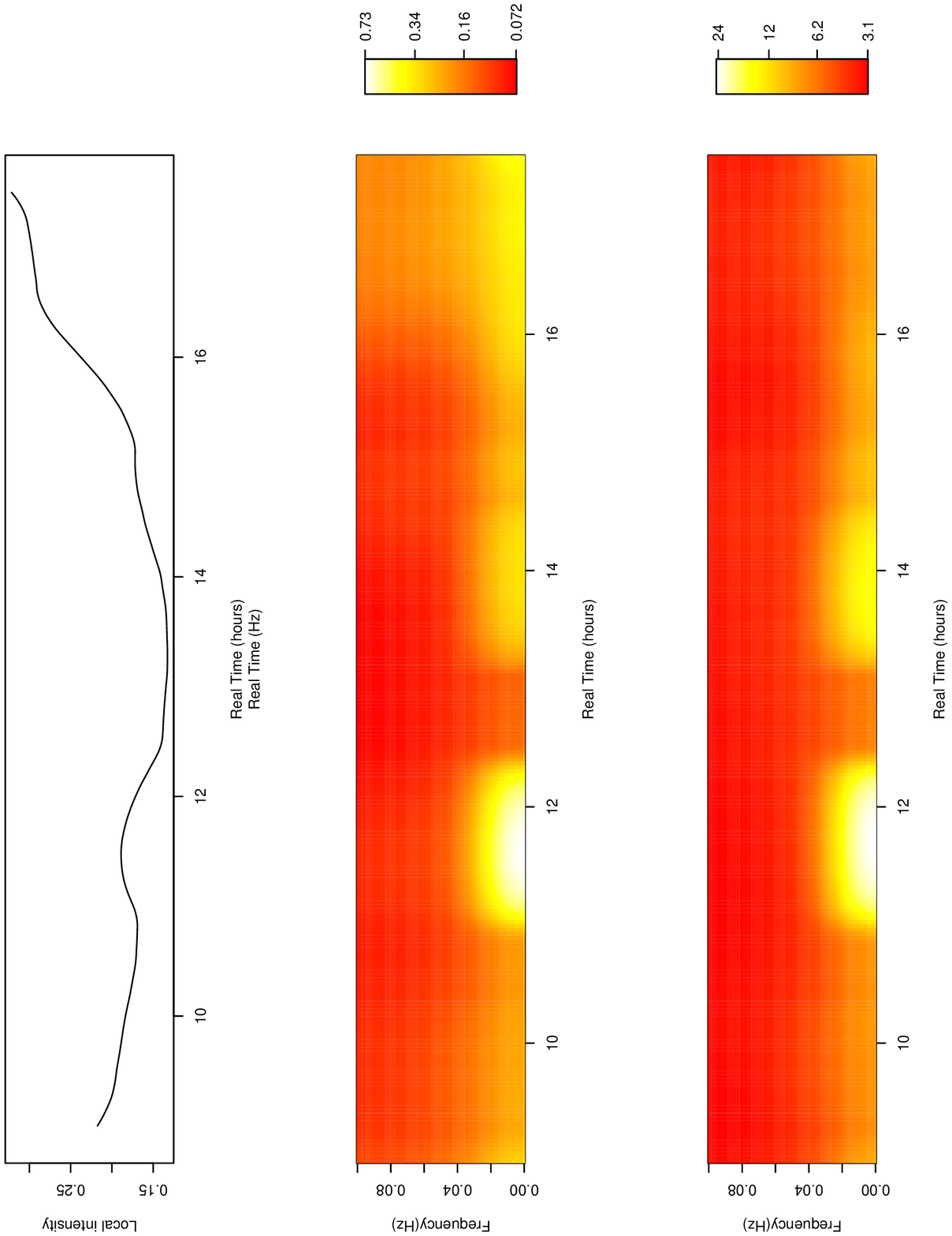} \\ 
\caption{Averaged local mean density (top), Bartlett spectrum, nonnormalized (middle)   and
  Poisson-normalized (bottom), for
  TOTF.PA transaction data.}
  \label{fig:totf-av}
\end{figure}

\section{Deviation bounds for non-stationary Hawkes processes}
\label{sec:dev-bounds}

We now derive new results required for treating the variance of the
studied 
estimators. In contrast to Poisson processes, we can not rely on the
independence of the process on disjoint sets.  To the best of our knowledge, the most
advanced results on deviation bounds of Hawkes processes are to be found in
\cite{reynaud-b-roy-2006} and only apply to stationary Hawkes processes with
compactly supported fertility functions.  Here we consider non-stationary
Hawkes processes with exponentially decreasing local fertility functions. The
generalization to non-stationary processes requires a specific approach
rather than a mere adaption of \cite{reynaud-b-roy-2006}.

Although we are here motivated by the study of the variance of the local mean density and
Bartlett estimators, we believe that the results contained in this section are
of broader interest, as they can serve more generally to understand the
dependence structure of non-stationary Hawkes processes.

\subsection{Setting}
\label{sec:setting}

Recall that~(\ref{eq:cond-density-nonstat}) is our minimal condition for
defining the non-stationary Hawkes process $N$ with immigrant intensity
function $\lambda_c(\cdot)$ and varying fertility function $p(\cdot;\cdot)$.
The cluster construction of a Hawkes process relies on conditioning on the
realization of a so-called \emph{center process}, $N_c$ a Poisson point process
(PPP) with intensity $\lambda_c$ on $\rset$, which describes spontaneous appearing of the immigrants. At
each \emph{center point} $t^c$ of $N_c$, a point process $N(\cdot|t^c)$ is
generated as a branching process whose conditional distribution given $N_c$
only depends on $t^c$ and is
entirely determined by the time varying fertility function $p(\cdot;\cdot)$.
This distribution is described through its conditional Laplace functional in
\cite[section~6.1]{roueff-vonsachs-sansonnet2016} under the additional
assumption
\begin{equation}
  \label{eq:NS-additional}
   \zzeta[\infty]<\infty   \quad\text{with}\quad \zzeta[q] =
   \sup_{t\in\mathbb{R}}\ellnorm{p(\cdot;t)}[q]\text{ for all $q\in[1,\infty]$}\;.
\end{equation}
The following result directly follows from these derivations.
A detailed proof is available in Appendix~\ref{sec:proof-prop-moment-bound} for sake of completeness.
\begin{proposition}\label{prop:uniform-moment-bound}
  Suppose that~(\ref{eq:cond-density-nonstat}) and~(\ref{eq:NS-additional}) hold and set
  \begin{equation}
    \label{eq:r1def-exp-bound}
r_1=\frac{-\log\zzeta}{2(1-\zzeta^{1/2}+\zzeta[\infty]\zzeta^{-1/2})}\;.
  \end{equation}
Then, for all $h:\rset\to\rset$ satisfying $\ellnorm{h}\leq1$ and
  $\ellnorm{h}[\infty]\leq1$,
  \begin{equation}
    \label{eq:exp-bound}
\esp\left[\rme^{(1-\zzeta^{1/2})r_1 |N(h)|}\right] \leq \exp\left(\ellnorm{\lambda_c}[\infty]
\,\zzeta^{-1/2}\,r_1\right) \;.
\end{equation}
Consequently, for all $q>0$, there exists a positive constant $B_q$ only
depending on $\ellnorm{\lambda_c}[\infty]$, $\zzeta$ and $\zzeta[\infty]$ such
that, for all $h:\rset\to\rset$ satisfying $\ellnorm{h}\leq1$ and
  $\ellnorm{h}[\infty]\leq1$,
  \begin{equation}
    \label{eq:moment-bound}
\left\|N(h)\right\|_q\leq B_q\;.
\end{equation}
\end{proposition}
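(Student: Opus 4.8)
The plan is to exploit the cluster representation of the non-stationary Hawkes process recalled just above the statement, $N=\sum_{t^c\in N_c}N(\cdot|t^c)$ with $N_c$ a PPP of intensity $\lambda_c$ and, conditionally on $N_c$, the clusters $N(\cdot|t^c)$ independent with a law (given the center $t^c$) entirely encoded by the conditional Laplace functional derivations of \cite{roueff-vonsachs-sansonnet2016}. I would first reduce to a moment generating functional: since $|h|$ dominates $\pm h$ we have $|N(h)|\le N(|h|)$ with $N(|h|)\ge0$, so it is enough to bound $\esp[\rme^{c\,N(|h|)}]$ at $c:=(1-\zzeta^{1/2})r_1>0$, and then~(\ref{eq:moment-bound}) follows from~(\ref{eq:exp-bound}) via the elementary inequality $x^q\le(q/\rme)^q\rme^{x}$ (for $x\ge0$, $q>0$), which gives $\|N(h)\|_q\le c^{-1}(q/\rme)\,\esp[\rme^{c\,N(|h|)}]^{1/q}$ and hence an explicit $B_q$ depending only on $q$, $\ellnorm{\lambda_c}[\infty]$, $\zzeta$ and $\zzeta[\infty]$, as required.

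Conditioning on $N_c$ and using independence of the clusters together with the Poisson exponential formula, one gets
$$
\esp\bigl[\rme^{c\,N(|h|)}\bigr]=\esp\Bigl[\prod_{t^c\in N_c}\Phi_{t^c}\Bigr]=\exp\Bigl(\int\bigl(\Phi_{t^c}-1\bigr)\lambda_c(t^c)\,\rmd t^c\Bigr),\qquad\Phi_{t^c}:=\esp\bigl[\rme^{c\,N(|h|\,|\,t^c)}\bigr]\ge1\,,
$$
where the identity is first obtained on truncated (bounded, compactly supported) test functions from the formula of \cite{roueff-vonsachs-sansonnet2016} and then extended to $|h|$ by monotone convergence. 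As $\lambda_c\le\ellnorm{\lambda_c}[\infty]$ and $\Phi_{t^c}-1\ge0$, the whole proposition reduces to the per-cluster estimate $\int(\Phi_{t^c}-1)\,\rmd t^c\le\zzeta^{-1/2}r_1<\infty$ at $c=(1-\zzeta^{1/2})r_1$.

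To prove this I would work with the branching recursion satisfied by $\Phi_v=1+\psi_v$, $\psi_v\ge0$: since the cluster rooted at $v$ is $v$ together with independent subclusters rooted at a Poisson process of intensity $p_v$ with $\int p_v\le\zzeta<1$ and $\ellnorm{p_v}[\infty]\le\zzeta[\infty]$, its finite generation-truncations obey $1+\psi_v=\rme^{c|h(v)|}\exp\!\bigl(\int\psi_w\,p_v(w)\,\rmd w\bigr)$, hence so does the monotone limit. From here I would run a coupled estimate: bound the center integral $\int\psi_v\,\rmd v$ using $\ellnorm{h}\le1$, the inequality $\rme^{a}-1\le a\rme^{a}$, the bound $\int\psi_w\,p_v(w)\,\rmd w\le\zzeta[\infty]\|\psi\|_1$ (this is where $\zzeta[\infty]$ enters, through the sup-norm of the offspring intensity) and $\int p_v(w)\,\rmd v\le\zzeta$ for fixed $w$ (by a change of variable), while controlling the exponential factors through $\ellnorm{h}[\infty]\le1$ and $\int p_v\le\zzeta$; the two controls interlock through the exponential, and a careful bookkeeping shows that the particular value of $r_1$ in~(\ref{eq:r1def-exp-bound}) and the rate $c=(1-\zzeta^{1/2})r_1$ are exactly what keep $c$ within the admissible range for both (the $\zzeta^{1/2}$ factors providing the margin below criticality) and make the resulting geometric series bound $\int(\Phi_{t^c}-1)\,\rmd t^c$ by $\zzeta^{-1/2}r_1$. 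I expect this coupled estimate to be the main obstacle: the cruder bound $N(|h|\,|\,t^c)\le(\text{cluster size})$, which uses only $\ellnorm{h}[\infty]\le1$, controls merely a subcritical-branching moment generating function (finite only up to rate $-\log\zzeta-(1-\zzeta)$) and is in general not enough to reach $(1-\zzeta^{1/2})r_1$, so the $L^1$-smallness of $h$ together with the bounded cluster density — equivalently, the fine structure captured by the conditional Laplace functional derivations of \cite{roueff-vonsachs-sansonnet2016} — must genuinely be used. Plugging the resulting bound into the product formula gives~(\ref{eq:exp-bound}), and~(\ref{eq:moment-bound}) then follows as explained above.
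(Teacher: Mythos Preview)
Your approach is the paper's: cluster decomposition, Poisson exponential formula for $N_c$, then a per-cluster Laplace bound obtained by a coupled $L^1$/$L^\infty$ estimate on the branching fixed point. The paper's proof is shorter only because it imports this last step as a black box from Corollary~12 and Eq.~(33) of \cite{roueff-vonsachs-sansonnet2016}: if $g(t,\zed)=\zed h(t)$ lies in the balls of radii $R_1=r_1(1-\zzeta\rme^{r_\infty})$ and $R_\infty=r_\infty-\rme^{r_\infty}\zzeta[\infty] r_1$, then the cluster log-Laplace $\Phi_g^\infty$ lies in the balls of radii $r_1$ and $r_\infty$; the choice $r_\infty=(-\log\zzeta)/2$ gives $\rme^{r_\infty}=\zzeta^{-1/2}$, and equating $R_1=R_\infty$ yields exactly~(\ref{eq:r1def-exp-bound}) and the rate $c=(1-\zzeta^{1/2})r_1$. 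Your sketch is a rederivation of that corollary by hand.

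One point to watch in your direct argument. With $p_v(w)=p(w-v;w)$ in the non-stationary setting, only the integral in the \emph{parent} variable is controlled by $\zzeta$: $\int p(w-v;w)\,\rmd v=\int p(u;w)\,\rmd u\le\zzeta$ (your ``by a change of variable''), whereas $\int p(w-v;w)\,\rmd w$ is in general \emph{not} bounded by $\zzeta$. Your $L^1$ step uses the correct integral, but the phrase ``controlling the exponential factors through $\ellnorm{h}[\infty]\le1$ and $\int p_v\le\zzeta$'' suggests you may be leaning on the wrong one for the pointwise control. The clean fix is to run the bootstrap on the log-Laplace $\phi_v=\log\Phi_v$ rather than $\psi_v=\Phi_v-1$: the recursion $\phi_v=c|h(v)|+\int(\rme^{\phi_w}-1)p(w-v;w)\,\rmd w$ gives, via $\rme^x-1\le x\rme^x$,
\[
\|\phi\|_1\le c+\zzeta\,\rme^{\|\phi\|_\infty}\|\phi\|_1\,,\qquad
\|\phi\|_\infty\le c+\zzeta[\infty]\,\rme^{\|\phi\|_\infty}\|\phi\|_1\,,
\]
where the second line uses only $p\le\zzeta[\infty]$ (no $\int p_v$ over the child). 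These two inequalities close exactly at $\|\phi\|_1\le r_1$, $\|\phi\|_\infty\le r_\infty$ for the values above, and then your target $\int(\Phi_{t^c}-1)\,\rmd t^c\le\rme^{r_\infty}\|\phi\|_1\le\zzeta^{-1/2}r_1$ follows. With this adjustment your argument is complete and matches the paper's.
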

The moment bound~(\ref{eq:moment-bound}) will be useful but far from sufficient
to bound the variance of our estimators efficiently. To see why, let us suppose
temporarily that $N$ is a homogeneous Poisson process with unit rate and
consider $h=n^{-1}\1_{[0,n]}-1$ for some positive integer $n$. Then the above
bound with $p=2$ gives $\var(N(h))\leq B_2^2$ although we know that in this very
special case we have $\var(N(h))=n^{-1}$, hence the bound is not sharp at all
as $n\to\infty$. In the following we provide new deviation bounds applying to
non-stationary Hawkes processes which allows one to recover the expected order
of magnitude for the variance. To this end we rely on stronger conditions than the
ones used in \cite{roueff-vonsachs-sansonnet2016}.

Define moreover, using the exponentially weighted norm notation
in~(\ref{eq:exp-weight-norm}), for all $d\geq0$, and $q\in[1,\infty]$,
$$
\zetaexp[q]{d}=\sup_{t\in\rset}\ellexpnorm{p(\cdot;t)}[q]\;.
$$
We strengthen the basic
conditions~(\ref{eq:locally-stat-hawk-finite-intensity})
and~(\ref{eq:NS-additional}) into the following one.
\begin{assumption}{NS}
\item \label{ass:non-stat-zeta-exp} We have
  $\ellnorm{\lambda_c}[\infty]<\infty$. Moreover,
for all $t\in\rset$, $p(\cdot;t)$ is supported on $\rset_+$ and
there exists $d>0$ such that $\zetaexp{d}<1$ and $\zetaexp[\infty]{d}<\infty$.
\end{assumption}
The version of~\ref{ass:non-stat-zeta-exp} in the locally
stationary case is \ref{ass:locally-stat-hawk-finite-expbounds} in the sense that the locally stationary Hawkes process
$(N_T)_{T\geq1}$ satisfies \ref{ass:locally-stat-hawk-finite-expbounds} if and
only if, for all $T\geq1$, the non-stationary Hawkes process $N_T$ satisfied
\ref{ass:non-stat-zeta-exp}. Therefore all the results below relying on 
\ref{ass:non-stat-zeta-exp} apply to the locally stationary Hawkes processes
satisfying  \ref{ass:locally-stat-hawk-finite-expbounds}.
We recall that this assumption means that the local fertility functions satisfy some uniform
exponential decreasing.

\subsection{New deviation bounds}
\label{sec:new-deviation-bounds}
The deviations bounds are based on the following exponential bound control on
component point processes $N(\cdot|t^c)$ defined above. 
\begin{proposition}\label{prop:Delta-exp-moment}
  Suppose that~\ref{ass:non-stat-zeta-exp} holds for some $d>0$. Then we have,
  for all $a\in(0,d)$, for all $t^c\in\Rset$,
  \begin{equation}
    \label{eq:Delta-exp-moment-preparation}
\esp\left[\int_{[t,\infty)}\rme^{a (s-t)}\; N(\rmd s|t^c)\right]\leq C_a
\quad\text{with}\quad C_a:= 1+\frac{\zetaexp[\infty]{d}}{(d-a)(1-\zetaexp{d})}\;.
  \end{equation}
\end{proposition}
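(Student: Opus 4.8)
The plan is to exploit the branching (Poisson cluster) representation of the component process $N(\cdot\,|\,t^c)$ recalled in \cite[Section~6.1]{roueff-vonsachs-sansonnet2016}, together with a generation-by-generation estimate of the expected $\rme^{a\,\cdot}$-weighted mass, the needed contraction being supplied by the exponential-decay Assumption~\ref{ass:non-stat-zeta-exp}. Recall that $N(\cdot\,|\,t^c)$ is the total progeny of a single ancestor at $t^c$, in which a point at position $w$ produces its offspring as a Poisson process whose lag intensity from $w$ is governed by $p(\cdot\,;\cdot)$; since $p(\cdot\,;u)$ is supported on $\Rset_+$, the whole cluster lives on $[t^c,\infty)$ and decomposes into generations, $N(\cdot\,|\,t^c)=\sum_{n\ge0}N_n(\cdot\,|\,t^c)$ with $N_0(\cdot\,|\,t^c)=\delta_{t^c}$. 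Because for $t\ge t^c$ one has $\int_{[t,\infty)}\rme^{a(s-t)}\,N(\rmd s\,|\,t^c)\le\int_{[t^c,\infty)}\rme^{a(s-t^c)}\,N(\rmd s\,|\,t^c)$ (shrink the domain, then use $\rme^{a(s-t)}\le\rme^{a(s-t^c)}$), it is enough to bound $\psi(t^c):=\esp\bigl[\int\rme^{a(s-t^c)}\,N(\rmd s\,|\,t^c)\bigr]$ uniformly in $t^c$.

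First I would write $\psi_n(u):=\esp\bigl[\int\rme^{a(s-u)}\,N_n(\rmd s\,|\,u)\bigr]$, so that $\psi_0\equiv1$ and, by nonnegativity, $\psi=\sum_{n\ge0}\psi_n$. The branching property yields, for every $n\ge0$,
\[
\psi_{n+1}(u)=\esp\Bigl[\,\sum_{c}\rme^{a(c-u)}\psi_n(c)\Bigr]=\int_0^\infty\rme^{ar}\,\psi_n(u+r)\,p(r;\,\cdot\,)\,\rmd r ,
\]
the sum running over the first generation of the ancestor at $u$ (each of its points rooting an independent subcluster distributed as $N(\cdot\,|\,c)$) and the last step being Campbell's formula for that Poisson first generation. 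I would then invoke two consequences of Assumption~\ref{ass:non-stat-zeta-exp}, both using $0<a<d$ and the causality of $p$: \emph{(i)} $\int_0^\infty\rme^{ar}p(r;u)\,\rmd r\le\int_0^\infty\rme^{dr}p(r;u)\,\rmd r\le\zetaexp{d}<1$ for every $u$; and \emph{(ii)} $p(r;u)\le\zetaexp[\infty]{d}\,\rme^{-dr}$ for a.e.\ $r\ge0$ uniformly in $u$, whence $\int_0^\infty\rme^{ar}p(r;u)\,\rmd r\le\zetaexp[\infty]{d}/(d-a)$. Bounding $\psi_n(u+r)$ by $\sup_v\psi_n(v)$ in the recursion, and applying \emph{(ii)} to the generation emanating from the ancestor and \emph{(i)} to every subsequent transition (the relevant second argument of $p$ being held fixed under that integral), I obtain $\psi_1(u)\le\zetaexp[\infty]{d}/(d-a)$ and, by induction, $\psi_n(u)\le\frac{\zetaexp[\infty]{d}}{d-a}\,(\zetaexp{d})^{n-1}$ for all $n\ge1$. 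Summing the geometric series in $\zetaexp{d}$,
\[
\psi(t^c)\le 1+\frac{\zetaexp[\infty]{d}}{d-a}\sum_{n\ge1}(\zetaexp{d})^{n-1}=1+\frac{\zetaexp[\infty]{d}}{(d-a)(1-\zetaexp{d})}=C_a ,
\]
which is the announced bound; the interchanges of $\esp$ with the sum over generations and with the Poisson integrals are justified by monotone convergence.

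The main obstacle — and the only place where the full force of $\zetaexp{d}<1$ is needed rather than merely $\zetaexp[\infty]{d}<\infty$ — is that the weight $\rme^{a(s-t^c)}$ grows multiplicatively along each genealogical line, so estimating \emph{every} generation with the crude bound \emph{(ii)} would give only a per-generation factor $\zetaexp[\infty]{d}/(d-a)$, which the hypotheses do not force below $1$, leaving the series over generations potentially divergent. The way out is to absorb $\rme^{ar}$ into $\rme^{dr}$ at each branching transition and recognise the resulting quantity as the exponentially weighted $L^1$-mass of $p(\cdot\,;u)$, a genuine contraction factor $\zetaexp{d}<1$; only the single step from the ancestor, where this absorption is unavailable, is paid for with the weaker bound \emph{(ii)}, contributing the prefactor $\zetaexp[\infty]{d}/(d-a)$. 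I expect the only genuine bookkeeping to be extracting the branching recursion above cleanly from the conditional Laplace-functional description in \cite[Section~6.1]{roueff-vonsachs-sansonnet2016}; everything else is the geometric summation.
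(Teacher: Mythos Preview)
Your generation-by-generation decomposition and the recursion
\[
\psi_{n+1}(u)=\int_0^\infty \rme^{ar}\,\psi_n(u+r)\,p(r;\,\cdot\,)\,\rmd r
\]
are correct, but the parenthetical ``the relevant second argument of $p$ being held fixed under that integral'' is where the argument breaks. Under the paper's convention the time-varying fertility enters the conditional intensity as $p(t-s;t)$, i.e.\ the second slot records the \emph{child's} time, not the parent's. Hence the first-generation offspring of an ancestor at $u$ form a Poisson process with intensity $v\mapsto p(v-u;v)$, and the recursion reads
\[
\psi_{n+1}(u)=\int_0^\infty \rme^{ar}\,\psi_n(u+r)\,p(r;u+r)\,\rmd r\;.
\]
Replacing $\psi_n(u+r)$ by $\sup_v\psi_n(v)$ leaves you with $\int_0^\infty \rme^{ar}p(r;u+r)\,\rmd r$, in which the second argument of $p$ varies with $r$. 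Your bound~(i), namely $\sup_v\int_0^\infty \rme^{ar}p(r;v)\,\rmd r\le\zetaexp{d}$, does not control this integral: the supremum is taken \emph{after} integrating in $r$ with the second argument frozen, and in general $\int_0^\infty \rme^{ar}p(r;u+r)\,\rmd r$ can exceed $\zetaexp{d}$ (take, e.g., a fertility whose mass in $r$ is concentrated on an interval that slides with the second argument). The only uniform bound available here is~(ii), giving $\zetaexp[\infty]{d}/(d-a)$, which, as you yourself note, need not be below $1$. Your induction therefore does not close.

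The paper's proof avoids this obstruction by tracking, instead of $\sup_u\psi_n(u)$, an exponentially down-weighted $L^1$ quantity $\int_0^\infty [\mathcal{E}_{g_t}^n(g_t)](t+v)\,\rme^{-dv}\,\rmd v$. After one application of $\mathcal{E}_{g_t}$ and a Fubini swap, the inner integral becomes $\int_0^{u-t}\rme^{dw}p(w;u)\,\rmd w$ with the second argument $u$ now the \emph{outer} variable, hence fixed, so that the genuine contraction factor $\zetaexp{d}<1$ applies. The single use of $\zetaexp[\infty]{d}$ is reserved for the final pointwise evaluation at the root, which is exactly the role you intended for it. Your strategy is thus morally right but needs the weighted-$L^1$ bookkeeping rather than a sup-norm induction.
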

\begin{proof}
Let $g:\rset\to\rset_+$ and define, for all  $h:\rset\to\rset_+$,
$$
[\mathcal{E}_g(h)](s)=g(s)+\int h(u)p(u-s;u)\;\rmd u \;.
$$
Following \cite[Proposition~7 and Eq.~(34)]{roueff-vonsachs-sansonnet2016}, we
have that, for all $t\in\rset$,
\begin{equation}
  \label{eq:expectation-limit}
\esp\left[N(g|t^c)\right]=\lim_{n\to\infty}[\mathcal{E}^n_g(g)](t) \;,
\end{equation}
where $\mathcal{E}^n_g$ denotes the $n$-th self-composition of
$\mathcal{E}_g$. In the following, we take some $a\in(0,d)$ and set
$$
g_t(s)= \rme^{a (s-t)} \1_{[t,\infty)}(s)=g_0(s-t) \;.
$$
Observe that, using that $p(;u)$ is supported on $[0,\infty)$
under~\ref{ass:non-stat-zeta-exp}, for all  $h:\rset\to\rset_+$,
\begin{align*}
[\mathcal{E}_{g_t}(h)](t)
  &\leq g_t(t)+\int_{t}^\infty h(u)
  p(u-t;u)\;\rmd u   \\
  &\leq1+
\int_{t}^\infty h(u)\rme^{d (t-u)}
  \rme^{d (u-t)}p(u-t;u)\;\rmd u\\
&\leq1+\int_{t}^\infty h(u)\rme^{d (t-u)}
\zetaexp[\infty]{d} \;\rmd u\\
&= 1 +
\zetaexp[\infty]{d}\ellexpnorm{h(t+\cdot)}[1][-d] \;.
\end{align*}
Applying this to $h=\mathcal{E}_{g_t}^{n-1}(g_t)$ we get, for all $n\geq1$,
\begin{align}\label{eq:bound-using-d1}
[\mathcal{E}_{g_t}^n(g_t)](t)
\leq 1+
\zetaexp[\infty]{d}\ellexpnorm{[\mathcal{E}_{g_t}^{n-1}(g_t)](t+\cdot)}[1][-d]
\end{align}
Similarly, we also get that, for all  $h:\rset\to\rset_+$,
\begin{align*}
  \ellexpnorm{[\mathcal{E}_{g_t}(h)](t+\cdot)}[1][-d]
&\leq \ellexpnorm{g_0}[1][-d]+
\zetaexp{d}\ellexpnorm{h(t+\cdot)}[1][-d] \;.
\end{align*}
Observing that
$\ellexpnorm{g_0}[1][-d]=(d-a)^{-1}$ and iterating the last inequality, we
finally obtain that, for all $n\geq1$,
$$
\ellexpnorm{[\mathcal{E}_{g_t}^n(g_t)](t+\cdot)}[1][-d]\leq
\frac1{d-a}\left(1+\zetaexp{d}+\dots+\zetaexp{d}^n\right)\leq
  \frac1{(d-a)(1-\zetaexp{d})}\;.
$$
Inserting this bound in~(\ref{eq:bound-using-d1}) and
letting $n$ go to $\infty$ as in~(\ref{eq:expectation-limit}) with $g=g_t$, we get the claimed result.
\end{proof}

Define the past and future $\sigma$-fields at time $t$ respectively as
\begin{align*}
\mathcal{F}_t&=\sigma\left(N_c(A),N(B|t^c)\;:\; A\in\mathbb{B}((-\infty,t]),
B\in\mathbb{B}(\rset), t^c\leq t\right)  \\
&\supset\sigma\left(N(A)\;:\; A\in\mathbb{B}((-\infty,t])\right)
\end{align*}
and
$$
\mathcal{G}_t=\sigma\left(N(A)\;:\; A\in\mathbb{B}((t,-\infty))\right)\;.
$$
The following result provides a uniform exponential control on the time-dependence of $N$.
\begin{proposition}\label{prop:cond-exp-control}
  Suppose that~\ref{ass:non-stat-zeta-exp} holds for some $d>0$ and that
  $\lambda_c$ is bounded.  Let $p\in[1,\infty)$, $t< u$ and $Y$ be a centered
  $L^1$ $\mathcal{G}_u$-measurable random variable. Then, for all $a\in(0,d)$,
  for all $q\in(p,\infty]$, if $Y$ is $L^{q}$,
$$
\left\|\esp[Y|\mathcal{F}_t]\right\|_p\leq \|Y\|_{q} \,
\left(\ellnorm{\lambda_c}[\infty]\,C_a\,a^{-1}
\rme^{-a(u-t)}\right)^{1/p-1/q} \;,
$$
where $C_a$ is defined in~(\ref{eq:Delta-exp-moment-preparation}).
\end{proposition}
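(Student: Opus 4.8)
\emph{Proof strategy.} The plan is to use the cluster construction of $N$ recalled in Section~\ref{sec:setting} to decouple the points of $N$ after time $u$ from $\mathcal{F}_t$, up to a rare event whose probability I will estimate via Proposition~\ref{prop:Delta-exp-moment}. First I would decompose $N=\sum_{t^c}N(\cdot|t^c)$ into the clusters attached to the centre points $t^c$ of $N_c$ and split the centres according to $t^c\le t$ or $t^c>t$: the sub-process $\sum_{t^c\le t}N(\cdot|t^c)$ is $\mathcal{F}_t$-measurable, while $\widetilde N_0:=\sum_{t^c>t}N(\cdot|t^c)$ is independent of $\mathcal{F}_t$, by the centre process being Poisson (hence its restrictions to $(-\infty,t]$ and $(t,\infty)$ are independent) together with the conditional independence of the clusters given $N_c$ used in \cite[Section~6.1]{roueff-vonsachs-sansonnet2016}. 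Since $Y$ is $\mathcal{G}_u$-measurable it is a measurable function $Y=F\!\left(N|_{(u,\infty)}\right)$ of the restriction $N|_{(u,\infty)}$, and the latter decomposes as $N|_{(u,\infty)}=R+\widetilde N$ with $R:=\sum_{t^c\le t}N(\cdot|t^c)|_{(u,\infty)}$ ($\mathcal{F}_t$-measurable) and $\widetilde N:=\widetilde N_0|_{(u,\infty)}$ (independent of $\mathcal{F}_t$). I would then single out the event $E:=\{R=0\}\in\mathcal{F}_t$ that no descendant of a centre located at or before time $t$ occurs after time $u$.

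Next I would carry out the interpolation. On $E$ one has $N|_{(u,\infty)}=\widetilde N$, so $Y\1_E=F(\widetilde N)\1_E$; pulling out the $\mathcal{F}_t$-measurable indicator and using $\widetilde N\perp\mathcal{F}_t$ gives $\esp[Y|\mathcal{F}_t]\,\1_E=\esp[F(\widetilde N)]\,\1_E$, so $\esp[Y|\mathcal{F}_t]$ is constant on the atom $E$, equal to some $c$ with $c\,\pr(E)=\esp[Y\1_E]=-\esp[Y\1_{E^c}]$ (the last equality because $\esp[Y]=0$). Since $E$ and $E^c$ are disjoint, $\left\|\esp[Y|\mathcal{F}_t]\right\|_p^p=|c|^p\pr(E)+\esp\!\left[\left|\esp[Y|\mathcal{F}_t]\right|^p\1_{E^c}\right]$. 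For the first summand, Hölder gives $|c|\,\pr(E)=|\esp[Y\1_{E^c}]|\le\|Y\|_q\,\pr(E^c)^{1-1/q}$; for the second, conditional Jensen followed by Hölder gives $\esp[|\esp[Y|\mathcal{F}_t]|^p\1_{E^c}]\le\esp[|Y|^p\1_{E^c}]\le\|Y\|_q^p\,\pr(E^c)^{1-p/q}$. Collecting the two contributions and using $\pr(E)\ge1-\pr(E^c)$ leads to $\left\|\esp[Y|\mathcal{F}_t]\right\|_p\le\|Y\|_q\,\pr(E^c)^{1/p-1/q}$ (the bound being trivially true whenever $\pr(E^c)\ge1$, and the $\pr(E)$-factors being what is left to bookkeep for the multiplicative constant).

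Finally I would estimate $\pr(E^c)$. By Markov's inequality $\pr(E^c)\le\esp\!\left[R(\Rset)\right]$, and, by the Poisson character of $N_c$ and the conditional independence of the clusters, $\esp[R(\Rset)]=\int_{-\infty}^{t}\lambda_c(s)\,\esp\!\left[N((u,\infty)|s)\right]\rmd s$. Applying Proposition~\ref{prop:Delta-exp-moment} with the interval based at the centre $s$ gives $\esp\!\left[\int_{[s,\infty)}\rme^{a(r-s)}N(\rmd r|s)\right]\le C_a$; since $p(\cdot;\cdot)$ is causal the cluster $N(\cdot|s)$ lives on $[s,\infty)$, so for $s\le t<u$ one has $\1_{(u,\infty)}(r)\le\rme^{-a(u-s)}\rme^{a(r-s)}\1_{[s,\infty)}(r)$ and therefore $\esp[N((u,\infty)|s)]\le C_a\,\rme^{-a(u-s)}$. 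Integrating, $\pr(E^c)\le\ellnorm{\lambda_c}[\infty]\,C_a\int_{-\infty}^{t}\rme^{-a(u-s)}\rmd s=\ellnorm{\lambda_c}[\infty]\,C_a\,a^{-1}\rme^{-a(u-t)}$, and substituting into the interpolation bound gives the statement. The hard part will be the first step: making rigorous the decoupling $N|_{(u,\infty)}=R+\widetilde N$ with $\widetilde N\perp\mathcal{F}_t$, and arguing that $\esp[Y|\mathcal{F}_t]$ depends on $\mathcal{F}_t$ only through $R$ and is hence constant on $E$; this rests on the conditional Laplace functional description of the cluster construction in \cite[Section~6.1]{roueff-vonsachs-sansonnet2016}. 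Once that is in place, only the elementary interpolation above and the first-moment bound of Proposition~\ref{prop:Delta-exp-moment} remain.
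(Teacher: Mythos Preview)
Your approach is essentially the paper's: both use the cluster decomposition to isolate the event that no centre at or before $t$ spawns a descendant after $u$, apply H\"older on its complement, and bound the probability of that complement via Proposition~\ref{prop:Delta-exp-moment}. Two minor differences are worth noting. First, the paper simply asserts $\esp[Y\mid\mathcal{F}_t]=0$ on $E=\{\overline{\Delta}_t<u\}$ and hence writes $\|\esp[Y\mid\mathcal{F}_t]\|_p=\|\esp[Y\mid\mathcal{F}_t]\1_{E^c}\|_p$ directly; you more carefully argue that $\esp[Y\mid\mathcal{F}_t]$ equals a constant $c=\esp[F(\widetilde N)]$ on $E$ and bound $|c|$ via $c\,\pr(E)=-\esp[Y\1_{E^c}]$, at the price of a residual multiplicative constant in the final inequality (harmless for all downstream uses, which are phrased with $\lesssim$). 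Second, for $\pr(E^c)$ the paper introduces the cluster diameter $\Delta_{t^c}$, bounds its exponential moment, and combines the marked-Poisson representation of $\{(t^c_k,\Delta_{t^c_k})\}$ with exponential Markov, whereas you use first-moment Markov $\pr(E^c)\le\esp[R(\rset)]$ and invoke Proposition~\ref{prop:Delta-exp-moment} directly on $\esp[N((u,\infty)\mid s)]$; both routes produce exactly the same bound $\ellnorm{\lambda_c}[\infty]\,C_a\,a^{-1}\rme^{-a(u-t)}$.
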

\begin{proof}
In the following, we denote by $t^c_k$ the points of the Poisson process
$N_c$, that is,
$$
N_c=\sum_k\delta_{t^c_k}\;.
$$
  Define
$$
\Delta_{t^c}=\inf\left\{\delta>0~:~N([t^c+\delta,\infty)|t^c)=0\right\}\;.
$$
In other words, $\Delta_{t^c}$ is the size of the cluster $N(\cdot|t^c)$, that is the
distance between the most left point (which is $t^c$) and most right point. Since
$\Delta_{t^c}$ is a point among the points of $N(\cdot|t^c)$), we have
$$
\rme^{a \Delta_{t^c}}\leq\int_{[t,\infty)}\rme^{a (s-t)}\; N(\rmd s|t^c) \;,
$$
and a direct
consequence of Proposition~\ref{prop:Delta-exp-moment} is that, for all $a\in(0,d)$,
\begin{equation}
  \label{eq:Delta_t_exp_moment}
  \esp[\rme^{a \Delta_{t^c}}]\leq C_a\;.
\end{equation}

Now let us define the position of the last point generated by all clusters
started before time $t$, namely,
$$
\overline{\Delta}_t=\sup\left\{t^c_k+\Delta_{t^c_k}~:~t_k^c\leq
    t\right\}\;.
$$
We observe that $\overline{\Delta}_t$ is $\mathcal{F}_t$-measurable. Moreover,
if $t< u$ and  $Y$ is a centered $L^1$ $\mathcal{G}_u$-measurable random
variable, then we have $\esp[ Y|\mathcal{F}_t]=0$ on
$\{\overline{\Delta}_t<u\}$. The Hölder inequality then yields for $1\leq p<q\leq\infty$
$$
\left\|\esp[Y|\mathcal{F}_t]\right\|_p=
\left\|\esp[Y|\mathcal{F}_t]\1_{\{\overline{\Delta}_t\geq u\}}\right\|_p
\leq \left\|\esp[Y|\mathcal{F}_t]\right\|_q\left(\pr(\overline{\Delta}_t\geq u)\right)^{1/p-1/q}\;.
$$
Since $\left\|\esp[Y|\mathcal{F}_t]\right\|_q\leq \|Y\|_q$, it only remains to
prove that
\begin{equation}
  \label{eq:remains-forgetting-exponential}
  \pr(\overline{\Delta}_t\geq u)\leq C_0\rme^{-\lambda_0(u-t)}\;.
\end{equation}
Observe that $M=\sum_{k}\delta_{t^c_k,\Delta_{t^c_k}}$ is a marked Poisson
point process such that, given $N_c$, the marks $\Delta_{t^c_k}$ are
independent and for each $k$ the conditional distribution of $\Delta_{t^c_k}$
only depends on $t_k^c$. Hence $M$ is a Poisson point process with points
valued in $\rset\times\rset_+$ and
$$
\{\overline{\Delta}_t\geq u\} =
\left\{M(\{(t^c,\delta)\in(-\infty,t]\times\rset_+~:~t_c+\delta\geq u\})>0\right\}\;.
$$
We thus get that
$$
  \pr(\overline{\Delta}_t\geq u)=1-\exp\left(-\int_{-\infty}^{t} \pr(t_c+\Delta_{t^c}\geq
    u)\lambda_c(t^c)\;\rmd t^c\right)\leq
\ellnorm{\lambda_c}[\infty]\int_{-\infty}^{t}
\pr(\Delta_{t^c}\geq u-t_c)\;\rmd t^c\;,
$$
where we used that $1-\rme^{-x}\leq x$ for all $x\geq0$.
Using~(\ref{eq:Delta_t_exp_moment}) and the exponential Markov inequality, it follows that
$$
\pr(\overline{\Delta}_t\geq u)\leq
\ellnorm{\lambda_c}[\infty]\,C_a\,
\int_{-\infty}^{t} \rme^{a(t^c-u)}\;\rmd t^c
=\ellnorm{\lambda_c}[\infty]\,C_a\,
a^{-1}\rme^{a(t-u)}\;.
$$
\end{proof}

We can now derive a Burkhölder-type inequality.

\begin{proposition}\label{prop:burkh-ineq}
  Suppose that~\ref{ass:non-stat-zeta-exp} holds for some $d>0$.  Let
  $p\in[2,\infty)$. Then there exists a positive constant $B_p$ such that, for
  all bounded functions $h$ with support included in $[j,j+n]$ for some
  $j\in\zset$ and $n\in\nset$,
$$
\left\|N(h)-\esp[N(h)]\right\|_p\leq A\;\ellnorm{h}[\infty] \;\sqrt{n}\;.
$$
where $A$ is a positive constant only depending on $d$,
$\ellnorm{\lambda_c}[\infty]$, $\zzeta$, $\zzeta[\infty]$,
$\zetaexp[\infty]{d}$ and $\zetaexp{d}$, e.g., for any $a\in(0,d)$ and $q>p$,
$$
A:=(B_1+B_p)(B_1+B_q)\left(\ellnorm{\lambda_c}[\infty]\,C_a\,a^{-1}\right)^{1/p-1/q}
\frac{\rme^{-a(1/p-1/q)}}{1-\rme^{-a(1/p-1/q)}}\;,
$$
where $B_p$ is defined in Proposition~\ref{prop:uniform-moment-bound} and $C_a$ in~(\ref{eq:Delta-exp-moment-preparation}).
\end{proposition}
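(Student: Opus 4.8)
The plan is to cut $h$ into $n$ successive unit-length blocks, control the moments of the corresponding partial functionals via Proposition~\ref{prop:uniform-moment-bound}, and then recover the $\sqrt n$ rate through a martingale-difference decomposition combined with the exponential forgetting estimate of Proposition~\ref{prop:cond-exp-control}. I may assume $\ellnorm{h}[\infty]>0$ (the other case being trivial) and, by homogeneity of the desired bound, that $\ellnorm{h}[\infty]=1$. First I would write $h=\sum_{i=0}^{n-1}h_i$ with $h_i:=h\,\1_{[j+i,j+i+1)}$, so that $N(h)-\esp[N(h)]=\sum_{i=0}^{n-1}X_i$ where $X_i:=N(h_i)-\esp[N(h_i)]$. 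Since each $h_i$ is supported on an interval of length one and satisfies $\ellnorm{h_i}[\infty]\le1$ and $\ellnorm{h_i}\le1$, Proposition~\ref{prop:uniform-moment-bound} gives $\|N(h_i)\|_r\le B_r$, hence $\|X_i\|_r\le B_1+B_r$ for every $r\ge1$. Moreover $X_i$ is a measurable function of $N$ restricted to $[j+i,j+i+1)$, so it is $\mathcal F_{j+i+1}$-measurable and also $\mathcal G_v$-measurable for every $v<j+i$.

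Next I would split $\sum_iX_i$ into martingale differences along the increasing filtration $(\mathcal F_{j+k+1})_{k\in\zset}$. Since $\|\esp[X_i|\mathcal F_t]\|_p\to0$ as $t\to-\infty$ by Proposition~\ref{prop:cond-exp-control}, each $X_i$ admits the $L^p$-convergent expansion $X_i=\sum_{k\le i}\bigl(\esp[X_i|\mathcal F_{j+k+1}]-\esp[X_i|\mathcal F_{j+k}]\bigr)$; summing over $i$ and exchanging the (absolutely convergent) double sum gives
$$
N(h)-\esp[N(h)]=\sum_{k\le n-1}D_k,\qquad D_k:=\sum_{i=\max(k,0)}^{n-1}\bigl(\esp[X_i|\mathcal F_{j+k+1}]-\esp[X_i|\mathcal F_{j+k}]\bigr),
$$
where $(D_k)$ is a martingale difference sequence relative to $(\mathcal F_{j+k+1})_k$ and $D_k=0$ for $k\ge n$.

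To bound $\|D_k\|_p$ I would treat the $O(1)$ summands with index $i$ close to $k$ by the crude estimate $2\|X_i\|_p\le2(B_1+B_p)$, and, for $i$ far above $k$, use the $\mathcal G_v$-measurability of $X_i$ (with $v$ arbitrarily close to $j+i$ from below) together with Proposition~\ref{prop:cond-exp-control}, for a fixed $q>p$ and some $a\in(0,d)$, to get $\|\esp[X_i|\mathcal F_{j+k}]\|_p\lesssim\|X_i\|_q\,\rme^{-a(1/p-1/q)(i-k)}$, and likewise with $\mathcal F_{j+k+1}$. Summing the resulting geometric series in $i$ yields $\|D_k\|_p\lesssim1$ for $0\le k\le n-1$ and $\|D_k\|_p\lesssim\rme^{a(1/p-1/q)k}$ for $k<0$. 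Since $p\ge2$, the Burkhölder--Davis--Gundy inequality followed by Minkowski's inequality in $L^{p/2}$ then gives
$$
\bigl\|N(h)-\esp[N(h)]\bigr\|_p\le C_p\left\|\Bigl(\sum_k D_k^2\Bigr)^{1/2}\right\|_p\le C_p\Bigl(\sum_k\|D_k\|_p^2\Bigr)^{1/2}\lesssim\sqrt n\,,
$$
because $\sum_{k=0}^{n-1}\|D_k\|_p^2\lesssim n$ while $\sum_{k<0}\|D_k\|_p^2$ is a convergent geometric series. Reinstating the factor $\ellnorm{h}[\infty]$ and keeping track of the constants coming from Propositions~\ref{prop:uniform-moment-bound} and~\ref{prop:cond-exp-control} and from the geometric summations produces the announced expression for $A$.

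The hard part will be exactly this last step: upgrading the trivial triangle-inequality bound $\|N(h)-\esp[N(h)]\|_p\le\sum_i\|X_i\|_p\lesssim n$ to the sharp $\sqrt n$ rate, which forces one to exploit both the martingale structure and the quantitative mixing estimate of Proposition~\ref{prop:cond-exp-control}. A secondary technical point requiring care is to ensure that every conditional-expectation increment to which the forgetting bound is applied is genuinely $\mathcal G$-measurable, so that the exponential decay in $|i-k|$ is effectively available; this is where the treatment of the block endpoints and of the few near-diagonal terms $i\approx k$ comes in.
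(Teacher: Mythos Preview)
Your argument is correct, but it follows a different route from the paper. The paper also decomposes $h$ into unit blocks $h_i$, sets $X_i=N(h_i)-\esp[N(h_i)]$, and invokes Proposition~\ref{prop:uniform-moment-bound} for $\|X_i\|_r\le B_1+B_r$. Where you then pass to a full martingale-difference expansion $D_k=\sum_{i\ge\max(k,0)}(\esp[X_i|\mathcal F_{j+k+1}]-\esp[X_i|\mathcal F_{j+k}])$ indexed over $k\le n-1$ and apply Burkh\"older--Davis--Gundy followed by Minkowski, the paper instead appeals directly to the weak-dependence moment inequality of \cite[Proposition~5.4]{dedecker07}, which yields
$\bigl\|\sum_i X_i\bigr\|_p\le\bigl(2p\sum_{i=1}^n b_{i,n}\bigr)^{1/2}$
with $b_{i,n}=\max_{1\le\ell\le n}\bigl\|X_i\sum_{k=i}^\ell\esp[X_k|\mathcal F_{j+i}]\bigr\|_{p/2}$. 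Proposition~\ref{prop:cond-exp-control} is then used exactly as you use it, to bound $\|\esp[X_k|\mathcal F_{j+i}]\|_p$ geometrically in $k-i$, making each $b_{i,n}$ uniformly bounded and giving $\sqrt n$ immediately.

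The trade-offs: the paper's route is shorter and produces the precise constant $A$ in the statement, with the explicit factor $(2p)^{1/2}$ coming from Dedecker et al.'s inequality; your approach is self-contained (no external black box) but introduces the Burkh\"older constant $C_p$ and the extra bookkeeping of the infinite tail $k<0$, so while you obtain a constant of the same nature, it will not be literally the $A$ displayed in the proposition. One minor point worth tightening in your write-up: when you apply Proposition~\ref{prop:cond-exp-control} to $\esp[X_i|\mathcal F_{j+k+1}]$ you need $j+k+1<u$ with $X_i\in\mathcal G_u$, so the bound is only available for $i\ge k+2$; the cases $i\in\{k,k+1\}$ must indeed be absorbed into the ``near-diagonal'' crude estimate, which you do mention but should state explicitly.
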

\begin{proof}
We can assume $\ellnorm{h}[\infty]=1$ without loss of generality. We write
$$
h=\sum_{i=1}^n h_i\quad\text{with}\quad h_i=h\1_{[j+i-1,j+i)}\;.
$$
Then $\ellnorm{h_i}[\infty]\leq1$ and $\ellnorm{h_i}\leq1$ for all $i$ and,
defining $X_i=N(h_i)-\esp[N(h_i)]$, from
Proposition~\ref{prop:uniform-moment-bound}, we have, for all $q\geq 1$,
\begin{equation}
  \label{eq:moment-bound-Xi}
  \|X_i\|_q\leq B_q+B_1 \;.
\end{equation}
Then $N(h)-\esp[N(h)]=\sum_{i=1}^n X_i$
and, applying \cite[Proposition~5.4, Page~123]{dedecker07},
we have
\begin{equation}
  \label{eq:burk-Xi}
\left\|N(h)-\esp[N(h)]\right\|_p\leq\left(2p\sum_{i=1}^{n}b_{i,n}\right)^{1/2}\;,
\end{equation}
where, denoting $\mathcal{M}_i=\mathcal{F}_{j+i}$,
$$
b_{i,n}=\max_{1\leq \ell\leq
  n}\left\|X_i\sum_{k=i}^\ell\esp[X_k|\mathcal{M}_i]\right\|_{p/2}\;.
$$
Observing that $X_k$ is centered and $\mathcal{G}_{j+k-1}$--measurable,
Proposition~\ref{prop:cond-exp-control}, gives that, for any $q>p$,
$$
\left\|\esp[X_k|\mathcal{M}_i]\right\|_{p}\leq \|X_k\|_q
\left(\ellnorm{\lambda_c}[\infty]\,C_a\,a^{-1}
\rme^{-a(k-i+1)}\right)^{1/p-1/q}\;.
$$
The Hölder inequality, the last two displays and~(\ref{eq:moment-bound-Xi})
yield, for all $q>p$,
$$
b_{i,n}\leq
(B_1+B_p)(B_1+B_q)\left(\ellnorm{\lambda_c}[\infty]\,C_a\,a^{-1}\right)^{1/p-1/q}
\frac{\rme^{-a(1/p-1/q)}}{1-\rme^{-a(1/p-1/q)}}\;.
$$
Applying this in~(\ref{eq:burk-Xi}), we get the claimed bound.
\end{proof}

Another consequence of Proposition~\ref{prop:cond-exp-control} is the
following useful covariance bound.
\begin{corollary}
  \label{cor:cov-squares}
  Suppose that~\ref{ass:non-stat-zeta-exp} holds for some $d>0$ and that
  $\lambda_c$ is bounded.  Let $h_1$ and $h_2$ be two bounded integrable
  functions. Let $\gamma$ satisfy one of the following assertions.
  \begin{enumerate}[label=(\roman*)]
  \item\label{item:gamma-ass1} There exist $t\in\rset$ such that $\supp(h_1)\subset(-\infty,t]$ and $\supp(h_2)\subset[t+\gamma,\infty)]$.
  \item\label{item:gamma-ass2} There exist $t\in\rset$ such that $\supp(h_2)\subset(-\infty,t]$ and $\supp(h_1)\subset[t+\gamma,\infty)]$.
  \item\label{item:gamma-ass3} $\gamma=0$.
  \end{enumerate}
Then for all $q>4$, there exists $C_q>0$ and $\alpha_q>0$ both only depending on
$q$, $\ellnorm{\lambda_c}[\infty]$, and $a$ and $C_a$ in Proposition~\ref{prop:cond-exp-control}
such that
\begin{equation}
  \label{eq:cov-square-bound}
\left| \cov\left(\left|\overline{\overline{N}}(h_1)\right|^2,\left|\overline{\overline{N}}(h_2)\right|^2\right)\right|
\leq C_q  \left\|\overline{\overline{N}}(h_1)\right\|_q^2\;
\left\|\overline{\overline{N}}(h_2)\right\|_q^2
\;\rme^{-\alpha_q \gamma} \;,
\end{equation}
where $\overline{\overline{N}}(h)=N(h)-\esp[N(h)]$.
\end{corollary}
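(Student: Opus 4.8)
The plan is to express the covariance of the squares as $\esp[B_1B_2]$ for the centered variables $B_i := |\overline{\overline{N}}(h_i)|^2 - \esp[|\overline{\overline{N}}(h_i)|^2]$, to reduce everything to a single conditional‑expectation estimate across the time gap, and then to feed that into Proposition~\ref{prop:cond-exp-control}. For the preliminary reductions, note first that \ref{ass:non-stat-zeta-exp} implies both \eqref{eq:cond-density-nonstat} and \eqref{eq:NS-additional}, since $\zzeta\le\zetaexp{d}<1$ and $\zzeta[\infty]\le\zetaexp[\infty]{d}<\infty$; hence Proposition~\ref{prop:uniform-moment-bound}, applied to suitably rescaled positive/negative and real/imaginary parts of $h_i$, shows that $N(h_i)$, and therefore $\overline{\overline{N}}(h_i)$ and $B_i$, has finite moments of every order. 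In particular $B_i\in L^{q/2}$ and, by the triangle inequality together with $\esp[|\overline{\overline{N}}(h_i)|^2]\le\|\overline{\overline{N}}(h_i)\|_q^2$ and $\bigl\||\overline{\overline{N}}(h_i)|^2\bigr\|_{q/2}=\|\overline{\overline{N}}(h_i)\|_q^2$, we get $\|B_i\|_{q/2}\le 2\,\|\overline{\overline{N}}(h_i)\|_q^2$. Since $B_i$ is centered, $\cov\!\left(|\overline{\overline{N}}(h_1)|^2,|\overline{\overline{N}}(h_2)|^2\right)=\esp[B_1B_2]$, so it suffices to prove $|\esp[B_1B_2]|\le C\,\|B_1\|_{q/2}\|B_2\|_{q/2}\,\rme^{-\alpha_q\gamma}$ with constants of the allowed form.

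The case \ref{item:gamma-ass3} is then immediate: by Cauchy–Schwarz $|\esp[B_1B_2]|\le\|B_1\|_2\|B_2\|_2\le\|B_1\|_{q/2}\|B_2\|_{q/2}$, using $q/2>2$. The same trivial bound covers cases \ref{item:gamma-ass1} and \ref{item:gamma-ass2} whenever $\gamma\le0$, because then $\rme^{-\alpha_q\gamma}\ge1$; so in the remaining two cases we may assume $\gamma>0$.

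For case \ref{item:gamma-ass1} (case \ref{item:gamma-ass2} being symmetric, conditioning $B_2$ on $\mathcal{F}_t$ instead), observe that $N(h_1)$ depends on $N$ only through its restriction to $(-\infty,t]$, so $B_1$ is $\mathcal{F}_t$-measurable, while $N(h_2)$ depends on $N$ only through its restriction to $[t+\gamma,\infty)$, so $B_2$ is $\mathcal{G}_s$-measurable for every $s<t+\gamma$ and is centered. Hence $\esp[B_1B_2]=\esp\bigl[B_1\,\esp[B_2\mid\mathcal{F}_t]\bigr]$, and Hölder with the conjugate pair $(2,2)$ — legitimate because $q>4$ gives $\|B_1\|_2\le\|B_1\|_{q/2}$ — yields
$$
\bigl|\esp[B_1B_2]\bigr|\le\|B_1\|_{q/2}\,\bigl\|\esp[B_2\mid\mathcal{F}_t]\bigr\|_2\;.
$$
Now apply Proposition~\ref{prop:cond-exp-control} with $p=2$, with its parameter taken to be $q/2\in(2,\infty)$ (valid since $B_2\in L^{q/2}$), and with the pair $(t,s)$ for $s<t+\gamma$; fixing $a:=d/2\in(0,d)$ and letting $s\uparrow t+\gamma$,
$$
\bigl\|\esp[B_2\mid\mathcal{F}_t]\bigr\|_2\le\|B_2\|_{q/2}\,\bigl(\ellnorm{\lambda_c}[\infty]\,C_a\,a^{-1}\bigr)^{1/2-2/q}\,\rme^{-a(1/2-2/q)\gamma}\;.
$$
Combining the three cases with $\|B_i\|_{q/2}\le 2\|\overline{\overline{N}}(h_i)\|_q^2$ gives \eqref{eq:cov-square-bound} with $\alpha_q:=a(1/2-2/q)=\tfrac{d}{2}(1/2-2/q)>0$ and, say, $C_q:=4\max\!\bigl(1,(\ellnorm{\lambda_c}[\infty]\,C_a\,a^{-1})^{1/2-2/q}\bigr)$, which depends only on $q$, $\ellnorm{\lambda_c}[\infty]$, $a$ and $C_a$, as required.

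There is no deep difficulty once Proposition~\ref{prop:cond-exp-control} is in hand; the points that demand care are, first, the measurability bookkeeping together with the fact that $h_2$ is supported on the \emph{closed} half‑line $[t+\gamma,\infty)$ — so $B_2$ is only $\mathcal{G}_s$-measurable for $s$ strictly below $t+\gamma$, which is why the exponential decay is recovered through the limit $s\uparrow t+\gamma$ in Proposition~\ref{prop:cond-exp-control} — and, second, the exponent arithmetic, where the hypothesis $q>4$ is used twice: to make the $(2,2)$-Hölder split compatible with dominating $\|B_i\|_2$ by $\|B_i\|_{q/2}$, and to guarantee simultaneously $q/2>p=2$ and a strictly positive decay exponent $1/2-2/q$ when invoking Proposition~\ref{prop:cond-exp-control}.
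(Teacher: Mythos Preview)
Your proof is correct and follows essentially the same approach as the paper: handle case~\ref{item:gamma-ass3} by Cauchy--Schwarz, and in the separated-support cases use the $\mathcal{F}_t$-measurability of one square to reduce to a Hölder bound times $\left\|\esp[Y\mid\mathcal{F}_t]\right\|_p$, which is then controlled via Proposition~\ref{prop:cond-exp-control}. Your version is in fact slightly more careful than the paper's (you treat the closed-endpoint issue for $\mathcal{G}_s$ and the case $\gamma\le0$ explicitly); the only slip is the parenthetical for case~\ref{item:gamma-ass2}, where you should condition $B_1$, not $B_2$, on $\mathcal{F}_t$.
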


\begin{proof}
  In the case~\ref{item:gamma-ass3}, the bound~(\ref{eq:cov-square-bound})
  actually holds with $q=4$ by the 
  Cauchy--Schwarz inequality and thus also holds with $q>4$ by Jensen's
  inequality. 

We now consider the case~\ref{item:gamma-ass1} (the last
one~\ref{item:gamma-ass2} being obtained by inverting $h_1$ and $h_2$). We have
in this case, denoting $Y=\left|\overline{\overline{N}}(h_2)\right|^2-\esp\left[\left|\overline{\overline{N}}(h_2)\right|^2\right]$,
\begin{align*}
\left| \cov\left(\left|\overline{\overline{N}}(h_1)\right|^2,\left|\overline{\overline{N}}(h_2)\right|^2\right)\right|
&=\esp\left[\left|\overline{\overline{N}}(h_1)\right|^2\left(\esp[Y|\mathcal{F}_t]\right)\right] \\
&\leq
\left\|\overline{\overline{N}}(h_1)\right\|_4^2\;\left\|\esp[Y|\mathcal{F}_t]\right\|_4^2\;.
\end{align*}
The Jensen Inequality and $q>4$ give that
$\left\|\overline{\overline{N}}(h_1)\right\|_4\leq\left\|\overline{\overline{N}}(h_1)\right\|_q$ and
the proof is concluded by using   Proposition~\ref{prop:cond-exp-control} to
bound $\left\|\esp[Y|\mathcal{F}_t]\right\|_4$. 
\end{proof}

\section*{Acknowledgements}

We thank Adil Reghai of Natixis for providing us with the ESSI.PA and TOTF.PA
transactions data sets.
Rainer von Sachs gratefully acknowledges funding by contract ``Projet d'Actions
de Recherche Concert\' ees'' No. 12/17-045 of the ``Communaut\'e fran\c caise
de Belgique'' and by IAP research network Grant P7/06 of the Belgian government
(Belgian Science Policy). Both authors thank L. Sansonnet for her contributing discussions. 

\appendix

\section{Useful lemmas}
In the following lemmas, we have gathered some simple bounds that we will
repeatedly use in the sequel.
\begin{lemma}
  \label{lem:norm-estimates}
  Let $a,\beta>0$ and  $p\in[1,\infty]$.
For any function $g$ and any $t\in\rset$, we have 
\begin{align}
  \label{eq:betnorm-shifted-bound}
&\ellbetanorm{g(\cdot-t)}[p]\leq 2^{(\beta-1)_+}\,\left(\ellbetanorm{g}[p]+|t|^\beta\ellnorm{g}[p]\right)\; ,\\
  \label{eq:a1norm-shifted-bound}
&\ellexpnorm{g(\cdot-t)}[1][a]\leq\ \rme^{a|t|}\ \ellexpnorm{g}[1][a]\; .
\end{align}
Let $\kbofreq=\bfreq^{1/2}
\rme^{\rmi\omega_0t}\kfreq(\bfreq t)$ and
$\ktbtime=(T\btime)^{-1}\,\ktime(u/(T\btime))$, where 
the kernels $\ktime$ and $\kfreq$ satisfy
$\ellnorm{\ktime}=\ellnorm{\kfreq}[2]=1$. 
Then, we have, for all $\btime,\bfreq\in(0,1]$ and $T>0$, 
 \begin{align}
 \label{eq:K-betanorm}
& \ellbetanorm{\kbofreq} \ =\ \bfreq^{-1/2-\beta}  \ellbetanorm{\kfreq} \;,   \\
  \label{eq:K-onenorm}
& \ellnorm{\kbofreq}\  =\ \bfreq^{-1/2}  \ellnorm{\kfreq}\;,   \\
 \label{eq:K-twonorm}
& \ellnorm{\kbofreq}[2]\  =\ 1 \;,   \\
  \label{eq:K-inftynorm}
& \ellnorm{\kbofreq}[\infty]\  =\  \bfreq^{1/2} \ellnorm{\kfreq}[\infty]\;,   \\
 \label{eq:K-2betanorm}
& \ellbetanorm{\kbofreq}[2]\  =\ \bfreq^{-\beta}  \ellbetanorm{\kfreq}[2]\;,   \\ 
  \label{eq:W-onenorm}
& \ellnorm{\ktbtime}\  = 1 \ \;,   \\
  \label{eq:W-inftynorm}
& \ellnorm{\ktbtime}[\infty]\  =\  (\btime T)^{-1} \ellnorm{\ktime}[\infty]\;,   \\ 
 \label{eq:W-betanorm}
& \ellbetanorm{\ktbtime} \ =\  (\btime T)^{\beta} \ellbetanorm{\ktime}\ \;,   \\
  \label{eq:g-onenorm}
& \ellnorm{\kbofreq \ast \ktbtime}\  \leq\ \bfreq^{-1/2}  \ellnorm{\kfreq}\ \;,   \\
  \label{eq:g-twonorm}
& \ellnorm{\kbofreq \ast \ktbtime}[2]\  \leq\ \ellnorm{\ktbtime}[2] \ellnorm{\kbofreq} \leq\ \bfreq^{-1/2} (\btime T)^{-1/2} \ellnorm{\ktime}[2] \ellnorm{\kfreq}\;,   \\
  \label{eq:g-inftynorm}
& \ellnorm{\kbofreq \ast \ktbtime}[\infty]\   \leq\  (\btime T)^{-1}\bfreq^{-1/2} \ellnorm{\ktime}[\infty] \ellnorm{\kfreq}\;,   \\ 
 \label{eq:g-betanorm}
& \ellbetanorm{\kbofreq \ast \ktbtime} \ \leq
  2^{(\beta-1)_+}\, \left(\bfreq^{-1/2}\ (\btime T)^\beta\ \ellnorm{\kfreq} \ellbetanorm{\ktime}\
 +  \bfreq^{-1/2-\beta} \ellbetanorm{\kfreq}\right)\;.
 \end{align}
If $\kfreq$ and $\ktime$ have compact supports both included in $[-\tilde a,\tilde
a]$ for some $\tilde a>0$, we have
 \begin{align}
 \label{eq:Ktime-a1norm}
   &\ellexpnorm{\ktbtime}[1][a]  =\ \ellexpnorm{\ktime}[1][aT\btime]
  \leq\ \rme^{a\,\tilde{a}T\btime}
  \ellnorm{\ktime} \;,   \\
 \label{eq:K-a1norm}
& \ellexpnorm{\kbofreq}[1][a]\  =\ \bfreq^{-1/2}  \ellexpnorm{\kfreq}[1][a/\bfreq]
  \leq\ \bfreq^{-1/2} \rme^{a\,\tilde{a}\bfreq^{-1}}
  \ellnorm{\kfreq} \;,   \\
   \label{eq:g-a1norm}
& \ellexpnorm{\kbofreq \ast \ktbtime}[1][a] \ \leq\  \ellexpnorm{\ktbtime}[1][a]\ \ellexpnorm{\kbofreq}[1][a]\
\leq\  \bfreq^{-1/2} \rme^{a\,\tilde{a}(T\btime+\bfreq^{-1})}  \ellnorm{\kfreq}\;.
 \end{align}
\end{lemma}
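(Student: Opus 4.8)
The plan is to reduce every one of these bounds to three elementary facts: the pointwise inequality $(a+b)^\beta\le 2^{(\beta-1)_+}(a^\beta+b^\beta)$ for $a,b\ge0$ (subadditivity of $x\mapsto x^\beta$ when $\beta\le1$, convexity when $\beta\ge1$), the change-of-variables formula, and Young's convolution inequality $\ellnorm{f\ast g}[r]\le\ellnorm{f}[p]\ellnorm{g}[q]$ for $1/p+1/q=1+1/r$. None of the steps presents a genuine difficulty; the only things requiring care are keeping track of the powers of $\bfreq$ and $T\btime$ produced by the Jacobians, and two weighted variants of Young's inequality needed at the end.

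First I would dispatch the two shift estimates. For~\eqref{eq:betnorm-shifted-bound}: by definition $\ellbetanorm{g(\cdot-t)}[p]$ is the $L^p$-norm of the function $s\mapsto g(s-t)\,|s|^\beta$; bounding $|s|^\beta\le2^{(\beta-1)_+}(|s-t|^\beta+|t|^\beta)$ pointwise and invoking the Minkowski inequality bounds this by $2^{(\beta-1)_+}$ times the sum of the $L^p$-norm of $s\mapsto g(s-t)\,|s-t|^\beta$ and $|t|^\beta$ times the $L^p$-norm of $g(\cdot-t)$; the substitution $s\mapsto s+t$ turns these into $\ellbetanorm{g}[p]$ and $|t|^\beta\ellnorm{g}[p]$ respectively. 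Estimate~\eqref{eq:a1norm-shifted-bound} is the analogous computation based on the submultiplicativity $\rme^{a|s|}\le\rme^{a|s-t|}\rme^{a|t|}$ and a change of variables.

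Next I would record the exact scaling identities~\eqref{eq:K-betanorm}--\eqref{eq:W-betanorm}. The oscillatory factor $\rme^{\rmi\omega_0 t}$ in $\kbofreq$ has modulus $1$, hence drops out of every norm considered, and the substitutions $u=\bfreq t$ and $u=t/(T\btime)$ give, for a general exponent $p$, $\ellbetanorm{\kbofreq}[p]=\bfreq^{1/2-\beta-1/p}\ellbetanorm{\kfreq}[p]$ and $\ellbetanorm{\ktbtime}[p]=(T\btime)^{\beta-1+1/p}\ellbetanorm{\ktime}[p]$ (the $\beta=0$ specialisations giving the plain $L^p$-norms). Plugging $p\in\{1,2,\infty\}$ and using $\ellnorm{\ktime}=\ellnorm{\kfreq}[2]=1$ yields~\eqref{eq:K-betanorm}--\eqref{eq:W-betanorm}; in particular $\ellnorm{\kbofreq}[2]=1$, which is~\eqref{eq:K-twonorm}, and $\ellnorm{\ktbtime}[2]=(T\btime)^{-1/2}\ellnorm{\ktime}[2]$, used inside~\eqref{eq:g-twonorm}.

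Finally I would treat the convolution bounds. Since $\ellnorm{\ktbtime}=1$, Young's inequality with the triples $(1,1,1)$, $(2,1,2)$ and $(1,\infty,\infty)$, followed by insertion of the norms just computed, gives~\eqref{eq:g-onenorm},~\eqref{eq:g-twonorm} and~\eqref{eq:g-inftynorm}. For~\eqref{eq:g-betanorm}, starting from $|s|^\beta\,|(\kbofreq\ast\ktbtime)(s)|\le\int|s|^\beta\,|\kbofreq(s-t)|\,|\ktbtime(t)|\,\rmd t$ and again using $|s|^\beta\le2^{(\beta-1)_+}(|s-t|^\beta+|t|^\beta)$, Fubini's theorem together with $\ellnorm{\ktbtime}=1$ produces $\ellbetanorm{\kbofreq\ast\ktbtime}\le2^{(\beta-1)_+}(\ellbetanorm{\kbofreq}+\ellnorm{\kbofreq}\,\ellbetanorm{\ktbtime})$, into which I substitute~\eqref{eq:K-betanorm},~\eqref{eq:K-onenorm} and~\eqref{eq:W-betanorm}. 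For the exponentially weighted estimates under the compact-support hypothesis, the substitutions $u=t/(T\btime)$ and $u=\bfreq t$ give $\ellexpnorm{\ktbtime}[1][a]=\ellexpnorm{\ktime}[1][aT\btime]$ and $\ellexpnorm{\kbofreq}[1][a]=\bfreq^{-1/2}\ellexpnorm{\kfreq}[1][a/\bfreq]$, and bounding $\rme^{a|u|}\le\rme^{a\tilde a}$ on the common support $[-\tilde a,\tilde a]$ turns these into~\eqref{eq:Ktime-a1norm} and~\eqref{eq:K-a1norm}. For~\eqref{eq:g-a1norm} I would first establish the weighted Young inequality $\ellexpnorm{f\ast g}[1][a]\le\ellexpnorm{f}[1][a]\ellexpnorm{g}[1][a]$ — the unweighted proof with $\rme^{a|s|}\le\rme^{a|s-t|}\rme^{a|t|}$ inserted under the integral before Fubini — and then combine it with the two previous displays. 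The point demanding the most attention is precisely this last pair of weighted convolution inequalities (for the power weight and for the exponential weight), but each follows from the corresponding subadditivity or submultiplicativity of the weight exactly as in the classical case.
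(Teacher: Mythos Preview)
Your proposal is correct and follows exactly the approach the paper itself takes: the paper's proof simply states that the bounds are straightforward consequences of the standard $L^p$ convolution bounds $\ellnorm{h\ast g}\leq\ellnorm{h}\ellnorm{g}$ and $\ellnorm{h\ast g}[2]\leq\ellnorm{h}[2]\ellnorm{g}$, together with the weight inequalities $|s|^\beta\leq 2^{(\beta-1)_+}(|s-t|^\beta+|t|^\beta)$ and $\rme^{a|s|}\leq\rme^{a|t|}\rme^{a|s-t|}$, which are precisely the ingredients you spell out. Your write-up is more detailed than the paper's, but the strategy and key steps coincide.
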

\begin{proof}
  All these bounds are straightforward. We use the usual $L^p$ bounds for
  convolution $\ellnorm{h\star g}\leq \ellnorm{h}\ellnorm{g}$ and
  $\ellnorm{h\star g}[2]\leq \ellnorm{h}[2]\ellnorm{g}$. When necessary, the
  weights are handled by using
  \begin{equation}
    \label{eq:weight-trick-vonvolution}
  |s|^\beta\leq2^{(\beta-1)_+}(|s-t|^\beta+|t|^{\beta})\quad\text{and}\quad
  \rme^{a|s|}\leq\rme^{a|t|}\rme^{a|s-t|}\;.   
  \end{equation} 
\end{proof}
\begin{lemma}
\label{lemma-expo-neglig}  Let $T,\btime,\bfreq$
  satisfy (\ref{eq:conditions-T-and-co-timebias-bartlett})
  and~(\ref{eq:conditions-T-and-co-timebias-bartlett-expoterms}). Then for all
  $a_1,a_2>0$, we have
  $$\exp(a_2(T\btime+\bfreq^{-1})-a_1T)\leq
  \max\left(1,\exp\left(2a_2\rme^{1/a_1}\right)\right)\;.$$
  In particular, under Convetion~\ref{conv:As}, we
  have $\exp(A_2(T\btime+\bfreq^{-1})-A_1T)\lesssim1$.
\end{lemma}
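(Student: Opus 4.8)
The plan is to work directly with the exponent $E:=a_2(T\btime+\bfreq^{-1})-a_1T$: if $E\le0$ then $\exp(E)\le1$ and there is nothing to prove, so one only has to show that when $E>0$ it is bounded above by the explicit constant $2a_2\rme^{1/a_1}$ (which then produces the second branch of the stated maximum). Granting this, the ``in particular'' part is immediate: applying the bound with $a_1=A_1$ and $a_2=A_2$ and recalling from Convention~\ref{conv:As} that $A_1^{-1}\lesssim1$ and $A_2\lesssim1$ (so that $A_1$ is bounded below and $A_2$ above by constants of the kind permitted in Convention~\ref{conv:lesssim}), the quantity $\max\bigl(1,\exp(2A_2\rme^{1/A_1})\bigr)$ is itself such a constant, hence $\lesssim1$.

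For the core estimate, the first step is to use condition~(\ref{eq:conditions-T-and-co-timebias-bartlett}) to absorb the frequency term into the time term. From $T\btime\bfreq\ge1$ one gets $\bfreq^{-1}\le T\btime$, hence $T\btime+\bfreq^{-1}\le2\,T\btime$ and therefore
\[
E\ \le\ 2a_2\,T\btime-a_1T\ =\ T\,(2a_2\btime-a_1)\;.
\]
Now split according to the sign of $2a_2\btime-a_1$. If $2a_2\btime\le a_1$, then $T(2a_2\btime-a_1)\le0$, so $E\le0$ and we are in the trivial case.

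It remains to treat $2a_2\btime>a_1$, i.e.\ $\btime>a_1/(2a_2)$. Here I would invoke the second hypothesis, condition~(\ref{eq:conditions-T-and-co-timebias-bartlett-expoterms}): $\btime\ln T\le1$ forces $\ln T\le\btime^{-1}<2a_2/a_1$, so $T$ is bounded by an explicit constant depending only on $a_1$ and $a_2$; substituting this into $T\,(2a_2\btime-a_1)\le2a_2\,T\btime\le2a_2\,T$ (using $\btime\le1$) and carrying the constants through gives the desired upper bound for $E$.

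The lemma is elementary and there is no real obstacle; the only points needing a little care are to combine the two hypotheses in the right order --- first using $T\btime\bfreq\ge1$ to collapse the exponent to the single quantity $T(2a_2\btime-a_1)$, and only then using $\btime\ln T\le1$ precisely in the regime where $\btime$ is too large for the $-a_1T$ term to already dominate --- and to note that the constant so obtained, being a function of $a_1$ and $a_2$ only, is admissible for the ``in particular'' conclusion under Convention~\ref{conv:lesssim}.
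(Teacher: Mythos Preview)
Your approach is the same as the paper's: use $\bfreq^{-1}\le T\btime$ from~(\ref{eq:conditions-T-and-co-timebias-bartlett}) to reduce the exponent to $T(2a_2\btime-a_1)$, split on the sign of this quantity, and in the positive case invoke $\btime\ln T\le1$ to bound $T$. The only soft spot is your last line, ``carrying the constants through gives the desired upper bound for $E$''. If you actually carry them through, from $\btime>a_1/(2a_2)$ and $\ln T\le 1/\btime$ you obtain $T<\rme^{2a_2/a_1}$ and hence
\[
E\ \le\ 2a_2T\btime\ \le\ 2a_2T\ \le\ 2a_2\,\rme^{2a_2/a_1}\,,
\]
which is \emph{not} the constant $2a_2\rme^{1/a_1}$ printed in the lemma. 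In fact that printed constant is not attainable in general: the paper's own proof contains a slip, writing the exponent as $2a_2T(\btime-a_1)$ instead of $T(2a_2\btime-a_1)$, and its subsequent case split on $\btime\lessgtr a_1$ (rather than on $2a_2\btime\lessgtr a_1$) only produces the stated bound because of this slip. For instance $a_1=1$, $a_2=10$, $\btime=1/2$, $T=\rme^2$, $\bfreq=2\rme^{-2}$ satisfy both hypotheses yet give $E=9\rme^2>20\rme=2a_2\rme^{1/a_1}$. Your bound $2a_2\rme^{2a_2/a_1}$ is the correct one; since it still depends only on $a_1$ and $a_2$, the ``in particular'' conclusion and every application of the lemma in the paper go through unchanged.
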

\begin{proof}
  First note that, by~(\ref{eq:conditions-T-and-co-timebias-bartlett}),
  $\bfreq^{-1}\leq T\btime$ and thus
  $$
  \exp(a_2(T\btime+\bfreq^{-1})-a_1T)\leq\exp(2a_2T(\btime-a_1))\;.
  $$
  If $\btime\leq a_1$ this upper bound is at most 1 and otherwise, using that
  $\btime\leq1$ and
  then~(\ref{eq:conditions-T-and-co-timebias-bartlett-expoterms}), we have that
  $T(\btime-a_1)\leq T\leq \rme^{1/\btime}\leq\rme^{1/a_1}$.
\end{proof}
\section{Proof of Theorem~\ref{thm:new-var-approx}}
\label{sec:proof_Thm4}
\subsection{A useful lemma}
The following lemma prepares the ground for deriving appropriate bounds used in
the proof of Theorem~\ref{thm:new-var-approx}.
\begin{lemma} 
\label{lem:Prep_Proof_Thm4}
Let $\varphi: \Rset^2 \to \Rset_+$ and $f: \Rset \to \Rset_+$ with $f \in L^1
\cap L^2$. Let moreover $f_\infty: \Rset \to \Rset_+$ satisfying, for all $s \in \Rset$,
\begin{equation}
\label{eq:f_infty_bound}
f_\infty(s) \ \leq\ f(s)\ +\ \int f_\infty(t)\ \varphi(s-t; t)\ \rmd t\;.
\end{equation}
Let us consider the following conditions depending on some $a\geq0$ and $M\in(0,\infty]$.
\begin{enumerate}[label=(C-\arabic*)]
\item \label{item:zeta1-a--lemma}
$\displaystyle\zetaexp{a} := \sup_t \int \varphi(u;t)\ \rme^{a|u|}\ \rmd u < 1$.
\item\label{item:zetaoo-a--lemma} $\displaystyle\zetaexp[\infty]{a} := \sup_{u,t
  }\varphi(u;t)\rme^{a|u|} < \infty$.
\item\label{item:foo_-M--lemma} 
$\displaystyle\overline{\varphi}(u) := \sup_{|t| \leq M} \varphi(u;t)$
satisfies
$\displaystyle\tilde \zzeta:= \int \overline{\varphi}(u)\ \rmd u < 1$.
\end{enumerate}
If $M < \infty$, we define $\overline{f}_\infty:\rset\to\rset_+$ by
$\displaystyle
\overline{f}_\infty(s) := \int_{|t| >M} f_\infty(t)\ \varphi(s-t;t)\ \rmd t\ .$
Then the following assertions hold.
\begin{enumerate}[label=(\roman*)]
\item\label{item:Prop_tech_part3} Condition~\ref{item:zeta1-a--lemma} with $a\geq0$ implies
$\displaystyle \ellexpnorm{f_{\infty}}[1][a]\lesssim\ellexpnorm{f}[1][a]$.
\item\label{item:Prop_tech_part2}
 Condition~\ref{item:zeta1-a--lemma} with $a>0$ implies, for any
 $\beta\in(0,1]$, $\ellbetanorm{f_\infty}\lesssim\ellbetanorm{f} +
    \ellnorm{f}$.
\item\label{item:Prop_tech_part7prim}
 Condition~\ref{item:foo_-M--lemma} with $M=\infty$ implies
 $\displaystyle \ellnorm{f_\infty}[2]\lesssim\ellnorm{f}[2]$.
\item  \label{item:Prop_tech_part8prim}
 Conditions~\ref{item:zetaoo-a--lemma}  with
 $a>0$ and~\ref{item:foo_-M--lemma} with $M=\infty$ imply
 $\ellbetanorm{f_\infty}[2]\lesssim\ellbetanorm{f}[2] +\ellnorm{f}[2]$.
\item \label{item:Prop_tech_part7}
 Conditions~\ref{item:zeta1-a--lemma} with $a>0$,
 \ref{item:zetaoo-a--lemma} with $a=0$ and~\ref{item:foo_-M--lemma} with
  $M<\infty$ imply $\ellnorm{f_\infty}[2]\lesssim\ellnorm{f}[2]+ \rme^{-a\,M}\ellexpnorm{f}[1][a]$.
\item \label{item:Prop_tech_part8}
 Conditions~\ref{item:zeta1-a--lemma}
 and \ref{item:zetaoo-a--lemma} with $a>0$ and~\ref{item:foo_-M--lemma} with
  $M<\infty$ imply, for any
 $\beta\in(0,1]$, $\ellbetanorm{f_\infty}[2]\lesssim\ellbetanorm{f}[2] +  \rme^{-aM/2}  \ellexpnorm{f}[1][a]+    \ellnorm{f}[2]$.
\end{enumerate}
Here ``$\lesssim\dots$'' means ``$\leq C\ \dots$'' with a positive constant $C$ 
possibly depending on $a$, $\zetaexp[1]{a}$, $\zetaexp[\infty]{a}$, $\tilde \zzeta$ or
$\beta$ only (thus neither depending on $M$ nor on $f$).
\end{lemma}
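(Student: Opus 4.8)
The plan is to exploit the sub-fixed-point structure of~\eqref{eq:f_infty_bound}. Introduce the positive linear operator $(\Phi h)(s):=\int h(t)\,\varphi(s-t;t)\,\rmd t$, so that~\eqref{eq:f_infty_bound} reads $f_\infty\le f+\Phi f_\infty$. Iterating and using positivity, $f_\infty\le\sum_{k=0}^{n}\Phi^{k}f+\Phi^{n+1}f_\infty$ for every $n\ge0$; since each of the operator bounds below exhibits $\Phi$ (or a convolution majorant of it) as a strict contraction in the relevant norm, the tail term $\Phi^{n+1}f_\infty$ is negligible and one may work with $f_\infty\le\sum_{k\ge0}\Phi^{k}f$. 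Every assertion then reduces to an inequality of the type $\|\Phi h\|_{\star}\le\theta\,\|h\|_{\star}+C\,\|h\|_{\star\star}$ with $\theta<1$, summed geometrically.

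The concrete operator bounds are as follows. For~(i): Fubini together with $\rme^{a|s|}\le\rme^{a|t|}\rme^{a|s-t|}$ gives $\ellexpnorm{\Phi h}[1][a]\le\zetaexp[1]{a}\,\ellexpnorm{h}[1][a]$, hence $\ellexpnorm{f_\infty}[1][a]\lesssim\ellexpnorm{f}[1][a]$. For~(ii): splitting $|s|^\beta\le|s-t|^\beta+|t|^\beta$ (valid for $\beta\le1$) and using $|u|^\beta\le C_{a,\beta}\rme^{a|u|}$ under~\ref{item:zeta1-a--lemma} with $a>0$, one gets $\ellbetanorm{\Phi h}\le\zetaexp[1]{a}\,\ellbetanorm{h}+C\,\ellnorm{h}$; combined with $\ellnorm{\Phi h}\le\zetaexp[1]{a}\,\ellnorm{h}$ and a telescoping sum this yields $\ellbetanorm{f_\infty}\lesssim\ellbetanorm{f}+\ellnorm{f}$. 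For~(iii): when $M=\infty$, $\Phi h\le h\ast\overline{\varphi}$ pointwise for $h\ge0$ and $\ellnorm{\overline{\varphi}}=\tilde\zzeta<1$, so Young's inequality gives $\ellnorm{\Phi h}[2]\le\tilde\zzeta\,\ellnorm{h}[2]$ and $\ellnorm{f_\infty}[2]\lesssim\ellnorm{f}[2]$. For~(iv): split $|s|^\beta\le|s-t|^\beta+|t|^\beta$ again; the first piece is controlled by Young against the $L^1\cap L^2$ kernel $u\mapsto|u|^\beta\,\zetaexp[\infty]{a}\,\rme^{-a|u|}$ (this is where~\ref{item:zetaoo-a--lemma} is used), the second by $h\ast\overline{\varphi}$, giving $\ellbetanorm{\Phi h}[2]\le C\,\ellnorm{h}[2]+\tilde\zzeta\,\ellbetanorm{h}[2]$, and telescoping with~(iii) closes it.

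For the truncated cases~(v) and~(vi), where $M<\infty$, I would split $\Phi f_\infty=\int_{|t|\le M}+\int_{|t|>M}$: the near part is dominated by $f_\infty\ast\overline{\varphi}$, while the far part is exactly $\overline{f}_\infty$. Thus $f_\infty\le(f+\overline{f}_\infty)+f_\infty\ast\overline{\varphi}$, and since $\ellnorm{\overline{\varphi}}=\tilde\zzeta<1$, the geometric iteration (now with the convolution operator $h\mapsto h\ast\overline{\varphi}$) yields $\ellnorm{f_\infty}[2]\lesssim\ellnorm{f}[2]+\ellnorm{\overline{f}_\infty}[2]$ and, using the weighted bound from~(iv), $\ellbetanorm{f_\infty}[2]\lesssim\ellbetanorm{f+\overline{f}_\infty}[2]+\ellnorm{f+\overline{f}_\infty}[2]$. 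It then remains to bound $\overline{f}_\infty$, and here part~(i) does the work: by Minkowski's integral inequality $\ellnorm{\overline{f}_\infty}[2]\le\int_{|t|>M}f_\infty(t)\,\ellnorm{\varphi(\cdot-t;t)}[2]\,\rmd t$ with $\ellnorm{\varphi(\cdot-t;t)}[2]\lesssim1$ (from boundedness of $\varphi$ under~\ref{item:zetaoo-a--lemma}), and then $\int_{|t|>M}f_\infty(t)\,\rmd t\le\rme^{-aM}\ellexpnorm{f_\infty}[1][a]\lesssim\rme^{-aM}\ellexpnorm{f}[1][a]$. For~(vi) one splits $|s|^\beta\le|s-t|^\beta+|t|^\beta$ inside $\overline{f}_\infty$ as well: the $|s-t|^\beta$ piece again costs $\rme^{-aM}$, while the $|t|^\beta$ piece is estimated using $|t|^\beta\le C_{a,\beta}\rme^{a|t|/2}$ on $\{|t|>M\}$, which produces the weaker but still exponentially small factor $\rme^{-aM/2}$.

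The main obstacle is the bookkeeping rather than any single hard estimate: one must match each of the six conclusions to the \emph{minimal} subset of conditions~\ref{item:zeta1-a--lemma}--\ref{item:foo_-M--lemma}, and track precisely how the weight $|t|^\beta$ interacts with the exponential tail of $f_\infty$ supplied by~(i), so as to obtain the sharp gain $\rme^{-aM}$ in~(v) but only $\rme^{-aM/2}$ in~(vi). A secondary technical point is the rigorous justification that the tail $\Phi^{n+1}f_\infty$ may be discarded when passing from~\eqref{eq:f_infty_bound} to the series $\sum_{k\ge0}\Phi^{k}f$; this follows from the contraction property once $f_\infty$ is known to be finite in the norm at hand (as it is in the intended application), and in general from a monotone truncation argument.
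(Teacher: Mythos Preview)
Your proposal is correct and follows essentially the same approach as the paper: exploit that $\Phi$ (or its convolution majorant $h\mapsto h\ast\overline\varphi$) is a strict contraction in the relevant weighted norm, and for finite $M$ split $\Phi f_\infty$ into a near part dominated by $f_\infty\ast\overline\varphi$ and a far part $\overline f_\infty$ controlled via~(i). The only cosmetic differences are that the paper solves the one-step inequality $\|f_\infty\|\le\|f\|+\theta\|f_\infty\|$ directly rather than iterating to a Neumann series, and bounds $\ellnorm{\overline f_\infty}[2]$ and $\ellbetanorm{\overline f_\infty}[2]$ by $L^1$--$L^\infty$ interpolation rather than Minkowski's integral inequality; both routes use the same ingredients and yield the same constants.
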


\begin{proof}
Using~(\ref{eq:weight-trick-vonvolution}) to deal with weighted $L^p$-norms,
the bound~(\ref{eq:f_infty_bound}) easily yields, for all $a\geq0$ and $\beta\in(0,1]$,
\begin{align}\label{eq:f-f-infty-weights-norms-bounds-1}
\ellexpnorm{f_{\infty}}[1][a]&  \leq \ellexpnorm{f}[1][a] + \zetaexp{a}\ellexpnorm{f_{\infty}}[1][a]\;,\\
  \label{eq:f-f-infty-weights-norms-bounds-infty}
  \ellbetanorm{f_\infty} &\leq \ellbetanorm{f} +  \left(
\sup_t\ellbetanorm{\varphi(\cdot;t)}\right) \ellnorm{f_\infty} +\zetaexp{0}\ellbetanorm{f_\infty} \;. 
\end{align}
The bound~(\ref{eq:f-f-infty-weights-norms-bounds-1})
yields~\ref{item:Prop_tech_part3}.
Moreover, using, for any $\beta\in(0,1]$,  $C_{a,\beta} := \sup_{x \geq 0} x^\beta\ \rme^{-ax} \leq \sup_{x \geq 0}
x\ \rme^{-ax} \leq (\rme a)^{-1}$, we get that
$$
\sup_t\ellbetanorm{\varphi(\cdot;t)}=\sup_t\int\varphi(r; t)\ |r|^\beta\ \rmd r \leq\zetaexp{a}(\rme a)^{-1}\;.
$$
Using this in~(\ref{eq:f-f-infty-weights-norms-bounds-infty})
and~\ref{item:Prop_tech_part3} with $a=0$, we get~\ref{item:Prop_tech_part2}.

\noindent The definition of $\overline{\varphi}$ in the case $M=\infty$ allows
us to bound the second term of the
bound in~(\ref{eq:f_infty_bound}) by
\begin{equation}
  \label{eq:bound-case-M-infty-convolution}
\int f_\infty(t)\ \varphi(s-t; t)\ \rmd t \leq f_\infty\ast\overline{\varphi}(s)\;.  
\end{equation}
Using this, we get in turn that
$$
\ellnorm{f_\infty}[2]\ \leq\ \ellnorm{f}[2]+\ellnorm{f_\infty\ast\overline{\varphi}}[2]
\leq \ellnorm{f}[2]+\ellnorm{f_\infty}[2]\ellnorm{\overline{\varphi}}\;,
$$
which under
Condition~\ref{item:foo_-M--lemma} yields~\ref{item:Prop_tech_part7prim}.

\noindent Similarly to~(\ref{eq:f-f-infty-weights-norms-bounds-infty}) and with
the definition of $\overline{\varphi}$ in the case $M=\infty$, we have
$$
\ellbetanorm{f_\infty}[2] \leq \ellbetanorm{f}[2] +
\ellbetanorm{\overline{\varphi}}\ellnorm{f_\infty}[2]
+\ellnorm{\overline{\varphi}}\ellbetanorm{f_\infty}[2] \;. 
$$
Note that in the case $M=\infty$, $\ellbetanorm{\overline{\varphi}}\lesssim1$ as
a consequence of~\ref{item:zetaoo-a--lemma} with $a>0$.
Thus, using~\ref{item:Prop_tech_part7prim} to bound $\ellnorm{f_\infty}[2]$, under
Condition~\ref{item:foo_-M--lemma}, we get~\ref{item:Prop_tech_part8prim}.

\noindent Assertions~\ref{item:Prop_tech_part7}
and~\ref{item:Prop_tech_part8} are obtained similarly
as~\ref{item:Prop_tech_part7prim} and~\ref{item:Prop_tech_part8prim} but with
an additional step to deal with a finite $M$. Namely we have in this case
$$
\int f_\infty(t)\ \varphi(s-t; t)\ \rmd t \leq  f_\infty\ast\overline{\varphi}(s)+\overline{f}_\infty(s)\;.
$$
It follows that the same bounds as in~\ref{item:Prop_tech_part7prim}
and~\ref{item:Prop_tech_part8prim} applies but with $f$ replaced by
$f+\overline{f}_\infty$. Hence to obtain the bounds in~\ref{item:Prop_tech_part7}
and~\ref{item:Prop_tech_part8}, we only need to show that, under the
corresponding conditions, we have
\begin{align}
  \label{eq:toshow-7from7prim}
  \ellnorm{\overline{f}_\infty}[2] &\lesssim\rme^{-a\,M}\ellexpnorm{f}[1][a]\;, \\
  \label{eq:toshow-8from8prim}
  \ellbetanorm{\overline{f}_\infty}[2]&\lesssim\rme^{-a\,M/2}\ellexpnorm{f}[1][a]\;.
\end{align}
Observe that, by definition of $\overline{f}_\infty$, we have
\begin{align*}
&  \ellnorm{\overline{f}_\infty} = \int_{|t| >M} f_\infty(t)
  \left(\int \varphi(s-t;t)\ \rmd s\right) \rmd t\ \leq\ \zetaexp{0} \int_{|t| >M} f_\infty(t)\ \rmd t \ ,\\
& \ellnorm{\overline{f}_\infty}[\infty]=\sup_s \int_{|t| >M} f_\infty(t)\
  \varphi(s-t;t)\ \rmd t\ \leq \zetaexp[\infty]{0} \int_{|t| >M} f_\infty(t)\
  \rmd t\;.
\end{align*}
Using $\ellnorm{\overline{f}_\infty}[2]\leq \left( \ellnorm{\overline{f}_\infty}[\infty]
  \ellnorm{\overline{f}_\infty}\right)^{1/2}$,
$\displaystyle
\int_{|t| >M} f_\infty(t)\
  \rmd t
\ \leq\ \rme^{-aM}\ \ellexpnorm{f_\infty}[1][a]$
and the bound in~\ref{item:Prop_tech_part3} to bound
$\ellexpnorm{f_\infty}[1][a]$ with $\ellexpnorm{f}[1][a]$,
 we get~(\ref{eq:toshow-7from7prim}).

\noindent Finally, we prove~(\ref{eq:toshow-8from8prim}). First we note that,
using~(\ref{eq:weight-trick-vonvolution}), 
for $q=1,\infty$, we have
$$
\ellbetanorm{\overline{f}_\infty}[q]\leq \left(\sup_t\ellbetanorm{\varphi(\cdot;t)}[q]\right)
\left(\int_{|t| >M} f_\infty(t)\
  \rmd t\right)+
\left(\sup_t\ellnorm{\varphi(\cdot;t)}[q]\right) \left(\int_{|t| >M} f_\infty(t)\ |t|^\beta\ 
  \rmd t \right)\;.
  $$
  The bound~(\ref{eq:toshow-8from8prim}) then follows similarly
  as~(\ref{eq:toshow-7from7prim}) by using
  $\ellbetanorm{\overline{f}_\infty}[2]\leq \left( \ellbetanorm{\overline{f}_\infty}[\infty]
  \ellbetanorm{\overline{f}_\infty}\right)^{1/2}$ and (with the constant
$C_{a/2,\beta}$ defined as above)
$$
\int_{|t| >M} f_\infty(t)\ |t|^\beta\ 
  \rmd t \leq C_{a/2,\beta}\rme^{-a\,M/2}\ellexpnorm{f_\infty}[1][a]\leq2(\rme a)^{-1}\rme^{-a\,M/2}\ellexpnorm{f_\infty}[1][a]\;.
$$ 
This concludes the proof.
\end{proof}

\subsection{Preliminaries}
\label{sec:preliminaries}
In these preliminaries, we only require~(\ref{eq:cond-density-nonstat})
and~(\ref{eq:NS-additional}) to hold as they are sufficient to define a
non-stationary Hawkes
process. Under~\ref{ass:locally-stat-hawk-finite-expbounds}, for any given
$T\geq1$, these conditions are satisfied by the parameters of the
non-pstationary Hawkes process $N=N_T$, with immigrant intensity
$\lambda_{cT}(t)=\locstat{\lambda}_c(t/T)$ and varying fertility function
$p_T(u;t)=\locstat{p}(u;t/T)$.

Let $g \in L^1 \cap L^\infty$, hence $g \in L^2$, too.  Since $N_c$ is a
Poisson point process with intensity $\lambda_c$ and the clusters
$N(\cdot|t^c)$ can be seen as conditionally independent marks of this Poisson
process, we have
\begin{equation}
  \label{eq:var-condvar-condexp}
  \var(N(g)) \ = \ \ \int \var\left(N(g|t^c)\right)\ \lambda_c(t^c)\ \rmd t^c
  + \int \left(\esp[N(g|t^c)]\right)^2\ \lambda_c(t^c)\ \rmd t^c
\end{equation}
From~[Corollary~8 and
Proposition~11]\cite{roueff-vonsachs-sansonnet2016} applied to the function
$(s,z) \mapsto s\ g(z)$, we obtain that the applications
$t \mapsto \esp[N(g|t)]$ and $t \mapsto \var(N(g|t))$ are fixed points of two
$L^1\to L^1$ operators $h \mapsto \mce(h)$ and $h \mapsto \mce[\tilde g](h)$
defined as follows. The operator $\mce$ is defined by
\begin{equation}
\label{eq:new_fixed_point} \forall\ s \in \Rset, \quad \mce(h)(s)\ =\ g(s)\
+\ \int h(t)\ p(t-s; t)\ \rmd t\; .    
\end{equation}
 And the operator $\mce[\tilde g]$ is
defined similarly but with $g$ replaced by the function
$$
\tilde{g}(s)\ =\  \int (\esp[N(g|t)])^2\ p(t-s; t)\ \rmd t\; .
$$
By the first condition in~(\ref{eq:cond-density-nonstat}), $\mce$ and
$\mce[\tilde g]$ are strictly contracting and thus admit a unique fixed
point.  We denote this fixed point by $\mcei$ (resp. $\mcei[\tilde{g}]$) in the following.

Hence, to summarize, the computation of $\var(N(g))$ boils down to the formula
\begin{equation}
  \label{eq:var-with-fiwed-points}
  \var(N(g)) \ = \ \ \int \mcei[\tilde g](t) \lambda_c(t^c)\ \rmd t^c
  + \int \left(\mcei(t)\right)^2\ \lambda_c(t^c)\ \rmd t^c\;,
\end{equation}
where $\mcei$ and $\mcei[\tilde g]$ are the unique fixed points of the
$L^1\to L^1$ operators $\mce$ and $\mce[\tilde g]$, with $\mce$ defined
by~(\ref{eq:new_fixed_point}) and $\mce[\tilde g]$ defined similarly but with
$g(s)$ replaced by
\begin{equation}
  \label{eq:tide-g-def}
\tilde{g}(s)\ =\  \int (\mcei(t))^2\ p(t-s; t)\ \rmd t\; .
\end{equation}

\subsection{Decomposition of the approximation}
The framework introduced in Section~\ref{sec:preliminaries} applies under the
assumptions of Theorem~\ref{thm:new-var-approx} for computing both
$\var\left(N_T(S^{-Tu}g))\right)$ and $\var\left(N(g;u)\right)$.  For
simplicity and without loss of generality we take $u=0$ in the following and
thus wish to approximate $\var\left(N_T(g))\right)$ with
$\var\left(\stat{N}(g)\right)$, with $\stat{N}(g):=N(g;0)$.  Then
$\var\left(N_T(g))\right)$ and $\var\left(N(g;0)\right)$ satisfy
Eq.~(\ref{eq:var-with-fiwed-points}) by adapting the definitions of $\mce$,
$\mcei$, $\tilde{g}$ and $\mcei[\tilde g]$ to the corresponding $p(t-s; t)$ and
$\lambda_c$. Namely, to compute $\var\left(N_T(g))\right)$, we apply these
equations and definitions with $p(t-s; t) = \locstat{p}(t-s;t/T)$ and
$\lambda_c(t^c)=\locstat{\lambda}_c(t^c/T)$, while to compute
$\var\left(N(g;0))\right)$, we apply them with
$p(t-s; t) = \stat{p}(t-s):=\locstat{p}(t-s;0)$ and
$\lambda_c(t^c)=\stat{\lambda}_c:=\locstat{\lambda}_c(0)$.  To distinguish
between these two cases, we use the corresponding notation $\mcet$, $\mceti$,
$\tilde{g}^{(T)}$ and $\mceti[\tilde g]$ in the first case and $\mces$,
$\mcesi$, $\stat{\tilde{g}}$ and $\mcesi[\tilde g]$ in the
second case.

\noindent Applying~(\ref{eq:var-with-fiwed-points}) then yields the bound
\begin{align}
  \label{eq:7}
  \left| \var(N_T(g)) \right| \ \leq\ 
  \ellnorm{\locstat{\lambda}_c}[\infty]\left(\ellnorm{\mceti[\tilde]}+\ellnorm{\mceti}[2]^2\right)\;,
\end{align}
and the approximation bound
\[
\left| \var(N_T(g)) - \var(\stat{N}(g)) \right| \ \leq\ (A)\ +\ (B)\ + \ (C)\ + \ (D)
\]
with
\begin{align*}
&(A)\ =\ \int \left(\mcesi(t^c)\right)^2\ \left|\locstat{\lambda}_c\left(\frac{t^c}{T}\right)-\stat{\lambda}_c\right|\ \rmd t^c\; ,\\
&(B)\ =\ \ellnorm{\locstat{\lambda}_c}[\infty]
  \int \left|\left(\mceti(t^c)\right)^2 - \left(\mcesi(t^c)\right)^2\ \right|\ \rmd t^c\; ,\\
&(C)\ =\ \int \mcesi[tilde](t^c)\ \left|\locstat{\lambda}_c\left(\frac{t^c}{T}\right)-\stat{\lambda}_c\right|\ \rmd t^c\; ,\\
&(D)\ =\ \ellnorm{\locstat{\lambda}_c}[\infty] \int \left|\mceti[tilde](t^c) -\mcesi[tilde](t^c) \right|\ \rmd t^c\; .
\end{align*}
Note that in (A) and $(C)$, using \ref{ass:locally-stat-hawk-center-intensity-lip} we can bound 
$|\locstat{\lambda}_c(t^c/T)-\stat{\lambda}_c |=|\locstat{\lambda}_c(t^c/T)-\locstat{\lambda}_c(0) |
\ \leq\   \holderlsc\ T^{-\beta}\ |t^c|^\beta$.
Hence, these four terms can be bounded using the previously introduced weighted
norms, and we get
\begin{multline}
  \label{eq:var-approx-allweightednorms}
\left| \var(N_T(g)) - \var(\stat{N}(g)) \right|
\lesssim T^{-\beta} \left(\ellbetanorm{\mcesi[g]}[2][\beta/2]^2+\ellbetanorm{\mcesi[\tilde g]}\right)\\+
  \ellnorm{\left(\mceti[g]\right)^2 - \left(\mcesi[g]\right)^2}+ \ellnorm{\mceti[\tilde g] - \mcesi[\tilde g]}\;.
\end{multline}
To bound these norms, we successively apply Lemma~\ref{lem:Prep_Proof_Thm4} in various settings.
\subsection{Successive applications of Lemma~\ref{lem:Prep_Proof_Thm4}}
\paragraph{Norms involving $\mcesi[g]$~:}
We apply Lemma~\ref{lem:Prep_Proof_Thm4} with
$f=|g|$, $\varphi(u;t)=\overline{\varphi}(u)= \stat{p}(-u)= \locstat{p}(-u;0)$
and $f_\infty = \left|\mcesi[g]\right|$. In
this setting Eq.~(\ref{eq:f_infty_bound}) is inherited from the fact that
$\mcesi$ is a fixed point of $\mces$. Conditions~\ref{item:zeta1-a--lemma}, \ref{item:zetaoo-a--lemma}
and~\ref{item:foo_-M--lemma} hold with $a=d>0$ and $M=\infty$ as consequences
of~\ref{ass:locally-stat-hawk-finite-expbounds}.

\noindent Assertions~\ref{item:Prop_tech_part7prim} and~\ref{item:Prop_tech_part8prim} of
Lemma~\ref{lem:Prep_Proof_Thm4} then respectively give
\begin{align}
  \label{eq:1}
  \ellnorm{\mcesi[g]}[2]&\lesssim\ellnorm{g}[2]\\
  \label{eq:2}
  \ellbetanorm{\mcesi[g]}[2][\beta/2]&\lesssim\ellbetanorm{g}[2][\beta/2]+\ellnorm{g}[2]\;.
\end{align}
\paragraph{Norms involving $\mceti[g]$~:}
We apply Lemma~\ref{lem:Prep_Proof_Thm4} with
$f=|g|$, $\varphi(s-t;t)= \locstat{p}(t-s;t/T)$
and $f_\infty = \left|\mcesi[g]\right|$. In
this setting Eq.~(\ref{eq:f_infty_bound}) is inherited from the fact that
$\mceti$ is a fixed point of $\mcet$. Conditions~\ref{item:zeta1-a--lemma} and \ref{item:zetaoo-a--lemma}
 hold with $a=d>0$  as consequences
of~\ref{ass:locally-stat-hawk-finite-expbounds}.
To check~\ref{item:foo_-M--lemma}, we need to choose an appropriate $M<\infty$.
From~\ref{ass:locally-stat-hawk-finite-expbounds}
and~\ref{ass:locally-stat-hawk-unif-lip}, we have
$\overline{\varphi}(u) \leq \locstat{p}(-u;0) + \left(\frac{M}{T}\right)^\beta\
\holderls(-u)$ and thus 
$$
\tilde\zzeta\leq\zetaexpls{0}+ \left(\frac{M}{T}\right)^\beta\
\ellnorm{\holderls}\;.
$$ 
Since $\zetaexpls{0}\leq\zetaexpls{d}<1$ and $\ellnorm{\holderls}<\infty$, we
can define $\varepsilon>0$ small enough, depending only on these two constants
(hence $\varepsilon^{-1}\lesssim1$),
such that, if we set $M=\varepsilon T$ then we have
$\tilde\zzeta\leq\zetaexpls{0}^{1/2}<1$ and \ref{item:foo_-M--lemma} follows.

\noindent Assertions~\ref{item:Prop_tech_part7} and~\ref{item:Prop_tech_part8}
of  Lemma~\ref{lem:Prep_Proof_Thm4} then respectively give
\begin{align}
  \label{eq:3}
    \ellnorm{\mceti[g]}[2]&\lesssim\ellnorm{g}[2]+\rme^{-d \varepsilon T}\ellexpnorm{g}[1][d]\\
  \label{eq:4}
  \ellbetanorm{\mceti[g]}[2][\beta]&\lesssim\ellbetanorm{g}[2][\beta]+
\rme^{-d\varepsilon T/2}  \ellexpnorm{g}[1][d]+    \ellnorm{g}[2]\;.
\end{align}
\paragraph{Norms involving $\mcesi[tilde]$~:}
Applying Lemma~\ref{lem:Prep_Proof_Thm4}~\ref{item:Prop_tech_part2} with $f=|\stat{\tilde{g}}|$
and $f_\infty=\left|\mcesi[tilde]\right|$ and $\phi(u;t)=\stat{p}(-u)$, we get
that $\ellbetanorm{\mcesi[tilde]}\lesssim\ellbetanorm{\stat{\tilde{g}}} +
    \ellnorm{\stat{\tilde{g}}}$.
    \noindent By definition of $\stat{\tilde{g}}$ (adapted
    from~(\ref{eq:tide-g-def}) with $p(s-t;t):=\stat{p}(s-t)$ and $\mcei$ replaced by $\mcesi$), we have
    $\ellnorm{\stat{\tilde{g}}}=\ellnorm{\mcesi}[2]^2\ellnorm{\stat{p}}$ and,
    using~(\ref{eq:weight-trick-vonvolution}),
    $\ellbetanorm{\stat{\tilde{g}}}=\ellbetanorm{\mcesi}[2][\beta/2]^2\ellnorm{\stat{p}}+
    \ellnorm{\mcesi}[2]^2\ellbetanorm{\stat{p}}$.
    By~\ref{ass:locally-stat-hawk-finite-expbounds}, we have
    $\ellnorm{\stat{p}}$, $\ellbetanorm{\stat{p}}\lesssim1$. Hence we finally
    get that
\begin{align}
\label{eq:11}  \ellbetanorm{\mcesi[tilde]}\lesssim\ellnorm{\mcesi}[2]^2+ \ellbetanorm{\mcesi}[2][\beta/2]^2\;.
\end{align}
\paragraph{Norms involving $\mceti[tilde]$~:}
We proceed as in the previous case
and get that
$\ellnorm{\mceti[tilde]}\lesssim\ellnorm{\tilde{g}^{(T)}}$ and
$\ellbetanorm{\mceti[tilde]}\lesssim\ellbetanorm{\tilde{g}^{(T)}} +
\ellnorm{\tilde{g}^{(T)}}$.

\noindent Now $\tilde{g}^{(T)}$ is defined as in~(\ref{eq:tide-g-def}) with
$p(s-t;t):=\locstat{p}(s-t;t/T)$ and $\mcei$ replaced by $\mceti$. We thus have 
$\ellnorm{\tilde{g}^{(T)}}\leq\zetaexpls{0}\ellnorm{\mceti}[2]^2$ and,
using~(\ref{eq:weight-trick-vonvolution}),
$$
\ellbetanorm{\tilde{g}^{(T)}}\leq\zetaexpls{0}\ellbetanorm{\mceti}[2][\beta/2]^2+
\left(\sup_{r}\ellbetanorm{\locstat{p}(\cdot;r)}\right)
\ellnorm{\mceti}[2]^2\;.
$$
    By~\ref{ass:locally-stat-hawk-finite-expbounds}, we have
    $\sup_r\ellbetanorm{\locstat{p}(\cdot;r)}\lesssim1$. Hence we finally
    get that
\begin{align}
\label{eq:13}  \ellnorm{\mceti[tilde]}[1]&\lesssim
  \ellnorm{\mceti}[2]^2\;,  \\
  \label{eq:12}  \ellbetanorm{\mceti[tilde]}[1]&\lesssim
  \ellnorm{\mceti}[2]^2+ \ellbetanorm{\mceti}[2][\beta/2]^2\;.  
\end{align}
\paragraph{Norms involving $\mceti[g]-\mcesi[g]$~:}
Using that $\mceti[g]$ and $\mcesi[g]$ are fixed points of $\mcet$ and $\mces$,
we find that $g_\infty:=\mcesi[g]-\mceti[g]$ satisfies
$$
g_\infty(s) = \int \mceti[g](t)\ \left( \stat{p}(t-s)-\locstat{p}(t-s;t/T)\right)\ \rmd t+
\int g_\infty(t)\ \stat{p}(t-s)\ \rmd t \ \ .
$$
Hence taking absolute values $f_\infty:=|\mcesi[g]-\mceti[g]|$
satisfies~(\ref{eq:f_infty_bound}) 
with
  $$
f(s):=\int |\mceti[g](t)|\ \left|\locstat{p}(t-s;t/T)- \stat{p}(t-s)\right|\ \rmd t\;.
$$
and $\varphi(u;t)=\stat{p}(-u)$. As previously
Conditions~\ref{item:zeta1-a--lemma}, \ref{item:zetaoo-a--lemma}
and~\ref{item:foo_-M--lemma} hold with $a=d>0$ and $M=\infty$ as consequences
of~\ref{ass:locally-stat-hawk-finite-expbounds}. Assertion~\ref{item:Prop_tech_part7prim}
of  Lemma~\ref{lem:Prep_Proof_Thm4} then gives that
$\ellnorm{\mcesi[g]-\mceti[g]}[2]\lesssim\ellnorm{f}[2]$ with $f$ as in the
previous display. By~\ref{ass:locally-stat-hawk-unif-lip}, we further have that
\begin{equation}
  \label{eq:diff-p-stat-locstat}
\left|\locstat{p}(t-s;t/T)- \stat{p}(t-s)\right|\leq T^{-\beta}\,\holderls(t-s)\, \left|t\right|^\beta\;,  
\end{equation}
and thus
\begin{align}\label{eq:6}
\ellnorm{f}[2]=T^{-\beta}\ellnorm{\left(|\mceti[g](\cdot)|\, |\cdot|^\beta\right)\ast
  \holderls}[2]\leq
T^{-\beta}\ellbetanorm{\mceti[g]}[2][\beta] \ellnorm{\holderls}[1]\;.
\end{align}
 Hence, with~(\ref{eq:4}), we finally obtain that
\begin{align}
  \label{eq:5}
  \ellnorm{\mcesi[g]-\mceti[g]}[2]\lesssim
T^{-\beta}\left(  \ellbetanorm{g}[2][\beta]+
\rme^{-d\varepsilon T/2}  \ellexpnorm{g}[1][d]+    \ellnorm{g}[2] \right) \;.
\end{align}
\paragraph{Norms involving $\mceti[tilde]-\mcesi[tilde]$~:}
We apply the same line of reasoning as in the previous case.
Using that $\mceti[tilde]$ and $\mcesi[tilde]$ are fixed point of $\mcet[tilde]$ and $\mces[tilde]$,
we find that $f_\infty:=|\mcesi[\tilde g]-\mceti[\tilde g]|$ satisfies~(\ref{eq:f_infty_bound}) 
with
$$
f(s):= \left|\stat{\tilde{g}}(s)-\tilde{g}^{(T)}(s)\right|+\int  \left|\mceti[\tilde g](t)\right|\ \left| \stat{p}(t-s)-\locstat{p}(t-s;t/T)\right|\ \rmd t\;,
$$
and $\varphi(u;t)=\stat{p}(-u)$. By definition of $\stat{\tilde{g}}$ and $\tilde{g}^{(T)}$ (both adapted
from~(\ref{eq:tide-g-def})), we further have
\begin{align*}
\left|\stat{\tilde{g}}(s)-\tilde{g}^{(T)}(s)\right|
  &\leq
    \int \left|\mceti(t)\right|^2\ \left|\stat{p}(t-s)-\locstat{p}(t-s;t/T)\right|\ \rmd t\\
&\ \ \    +     \int \left|\left(\mcesi(t)\right)^2-\left(\mceti(t)\right)^2\right|\ \stat{p}(t-s)\ \rmd t\; .
\end{align*}
Hence, using~(\ref{eq:diff-p-stat-locstat}), we get that
$$
\ellnorm{f}\leq
T^{-\beta}\,
\left(\ellbetanorm{\mceti}[2][\beta/2]^2+\ellnorm{\mceti[\tilde g]}\right)\ \ellnorm{\holderls}\, 
+
\ellnorm{\left(\mcesi\right)^2-\left(\mceti\right)^2}\ \ellnorm{\stat{p}}\;.
$$
Since $\ellnorm{\stat{p}},\ellnorm{\holderls}\lesssim 1$
under~\ref{ass:locally-stat-hawk-finite-expbounds}
and~\ref{ass:locally-stat-hawk-unif-lip},
Lemma~\ref{lem:Prep_Proof_Thm4}~\ref{item:Prop_tech_part3} with $a=0$ thus yields
\begin{align}
\label{eq:10}\ellnorm{\mcesi[\tilde g]-\mceti[\tilde g]}\lesssim
T^{-\beta}\,
\left(\ellbetanorm{\mceti}[2][\beta/2]^2+\ellbetanorm{\mceti[\tilde g]}\right)\
+ \ellnorm{\left(\mcesi\right)^2-\left(\mceti\right)^2}\;.
\end{align}
\subsection{Conclusion of the proof}
We can now gather the obtained bounds to conclude the proof of
Theorem~\ref{thm:new-var-approx}.

\noindent The bounds~(\ref{eq:7}),~(\ref{eq:13})
and~(\ref{eq:3}) gives~(\ref{eq:bound-of-var-ell2}) (recall that $\varepsilon^{-1}\lesssim1$).

\noindent Finally we prove~(\ref{eq:new-var-approx-hawkes-local}). Using
~(\ref{eq:var-approx-allweightednorms}),
and~(\ref{eq:10}), we first obtain that
\begin{align*}
&\left| \var(N_T(g)) - \var(\stat{N}(g)) \right|
\lesssim T^{-\beta}(\rmI)+\ellnorm{\left(\mceti[g]\right)^2 - \left(\mcesi[g]\right)^2}
\;,\\
\text{with}\qquad&(\rmI):=\ellbetanorm{\mcesi[g]}[2][\beta/2]^2+\ellbetanorm{\mcesi[\tilde g]}+
  \ellbetanorm{\mceti}[2][\beta/2]^2+\ellbetanorm{\mceti[\tilde g]}\;.
\end{align*}
The bounds~(\ref{eq:11}) and~(\ref{eq:12}) and
then~(\ref{eq:1}),~(\ref{eq:2}),~(\ref{eq:3}),~(\ref{eq:4}) (with $\beta/2$
instead of $\beta$) further give
\begin{align*}
(\rmI)&\lesssim
\ellbetanorm{\mcesi[g]}[2][\beta/2]^2+\ellnorm{\mcesi}[2]^2+
        \ellbetanorm{\mceti}[2][\beta/2]^2+\ellnorm{\mceti}[2]^2\\
      &\lesssim \ellnorm{g}[2]^2+\ellbetanorm{g}[2][\beta/2]^2+ \rme^{-A_1 T}  \ellexpnorm{g}[1][d]^2
        \;.
\end{align*}
Using the Hölder inequality and then~(\ref{eq:5}),~(\ref{eq:1})
and~(\ref{eq:3}), we get
\begin{align*}
  \ellnorm{\left(\mceti[g]\right)^2 - \left(\mcesi[g]\right)^2}
  &\leq
    \ellnorm{\mceti[g]-\mcesi[g]}[2]
    \left(\ellnorm{\mceti[g]}[2]+\ellnorm{\mcesi[g]}[2]\right)\\
  &\lesssim
    T^{-\beta}\left(  \ellbetanorm{g}[2][\beta]+
    \rme^{-A_1T}  \ellexpnorm{g}[1][d]+    \ellnorm{g}[2] \right)
        \left(\ellnorm{g}[2]+\rme^{-A_1 T}  \ellexpnorm{g}[1][d]\right)\;.
\end{align*}
Using that the products can be bounded by the sum of squares,
the previous displays yield
\begin{multline*}
  \left| \var(N_T(g)) - \var(\stat{N}(g))
\right|\\\lesssim  T^{-\beta}
\left\{\ellbetanorm{g}[2][\beta/2]^2+ \ellnorm{g}[2]^2+
  \rme^{-A_1 T}\ellexpnorm{g}[1][d]^2
  +\ellbetanorm{g}[2][\beta]\left(\ellnorm{g}[2]+\rme^{-A_1 T}\ellexpnorm{g}[1][d]\right)
\right\}\;.
\end{multline*}
Now, by the Hölder inequality, we have
$\ellbetanorm{g}[2][\beta/2]^2\leq\ellnorm{g}[2]\ellbetanorm{g}[2][\beta]$,
hence the first term inside the curly brackets can be removed by increasing the
multiplicative constant by a factor 2 and we finally
get~(\ref{eq:new-var-approx-hawkes-local}), which concludes the proof of the theorem.

\section{Proof of main results}
\label{sec:proofs}
\subsection{Proof of Theorem~\ref{thm:est_loc_mean_intensity} (local mean density estimation)}
\label{sec:proof-theor-local-mean-est}

For treating the bias, expressed as~(\ref{eq:bias-mean-Nbar-expr}), we
apply~(\ref{eq:mean-approx-hawkes-local}) in
Theorem~\ref{thm:mean-var-approx} with $g=\ktbtime$.
Using the norm estimates of equations~(\ref{eq:W-onenorm}) and~(\ref{eq:W-betanorm}), 
we immediately get
\[
\esp[ \widehat{m}_{\btime}(u_0)] - m_1(u_0) \lesssim T^{-\beta}\ \left(1 + \ellbetanorm{\ktbtime}  \right)\lesssim \btime^\beta + T^{-\beta}\ .
\]
For treating the variance, expressed as~(\ref{eq:var-mea-est-expr-barbar}), we use
Proposition~\ref{prop:burkh-ineq} with $h(\cdot)=\ktbtime(\cdot-Tu_0)$ along
with 
$\ellnorm{h}[\infty] = \ellnorm{\ktbtime}[\infty] =(T\btime)^{-1}$
by~(\ref{eq:W-inftynorm}) and the obvious bound on the
support of the kernel, $\supp(\ktbtime) \lesssim T\btime$. We immediately
get~(\ref{eq:var_loc_mean_dens_est}), which concludes the proof.

\subsection{Proof of Theorem~\ref{thm:total-bias} (Bias of spectral estimator)}
\label{sec:proof-theor-bias-bartlett-est}

The proof of this theorem requires to show two bounds, namely, the bound of the
bias in time direction,~(\ref{eq:total_bias}), and the bound of the
bias in frequency direction,~(\ref{eq:freq_bias}). These two bounds are proved
quite independently. 
\begin{proof}[Proof of~(\ref{eq:total_bias})]
  The derivations of Section~\ref{sec:bias-appr-cent},
  namely~(\ref{eq:est_two_terms_decomp}),~(\ref{eq:est-first-order-square-centered})
  and~(\ref{eq:est-first-order-centered}), show that we can decompose $\esp
  \left[
    \widehat{\gamma}_{\bfreq,\btime}(u_0;\omega_0)-\locstat{\gamma}_{\bfreq}(u_0;\omega_0)\right]$
  as $(\rmI) + (\rmI\rmI) - (\rmI\rmI\rmI)$, where
\begin{align*}
 (\rmI)  & =
\int
\left(\var\left(N_T(S^{-Tu_0}\kbofreq(\cdot-t))\right)-\locstat{\gamma}_{\bfreq}(u_0;\omega_0)\right)\ktbtime(t)\
\rmd t\\
(\rmI\rmI) & =\int \left|\esp[\overline{N}_T(S^{-Tu_0}\kbofreq(\cdot-t);u_0)]\right|^2\ktbtime(t)\ \rmd t \\
(\rmI\rmI\rmI) & =
\underbrace{\var\left(N_T(S^{-Tu_0}\kbofreq\ast\ktbtime)\right)}
+
\underbrace{\left|\esp\left[\overline{N}_T(S^{-Tu_0}\kbofreq\ast\ktbtime;u_0)\right]\right|^2}\\
& \hspace{2cm} (\rmI\rmI\rmI {\rm a})\hspace{5cm} (\rmI\rmI\rmI {\rm b})
\end{align*}
correspond
to~(\ref{eq:bias-bartlett-main-term}),~(\ref{eq:bias-bartlett-neg-term2})
and~(\ref{eq:centering-term-bartlesst-expect-decomp}), respectively.
We will show now that
\begin{enumerate}[label=(\roman*)]
\item\label{item:I} 
the term (I) is of order $\btime^\beta$; 
\item\label{item:II-IIIb} the terms (II) and (IIIb) are of order $\btime^{2\beta} \bfreq^{-1}$;
\item\label{item:IIIa} the term (IIIa) is of order $(T\btime\bfreq)^{-1}\ ; $ 
\end{enumerate}
which will conclude the proof of~(\ref{eq:total_bias}).  

\noindent\textbf{Term (I)}: By~(\ref{eq:new-var-approx-hawkes-local}) in Theorem~\ref{thm:new-var-approx}
with $g=\kbofreq(\cdot-t)$,  and recalling from equations~(\ref{eq:K-twonorm}),
(\ref{eq:K-2betanorm}), (\ref{eq:K-a1norm}), (\ref{eq:betnorm-shifted-bound})
and (\ref{eq:a1norm-shifted-bound})  that $\ellnorm{g}[2] = 1$,
$\ellbetanorm{g}[2]\ \lesssim\ \bfreq^{-\beta} + |t|^\beta$ and
\begin{align*}
\ellexpnorm{g}[1][d]\ \leq\ \rme^{d|t|} \ellexpnorm{\kbofreq}[1][d]\
  \lesssim\ \rme^{d|t|}\ \bfreq^{-1/2} \rme^{A_2\bfreq^{-1}} \lesssim \rme^{d|t|}\
  \rme^{A_2\bfreq^{-1}}  \; ,
\end{align*}
we have for all $t\in\rset$, 
\begin{align*}
\left|\var\left({N_T}(S^{-Tu_0}\kbofreq(\cdot-t))\right) -
  \var\left({N}\left(\kbofreq; u_0\right)\right)\right|
  \lesssim
  T^{-\beta}
  \left(\bfreq^{-\beta} + |t|^\beta+\rme^{A_2|t|}\rme^{A_2\bfreq^{-1}-A_1T}\right)
\end{align*}
where we have used that $1\lesssim\bfreq^{-\beta} \lesssim \rme^{\bfreq^{-1}}$
and that $|t|^\beta \lesssim \rme^{|t|}$.  

\noindent By~(\ref{eq:regularized-new}), we can use this to bound the integrand in the
definition of $(\rmI)$ and thus get, using~(\ref{eq:W-onenorm}),
(\ref{eq:W-betanorm}) and~(\ref{eq:Ktime-a1norm}),
$$
(\rmI)\lesssim
T^{-\beta} \left( \bfreq^{-\beta}  + (T\btime)^\beta +  \rme^{-A_1 T+ A_2 (T\btime+\bfreq^{-1})} \right) \; .
$$
By~(\ref{eq:conditions-T-and-co-timebias-bartlett})
and applying Lemma~\ref{lemma-expo-neglig}, we have that the main term between
the parentheses is the second one, hence we get~\ref{item:I}. 

\noindent\textbf{Term (II)}: applying~(\ref{eq:mean-approx-hawkes-local})  in Theorem~\ref{thm:mean-var-approx}
with $g=\kbofreq(\cdot-t)$ and
using~(\ref{eq:K-betanorm}),~(\ref{eq:K-onenorm})
and~(\ref{eq:betnorm-shifted-bound}), we get 
\begin{align}\nonumber
 \left|\esp[\overline{N}_T(S^{-Tu_0}\kbofreq(\cdot-t);u_0)]\right|
&\lesssim
 T^{-\beta}\left(\bfreq^{-1/2}(1+|t|^\beta)+\bfreq^{-1/2-\beta}\right)\\
&\label{eq:bias-t-u_0-shift}
\lesssim
 T^{-\beta}\left(\bfreq^{-1/2}|t|^\beta+\bfreq^{-1/2-\beta}\right)\;. 
\end{align}
(Since $\bfreq\leq1$)
Taking the square and integrating this with respect to $\ktbtime(t)\rmd t$, and
using~(\ref{eq:W-betanorm}),~(\ref{eq:W-onenorm}) and $T\geq1$, we get
$(\rmI\rmI)\lesssim \btime^{2\beta} \bfreq^{-1}$ as claimed in~\ref{item:II-IIIb}. 

\noindent\textbf{Term (IIIb)}: This term is treated similarly as (II) but using
directly the bounds~(\ref{eq:g-onenorm}) and~(\ref{eq:g-betanorm})
inserted into~(\ref{eq:mean-approx-hawkes-local}) and taking the square. The same order
is obtained and~\ref{item:II-IIIb} follows. 

\noindent\textbf{Term (IIIa)}: Applying the bound~(\ref{eq:bound-of-var-ell2})
of Theorem~\ref{thm:new-var-approx}, we get that, setting here
$g=\kbofreq\ast\ktbtime$, 
$$
\var\left(N_T(g)\right)\lesssim\ellnorm{g}[2]^2+\rme^{- a T}\ellexpnorm{g}[1][d]^2 \;.
$$
Now, from (\ref{eq:g-twonorm}) and (\ref{eq:g-a1norm}), this bound reads
$$
\var\left(N_T(g)\right)\lesssim \bfreq^{-1} (T\btime)^{-1}+\rme^{-A_1 T+ A_2 (T\btime+\bfreq^{-1})} \;.
$$
By Lemma~\ref{lemma-expo-neglig} and since $T\btime\bfreq\geq1$, the first term
dominates and we get~\ref{item:IIIa}, which concludes the proof.
\end{proof}

We now provide a proof for the second part of the theorem controlling the bias.
 \begin{proof}[Proof of~(\ref{eq:freq_bias})] This bound requires the usual control of
   the kernel-regularization of a smooth function as can be seen
   from~(\ref{eq:regularized}). Namely, the function to consider is 
   $\omega\mapsto\locstat{\gamma}(\omega;u_0)$ and the kernel is
$\omega\mapsto \bfreq^{-1}
\left|\fkfreq\left((\omega-\omega_0)/\bfreq\right)\right|^2$ which
integrates to 1 since  $\ellnorm{Q}[2]=\ellnorm{q}[2]/\sqrt{2\pi}=1$
by~\ref{ass:kfreq-comp-support} and the Parseval theorem.
Using the formula~(\ref{eq:local-bartlett-spectral-density}) to
express $\omega\mapsto\locstat{\gamma}(\omega;u_0)$ and the usual conditions on
the kernel~(\ref{eq:addtional-cond-Kkernek}), it is thus sufficient to prove
that, for all $\omega\in\rset$,
$$
\frac{\locstat{m}_1(u_0)}{2\pi}\left||1-\locstat{\fourierp}(\omega;u_0)|^{-2}-|1-\locstat{\fourierp}(\omega_0;u_0)|^{-2}+
C\,(\omega-\omega_0)\right| \lesssim (\omega-\omega_0)^2\;,
$$
where $C$ is any constant (possibly depending on $\omega_0$ and $u_0$ but not
on depending on $\omega$). 
As already seen, we have 
$$
\locstat{m}_1(u_0)\leq\ellnorm{\locstat{\lambda}_c}[\infty]/(1-\zetals)\lesssim1\;,
$$
so that, we can consider the ratio $\locstat{m}_1(u_0)/(2\pi)$ as a constant. 
For the remaining term involving the function $\omega\mapsto|1-\locstat{\fourierp}(\omega;u_0)|^{-2}$,
we first observe that
\begin{multline*}
 \locstat{\fourierp}(\omega;u_0)-\locstat{\fourierp}(\omega_0;u_0) \  = \int \locstat{p}(t;u_0) (\rme^{-\rmi\omega t }- \rme^{-\rmi\omega_0 t } )\ \rmd t\\
= \tilde{p}(\omega_0;u_0)\ (\omega-\omega_0) + \int \locstat{p}(t;u_0) \rme^{-\rmi\omega_0 t }\ \left(\rme^{-\rmi(\omega-\omega_0) t }- 1 - \rmi(\omega-\omega_0) t  \right)\  \rmd t\ ,
\end{multline*}
with $\tilde{p}(\omega_0;u_0) := \int \rmi\ t\ \locstat{p}(t,u_0)\ \rme^{-\rmi\omega_0 t }\ \rmd t\ .$
In the latter display, the first term is of the form $C(\omega-\omega_0)$ with
$|C|\lesssim 1$ and the second term is of order $(\omega-\omega_0)^2$. This
comes respectively from
$$
\left|\tilde{p}(\omega_0;u_0)\right|\leq 
\int |t|\ \locstat{p}(t;u_0) \rmd t\leq \zetals+\zetabet[2]
$$
and
$$
\left|
\int \locstat{p}(t,u_0) \rme^{-\rmi\omega_0 t }\ \left(\rme^{-\rmi(\omega-\omega_0) t }- 1 - \rmi(\omega-\omega_0) t  \right)\  \rmd t
\right|\leq \zetabet[2] \;(\omega-\omega_0)^2 \;. 
$$
To conclude the proof we argue that the form $C(\omega-\omega_0)+R(\omega)$ with
$C\lesssim1$ and $R(\omega)\lesssim(\omega-\omega_0)^2$ satisfied by  $\locstat{\fourierp}(\omega;u_0)-\locstat{\fourierp}(\omega_0;u_0)$
is inherited by 
 $|1-\locstat{\fourierp}(\omega;u_0)|^{-2}-|1-\locstat{\fourierp}(\omega_0;u_0)|^{-2}$.
This follows from 
$$
|1-\locstat{\fourierp}(\omega;u_0)|^{-1}\leq (1-\zetals)^{-1}\in(1,\infty)
$$
(using~(\ref{eq:locally-stat-hawk-finite-intensity})) and from the 
identity valid for all complex numbers $z,z'$ inside the unit disk
\[
\frac1{|1-z|^2}-\frac1{|1-z'|^2}\ =\ \frac{|1-z|^2-|1-z'|^2}{|1-z'|^4}\ +\ \frac{\left(|1-z|^2-|1-z'|^2\right)^2}{|1-z'|^4\ |1-z|^2}
\]
The numerator of the first term of this sum can be expressed as a sum of terms depending on the difference $|z-z'|$, and this applies to the numerator of the second term, too (as it is the square of the first numerator):
\[
|1-z|^2-|1-z'|^2\ =\ 2\Re(z-z')\Re(1-z') + 2\Im(z-z')\Im(1-z') \ +\ |z-z'|^2\;.
\]
\end{proof}
\subsection{Additional lemmas}
As explained in Section~\ref{sec:variance-terms-exact}, the treatment of
the variance is done via the introduction of the ``truly'' centered process
$\overline{\overline{N}}_T$. Here we provide three lemmas, two concerned with
useful bounds for this centered process and the third one which controls the
quality of the approximation of the estimator based on the centered process. 
\begin{lemma}
  \label{lem:useful-bounds-exact-centering}
  Let $p\geq1$.  Under the conditions of
  Theorem~\ref{thm:variance_loc_Bartlett_est}, we have, for all $u_0 \in
  \Rset$, $\omega_0 \in \Rset$, and $\btime,\bfreq,T$ as
  in~(\ref{eq:conditions-T-and-co-timebias-bartlett}),
  \begin{align}
\label{eq:varfstarw}
\left\|\overline{\overline{N}}_T(S^{-Tu_0}\kbofreq\ast \ktbtime)\right\|_p&\lesssim 
\left(T\btime\bfreq\right)^{-1/2}\;,
\end{align}
where $\overline{\overline{N}}_T$ is defined in~(\ref{eq:true-centering-def}).
Let moreover $h:\rset\to\cset$ be such that, for all $t\in\rset$, we
have
$$
|h(t)|\leq \left(a_T+b_T|t|^{\beta}\right)\ktbtime(t)\;,
$$
for two positive constants $a_T$ and $b_T$ (possibly depending on   $T$,
$\btime$ and $\bfreq$). Then we have, for all $u_0 \in
  \Rset$, $\omega_0 \in \Rset$, and $\btime,\bfreq,T$ as
  in~(\ref{eq:conditions-T-and-co-timebias-bartlett}),
\begin{align}
\label{eq:varfstarhw}
\left\|\overline{\overline{N}}_T(S^{-Tu_0}\kbofreq\ast h)\right\|_p&\lesssim 
(a_T+b_T(\btime T)^{\beta})
\left(T\btime\bfreq\right)^{-1/2}\;.
      \end{align}
\end{lemma}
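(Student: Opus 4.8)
The plan is to write $\overline{\overline{N}}_T(\cdot)=N_T(\cdot)-\esp[N_T(\cdot)]$ and apply the Burkhölder-type bound of Proposition~\ref{prop:burkh-ineq}, which is available because, under~\ref{ass:locally-stat-hawk-finite-expbounds}, the non-stationary Hawkes process $N=N_T$ satisfies~\ref{ass:non-stat-zeta-exp} for every $T\geq1$. Two elementary reductions come first. Since $\kbofreq$ (and hence $\kbofreq\ast\ktbtime$ and $\kbofreq\ast h$) is complex-valued, I split the test function into its real and imaginary parts, each of which has the same support and the same sup-norm bound, and use the triangle inequality for $\|\cdot\|_p$. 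Since Proposition~\ref{prop:burkh-ineq} is stated for $p\in[2,\infty)$, for $1\leq p<2$ I use $\|X\|_p\leq\|X\|_2$ (valid on a probability space) and invoke the $p=2$ case. Throughout I use that, under~\ref{ass:ktime-comp-support} and~\ref{ass:kfreq-comp-support}, $\ktime$ and $\kfreq$ are supported in some $[-\tilde a,\tilde a]$, so that $\supp(\kbofreq)$ has length $\lesssim\bfreq^{-1}$, $\supp(\ktbtime)$ has length $\lesssim T\btime$, and I exploit~(\ref{eq:conditions-T-and-co-timebias-bartlett}) in the form $\bfreq^{-1}\leq T\btime$ (hence also $T\btime\geq1$).

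For~(\ref{eq:varfstarw}), I set $g=S^{-Tu_0}\kbofreq\ast\ktbtime$. Its support is a translate of a subset of $\supp(\kbofreq)+\supp(\ktbtime)$, of length $\lesssim\bfreq^{-1}+T\btime\lesssim T\btime$, so $\supp(g)\subseteq[j,j+n]$ for some $j\in\zset$ and $n\in\nset$ with $n\lesssim T\btime$ (here $T\btime\geq1$ ensures this enclosure costs only a bounded factor). By~(\ref{eq:g-inftynorm}) of Lemma~\ref{lem:norm-estimates}, $\ellnorm{g}[\infty]=\ellnorm{\kbofreq\ast\ktbtime}[\infty]\lesssim(T\btime)^{-1}\bfreq^{-1/2}$. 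Proposition~\ref{prop:burkh-ineq} (with any fixed $q>p$) then gives $\|\overline{\overline{N}}_T(g)\|_p\lesssim\ellnorm{g}[\infty]\sqrt{n}\lesssim(T\btime)^{-1}\bfreq^{-1/2}(T\btime)^{1/2}=(T\btime\bfreq)^{-1/2}$, which is~(\ref{eq:varfstarw}).

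For~(\ref{eq:varfstarhw}), the additional ingredient is to restrict the weight $|t|^\beta$ to $\supp(h)\subseteq\supp(\ktbtime)\subseteq[-\tilde a T\btime,\tilde a T\btime]$: there $|t|^\beta\leq(\tilde a T\btime)^\beta$, so the hypothesis on $h$ yields $\ellnorm{h}[\infty]\leq(a_T+b_T(\tilde aT\btime)^\beta)\,\ellnorm{\ktbtime}[\infty]\lesssim(a_T+b_T(T\btime)^\beta)(T\btime)^{-1}$ by~(\ref{eq:W-inftynorm}). Setting $g=S^{-Tu_0}\kbofreq\ast h$, its support again has length $\lesssim\bfreq^{-1}+T\btime\lesssim T\btime$, and by Young's inequality $\ellnorm{g}[\infty]=\ellnorm{\kbofreq\ast h}[\infty]\leq\ellnorm{\kbofreq}[1]\,\ellnorm{h}[\infty]\lesssim\bfreq^{-1/2}(a_T+b_T(T\btime)^\beta)(T\btime)^{-1}$, using $\ellnorm{\kbofreq}[1]\lesssim\bfreq^{-1/2}$ from~(\ref{eq:K-onenorm}) — it is essential here to pair $\ellnorm{\kbofreq}[1]$ with $\ellnorm{h}[\infty]$ rather than $\ellnorm{\kbofreq}[\infty]$ with $\ellnorm{h}[1]$, so that the correct power $\bfreq^{-1/2}$ (and not $\bfreq^{1/2}$) appears. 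Proposition~\ref{prop:burkh-ineq} then gives $\|\overline{\overline{N}}_T(g)\|_p\lesssim\ellnorm{g}[\infty]\sqrt{n}\lesssim\bfreq^{-1/2}(a_T+b_T(T\btime)^\beta)(T\btime)^{-1}(T\btime)^{1/2}=(a_T+b_T(T\btime)^\beta)(T\btime\bfreq)^{-1/2}$, which is~(\ref{eq:varfstarhw}).

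The main obstacle is not analytic depth but bookkeeping consistent with Convention~\ref{conv:lesssim}: one must (i) check that the convolution of the two compactly supported kernels has support of length $\lesssim T\btime$, which relies crucially on $\bfreq^{-1}\leq T\btime$ from~(\ref{eq:conditions-T-and-co-timebias-bartlett}) rather than on any decay; (ii) choose the correct $L^1$–$L^\infty$ pairing in Young's inequality so that the half-power of $\bfreq$ is produced; and (iii) dispose of the complex-valued kernels and of the range $p\in[1,2)$ by the elementary reductions above. Everything else is a direct substitution into Proposition~\ref{prop:burkh-ineq} and the norm identities of Lemma~\ref{lem:norm-estimates}.
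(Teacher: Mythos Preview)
Your proof is correct and follows essentially the same route as the paper: apply Proposition~\ref{prop:burkh-ineq} to $N_T$, bound the support length of the convolution by $\bfreq^{-1}+T\btime\lesssim T\btime$ via~(\ref{eq:conditions-T-and-co-timebias-bartlett}), and control the sup-norm of the test function through~(\ref{eq:g-inftynorm}) and~(\ref{eq:K-onenorm}). The only cosmetic difference is that for~(\ref{eq:varfstarhw}) the paper splits $|h|\leq a_T\ktbtime+b_T|\cdot|^\beta\ktbtime$ and bounds the two convolutions separately, whereas you first collapse $|t|^\beta\leq(\tilde a T\btime)^\beta$ on $\supp(\ktbtime)$ and then apply Young's inequality once; both give the same bound. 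Your explicit handling of the complex-valued kernel and of the range $1\leq p<2$ is a welcome clarification that the paper leaves implicit.
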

\begin{proof}
We apply Proposition~\ref{prop:burkh-ineq}, and get
$$
\left\|\overline{\overline{N}}_T(S^{-Tu_0}\kbofreq\ast \ktbtime)\right\|_p \leq A \;\ellnorm{\kbofreq\ast \ktbtime}[\infty]
\sqrt{n}\;,
$$
for some generic constant $A$ and with a positive integer upper bound  $n$ on
the length of the support of $S^{-Tu_0}\kbofreq\ast \ktbtime$, denoted by $\supp(S^{-Tu_0}\kbofreq\ast \ktbtime)$.
Observing that $\supp(S^{-Tu_0}\kbofreq\ast \ktbtime) \subset \supp(S^{-Tu_0}\kbofreq) + \supp(\ktbtime)$, that
the length of $\supp(\kbofreq)$ is of order $\bfreq^{-1}$ and that
the length of $\supp(\ktbtime)$ is of order $T\btime$, we have
$n\lesssim \bfreq^{-1}+T\btime\lesssim T\btime$. We thus obtain the
bound~(\ref{eq:varfstarw}) with~(\ref{eq:g-inftynorm}).

The bound~(\ref{eq:varfstarhw}) is obtained similarly but this time we rely on the
bound 
$$
\ellnorm{\kbofreq\ast h}[\infty]\leq a_T \ellnorm{\kbofreq\ast
  \ktbtime}[\infty]+
b_T\ellnorm{\kbofreq}\ellnorm{\ktbtime}[\infty]\,n^\beta\;.
$$
(Recall that $n$ is length of the support of $\ktbtime$.)
With~(\ref{eq:g-inftynorm}), $n\lesssim T\btime$,~(\ref{eq:K-inftynorm}) and~(\ref{eq:W-betanorm})
we get
$\ellnorm{\kbofreq\ast h}[\infty]\lesssim 
\bfreq^{-1/2}(\btime T)^{-1}(a_T+b_T
(\btime T)^{\beta})$, which yields~(\ref{eq:varfstarhw}).
\end{proof}
\begin{lemma}
  \label{lem:var_lead_term}
  Under the conditions of Theorem~\ref{thm:variance_loc_Bartlett_est}, we have, for all $u_0 \in
  \Rset$, $\omega_0 \in \Rset$, and $\btime,\bfreq,T$ as
  in~(\ref{eq:conditions-T-and-co-timebias-bartlett}),
  \begin{equation}
    \label{eq:var-gammatilde-bound}
\var\left(\int |\overline{\overline{N}}_T(S^{-Tu_0}\kbofreq(\cdot-t))|^2 \ktbtime(t) \;\rmd t
\right)\ \lesssim\ (T\btime\bfreq)^{-1} \;.    
  \end{equation}
\end{lemma}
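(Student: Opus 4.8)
The plan is to expand the variance as a double integral of covariances, $\var\bigl(\int |X_t|^2\,\ktbtime(t)\,\rmd t\bigr)=\int\!\!\int \cov(|X_t|^2,|X_s|^2)\,\ktbtime(t)\,\ktbtime(s)\,\rmd t\,\rmd s$ with the shorthand $X_t:=\overline{\overline{N}}_T(S^{-Tu_0}\kbofreq(\cdot-t))$, and to bound each covariance by means of Corollary~\ref{cor:cov-squares}. The Fubini swap is legitimate because $\esp[|X_t|^2|X_s|^2]\le\|X_t\|_4^2\|X_s\|_4^2$ is bounded uniformly in $t,s$ (see the moment bound below) and $\ktbtime$ is integrable.

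Next I would exploit the localisation of the kernels. Writing $\supp(\kfreq)\subset[-\tilde a,\tilde a]$, the support of $S^{-Tu_0}\kbofreq(\cdot-t)$ is contained in the interval $Tu_0+t+[-\tilde a\bfreq^{-1},\tilde a\bfreq^{-1}]$, of length $2\tilde a\bfreq^{-1}$. Consequently, for every pair $(t,s)$ the functions $h_1=S^{-Tu_0}\kbofreq(\cdot-t)$ and $h_2=S^{-Tu_0}\kbofreq(\cdot-s)$ satisfy one of the three alternatives of Corollary~\ref{cor:cov-squares} with $\gamma=(|t-s|-2\tilde a\bfreq^{-1})_+$: alternative~\ref{item:gamma-ass1} or~\ref{item:gamma-ass2} when $|t-s|>2\tilde a\bfreq^{-1}$ (the two supports are then separated by exactly $\gamma$), and alternative~\ref{item:gamma-ass3} otherwise. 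Fixing some $q>4$, this gives
$$
\left|\cov\bigl(|X_t|^2,|X_s|^2\bigr)\right|
\;\lesssim\;
\left\|X_t\right\|_q^2\,\left\|X_s\right\|_q^2\;\rme^{-\alpha_q\left(|t-s|-2\tilde a\bfreq^{-1}\right)_+}\;.
$$

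Then I would bound $\|X_t\|_q$ uniformly in $t$ using Proposition~\ref{prop:burkh-ineq}: the support of $S^{-Tu_0}\kbofreq(\cdot-t)$ is of length $\lesssim\bfreq^{-1}$, which (using $\bfreq\le1$) may be enclosed in an integer-indexed interval $[j,j+n]$ with $n\in\nset$, $n\lesssim\bfreq^{-1}$, while by~(\ref{eq:K-inftynorm}) its sup norm is $\ellnorm{\kbofreq}[\infty]=\bfreq^{1/2}\ellnorm{\kfreq}[\infty]$; hence $\|X_t\|_q\lesssim\bfreq^{1/2}\cdot\bfreq^{-1/2}=1$, so $\|X_t\|_q^2\lesssim1$. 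Inserting this bound, integrating first over $s$ with $\ktbtime(s)\le\ellnorm{\ktbtime}[\infty]\lesssim(T\btime)^{-1}$ and $\int_\rset\rme^{-\alpha_q(|u|-2\tilde a\bfreq^{-1})_+}\,\rmd u\lesssim\bfreq^{-1}$ (again because $\bfreq\le1$), and then over $t$ using $\int\ktbtime=1$, yields
$$
\var\left(\int |X_t|^2\,\ktbtime(t)\,\rmd t\right)
\;\lesssim\;(T\btime)^{-1}\,\bfreq^{-1}=(T\btime\bfreq)^{-1}\;,
$$
which is the claim.

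The part requiring care, though it is essentially bookkeeping, is the geometric accounting that makes Corollary~\ref{cor:cov-squares} applicable: one must track the $O(\bfreq^{-1})$ supports of the shifted kernels to produce, for every pair $(t,s)$, a bona fide separation parameter $\gamma$ fitting one of the alternatives, and likewise round the support of $S^{-Tu_0}\kbofreq(\cdot-t)$ up to the integer-indexed interval $[j,j+n]$ demanded by Proposition~\ref{prop:burkh-ineq}. The remaining estimates are routine.
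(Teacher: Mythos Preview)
Your proof is correct and follows essentially the same approach as the paper: expand the variance as a double integral of covariances, apply Corollary~\ref{cor:cov-squares} with the separation parameter $\gamma=(|t-s|-\ell)_+$ where $\ell\lesssim\bfreq^{-1}$ is the support length of $\kbofreq$, bound each $\|X_t\|_q$ uniformly by Proposition~\ref{prop:burkh-ineq}, and integrate. The only cosmetic difference is that the paper packages the final double integral as $\ellnorm{\ktbtime\ast\ktbtime}[\infty]\int\rme^{-\alpha_q(|u|-\ell)_+}\,\rmd u$, whereas you bound one factor of $\ktbtime$ by its sup norm and integrate the other; the two computations are equivalent.
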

\begin{proof}
We can write the left-hand side of~(\ref{eq:var-gammatilde-bound}) as
$$
\iint \cov\left(|\overline{\overline{N}}_T(f(\cdot-t))|^2 , |\overline{\overline{N}}_T(f(\cdot-t'))|^2 \right)\ \ktbtime(t)\ \ktbtime(t')\ \;\rmd t \;\rmd t' \ .
$$
Let $\ell$ denote the length of the support of $\kbofreq$, which clearly
satisfies for $\bfreq\in(0,1]$,
\begin{equation}
  \label{eq:support-bound-kbofreq}
  \ell\lesssim \bfreq^{-1} \;.
\end{equation}
 Then with
 $h_1=\kbofreq(\cdot-t)$ and $h_2=\kbofreq(\cdot-t')$, setting $\gamma:=(|t-t'|
 - \ell)_+$, we have one of the
 assertions~\ref{item:gamma-ass1}, \ref{item:gamma-ass2}
 or~\ref{item:gamma-ass3} which is satisfied. Hence
 Corollary~\ref{cor:cov-squares} gives that, for some $q>4$,
\begin{equation}\label{eq:gap_cov_bound}
\left| \cov\left(\left|\overline{\overline{N}}_T(h_1)\right|^2,\left|\overline{\overline{N}}_T(h_2)\right|^2\right)\right|
\leq C_q  \left\|\overline{\overline{N}}_T(h_1)\right\|_q^2\;
\left\|\overline{\overline{N}}_T(h_2)\right\|_q^2
\;\rme^{-\alpha_q (|t-t'| - \ell)_+ } \; .
\end{equation}
Further we apply Proposition~\ref{prop:burkh-ineq} with~(\ref{eq:K-inftynorm})
and~(\ref{eq:support-bound-kbofreq}) and get, for $i=1,2$,
$\left\|\overline{\overline{N}}_T(h_i)\right\|_q^2  \lesssim 
\left(\ellnorm{\kbofreq}[\infty]\right)^2 \ell\lesssim1 \;.$
Hence we finally get
\begin{align*}
\var\left(\int |\overline{\overline{N}}_T(S^{-Tu_0}\kbofreq(\cdot-t))|^2 \ktbtime(t) \;\rmd t
\right)\ &\lesssim\iint \rme^{-\alpha_q (|t-t'| - \ell)_+ } \ktbtime(t)\
\ktbtime(t')\ \;\rmd t \;\rmd t'\\
&\leq\ellnorm{\ktbtime\ast\ktbtime}[\infty]\int \rme^{-\alpha_q (|u| - \ell)_+ }\,\rmd u\; .
\end{align*}
Now, by~(\ref{eq:support-bound-kbofreq}) we have $\int \rme^{-\alpha_q (|u| -
  \ell)_+ }\,\rmd u\lesssim \bfreq^{-1}$ and by~(\ref{eq:W-onenorm})
and~(\ref{eq:W-inftynorm}), we have
$\ellnorm{\ktbtime\ast\ktbtime}[\infty]\lesssim (\btime T)^{-1}$. Hence we
get~(\ref{eq:var-gammatilde-bound}) and the proof is concluded.
\end{proof}
\begin{lemma}
  \label{lem:approximation-exact-centering}
  Under the conditions of Theorem~\ref{thm:variance_loc_Bartlett_est}, we have,
 for all $u_0 \in \Rset$, $\omega_0 \in \Rset$, and $\btime,\bfreq,T$ as
  in~(\ref{eq:conditions-T-and-co-timebias-bartlett}),
  \begin{equation}
    \label{eq:gamma-tildegamma-approx}
\left\|\widehat{\gamma}_{\bfreq,\btime}(u_0;\omega_0)-\widetilde{\gamma}_{\bfreq,\btime}(u_0;\omega_0)\right\|_2
\lesssim \btime^{2\beta} \bfreq^{-1} +  \left(T\btime\bfreq\right)^{-1}
\;,    
  \end{equation}
where $\widehat{\gamma}_{\bfreq,\btime}(u_0;\omega_0)$ and
$\widetilde{\gamma}_{\bfreq,\btime}(u_0;\omega_0)$ are respectively defined
by~(\ref{eq:kernel-est}) and~(\ref{eq:exact-centering-approx}). 
\end{lemma}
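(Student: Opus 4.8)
The plan is to isolate, in the difference $\widehat{\gamma}_{\bfreq,\btime}(u_0;\omega_0)-\widetilde{\gamma}_{\bfreq,\btime}(u_0;\omega_0)$, the discrepancy between the two centerings: the ``approximate'' one by $\esp[N(\cdot;u_0)]$ used in~(\ref{eq:est_two_terms_decomp}) and the exact one by $\esp[N_T(\cdot)]$ used in~(\ref{eq:exact-centering-approx}). Set $f=S^{-Tu_0}\kbofreq$ and, for $t\in\rset$, $e_t:=\esp[\overline{N}_T(f(\cdot-t);u_0)]$, which is exactly the quantity bounded in~(\ref{eq:bias-t-u_0-shift}). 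Since $e_t$ is deterministic and $\overline{N}_T(f(\cdot-t);u_0)=\overline{\overline{N}}_T(f(\cdot-t))+e_t$, substituting this in~(\ref{eq:est_two_terms_decomp}), using~(\ref{eq:est-first-order-centered}), and expanding the squared moduli, I would obtain
\[
\widehat{\gamma}_{\bfreq,\btime}(u_0;\omega_0)-\widetilde{\gamma}_{\bfreq,\btime}(u_0;\omega_0)=(\mathrm{A})+(\mathrm{B})+(\mathrm{C}),
\]
with $(\mathrm{A})=\int|e_t|^2\,\ktbtime(t)\,\rmd t$ deterministic, $(\mathrm{B})=2\,\Re\!\int\overline{e_t}\,\overline{\overline{N}}_T(f(\cdot-t))\,\ktbtime(t)\,\rmd t$, and $(\mathrm{C})=-\,|\overline{N}_T(f\ast\ktbtime;u_0)|^2$. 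The point is that $(\mathrm{A})$ and $(\mathrm{C})$ carry a squared bias and should be negligible, whereas $(\mathrm{B})$ is the only genuinely first-order error term.

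The deterministic term $(\mathrm{A})$ coincides with the term (II) appearing in the proof of Theorem~\ref{thm:total-bias}, so $(\mathrm{A})\lesssim\btime^{2\beta}\bfreq^{-1}$ is already available. For $(\mathrm{B})$, the key observation is that, by linearity of $\overline{\overline{N}}_T$ and Fubini, $(\mathrm{B})=2\,\Re\big(\overline{\overline{N}}_T(S^{-Tu_0}\kbofreq\ast h)\big)$ with $h(t):=\overline{e_t}\,\ktbtime(t)$; by~(\ref{eq:bias-t-u_0-shift}) this $h$ satisfies $|h(t)|\le(a_T+b_T|t|^\beta)\,\ktbtime(t)$ with $a_T=T^{-\beta}\bfreq^{-1/2-\beta}$ and $b_T=T^{-\beta}\bfreq^{-1/2}$, so bound~(\ref{eq:varfstarhw}) of Lemma~\ref{lem:useful-bounds-exact-centering} gives $\|(\mathrm{B})\|_2\lesssim\big(T^{-\beta}\bfreq^{-1/2-\beta}+\btime^{\beta}\bfreq^{-1/2}\big)\,(T\btime\bfreq)^{-1/2}$. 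For $(\mathrm{C})$, I would use $\||\overline{N}_T(f\ast\ktbtime;u_0)|^2\|_2=\|\overline{N}_T(f\ast\ktbtime;u_0)\|_4^2$ and split $\overline{N}_T(f\ast\ktbtime;u_0)=\overline{\overline{N}}_T(f\ast\ktbtime)+\esp[\overline{N}_T(f\ast\ktbtime;u_0)]$: the fluctuation term is $\lesssim(T\btime\bfreq)^{-1/2}$ by bound~(\ref{eq:varfstarw}) of Lemma~\ref{lem:useful-bounds-exact-centering}, and the deterministic term is $\lesssim\btime^{\beta}\bfreq^{-1/2}+T^{-\beta}\bfreq^{-1/2-\beta}$ by~(\ref{eq:mean-approx-hawkes-local}) of Theorem~\ref{thm:mean-var-approx} applied to $g=\kbofreq\ast\ktbtime$ together with~(\ref{eq:g-onenorm}) and~(\ref{eq:g-betanorm}) (its square is precisely the term (IIIb) in the proof of Theorem~\ref{thm:total-bias}).

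It then remains to add the three contributions and simplify. Under~(\ref{eq:conditions-T-and-co-timebias-bartlett}) we have $T\btime\bfreq\ge1$, hence $T^{-1}\le\btime\bfreq$; this lets me absorb $T^{-\beta}\bfreq^{-1/2-\beta}$ into $\btime^{\beta}\bfreq^{-1/2}$ and $T^{-2\beta}\bfreq^{-1-2\beta}$ into $\btime^{2\beta}\bfreq^{-1}$, reducing everything to multiples of $\btime^{2\beta}\bfreq^{-1}$, $(T\btime\bfreq)^{-1}$ and the single mixed term $\btime^{\beta}\bfreq^{-1/2}(T\btime\bfreq)^{-1/2}=\btime^{\beta-1/2}T^{-1/2}\bfreq^{-1}$; the latter is handled by the elementary inequality $\btime^{\beta-1/2}T^{-1/2}=(\btime^{2\beta})^{1/2}\big((T\btime)^{-1}\big)^{1/2}\le\frac12\big(\btime^{2\beta}+(T\btime)^{-1}\big)$, so it is $\lesssim\btime^{2\beta}\bfreq^{-1}+(T\btime\bfreq)^{-1}$, and the squared pieces from $(\mathrm{C})$ are treated the same way. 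This yields the asserted bound $\|\widehat{\gamma}_{\bfreq,\btime}(u_0;\omega_0)-\widetilde{\gamma}_{\bfreq,\btime}(u_0;\omega_0)\|_2\lesssim\btime^{2\beta}\bfreq^{-1}+(T\btime\bfreq)^{-1}$. I expect the main difficulty to be organizational rather than probabilistic — all the hard estimates are packaged in Lemma~\ref{lem:useful-bounds-exact-centering} — namely, recognizing $S^{-Tu_0}\kbofreq\ast h$ and $S^{-Tu_0}\kbofreq\ast\ktbtime$ so that these bounds apply verbatim, and performing the $T^{-1}\le\btime\bfreq$ and AM--GM bookkeeping so that every secondary term, in particular the mixed bias$\times$fluctuation term in $(\mathrm{B})$, is genuinely swallowed by $\btime^{2\beta}\bfreq^{-1}+(T\btime\bfreq)^{-1}$.
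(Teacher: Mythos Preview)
Your proposal is correct and follows essentially the same route as the paper: the same decomposition via $\overline{N}_T=\overline{\overline{N}}_T+e_t$, the same identification of the cross term $(\mathrm{B})$ as $\overline{\overline{N}}_T(S^{-Tu_0}\kbofreq\ast h)$ so that Lemma~\ref{lem:useful-bounds-exact-centering} applies, and the same AM--GM step to absorb the mixed term. The only cosmetic difference is that the paper bounds the second squared modulus by $|a+b|^2\le 2|a|^2+2|b|^2$ (splitting your $(\mathrm{C})$ into its $C_T$ and $D_T$), whereas you pass through $\|\cdot\|_4$ and split inside; both lead to the same estimates.
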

\begin{proof}
By definitions~(\ref{eq:approx-centering})
and~(\ref{eq:true-centering-def}), we have, for any integrable test function $f$,
$\overline{\overline{N}}_T(f)=\overline{N}_T(f;u_0)-\esp[\overline{N}_T(f;u_0)]$.
Thus,~(\ref{eq:est_two_terms_decomp}) and~(\ref{eq:est-first-order-centered})
yield the following expression for $\widehat{\gamma}_{\bfreq,\btime}(u_0;\omega_0)$
$$
\int\left|\overline{\overline{N}}_T(f(\cdot-t))+\esp[\overline{N}_T(f(\cdot-t);u_0)]\right|^2\,\ktbtime(t)\;\rmd
 t - \left|\overline{\overline{N}}_T(f\ast\ktbtime)+\esp[\overline{N}_T(f\ast\ktbtime;u_0)] \right|^2 \ ,
$$
where we used the test function $f=S^{-Tu_0}\kbofreq$.
Developing the first square modulus and using for the second that
$|a+b|^2\leq2|a|^2+2|b|^2$, and since  $\int\ktbtime=1$, we obtain
\begin{equation}
  \label{eq:decomp1-approx-bartlett-centered}
\left|\widehat{\gamma}_{\bfreq,\btime}(u_0;\omega_0)-\widetilde{\gamma}_{\bfreq,\btime}(u_0;\omega_0)-
2\Re (B_T)\right|\leq
A_T+2C_T+2D_T\;,  
\end{equation}
where we set, denoting by $\conjugate{f}$ the complex conjugate of $f$, 
\begin{align*}
  A_T&:=
  \int\left|\esp[\overline{N}_T(f(\cdot-t);u_0)]\right|^2\,\ktbtime(t)\;\rmd t \;,
  \\
B_T&:=\int \overline{\overline{N}}_T(f(\cdot-t))\esp[\overline{N}_T(\conjugate{f}(\cdot-t);u_0)]\,\ktbtime(t)\;\rmd
 t\;,\\
 C_T&:=\left|\overline{\overline{N}}_T(f\ast\ktbtime)
 \right|^2\quad\text{and}\quad 
D_T:=\left|\esp[\overline{N}_T(f\ast\ktbtime;u_0)]\right|^2\;.
\end{align*}
Note that $A_T$ and $D_T$ have been treated in the proof of
Theorem~\ref{thm:total-bias} as  the (deterministic) terms
$(\rmI\rmI)$ and $(\rmI\rmI\rmI b)$.
The assumptions in Theorem~\ref{thm:total-bias} are weaker than that of this
lemma. Hence we can directly use~\ref{item:II-IIIb} of the proof of Theorem~\ref{thm:total-bias} and obtain that
$$
A_T,D_T\lesssim\btime^{2\beta} \bfreq^{-1}\;.
$$
The bound~(\ref{eq:varfstarw}) in Lemma~\ref{lem:useful-bounds-exact-centering} immediately gives
$$
\|C_T\|_2=\left\|\overline{\overline{N}}_T(S^{-Tu_0}\kbofreq\ast\ktbtime)\right\|_4^2\lesssim
\left(T\btime\bfreq\right)^{-1} \;.
$$
and we are left with treating $B_T$. 
Note that we can rewrite $B_T$ as
$B_T= \overline{\overline{N}}_T(f\ast h)$, where $h$ is the (deterministic) function $t\mapsto
\esp[\overline{N}_T(\conjugate{f}(\cdot-t);u_0)]\,\ktbtime(t)$. Now,
by~(\ref{eq:bias-t-u_0-shift}), we have, for all $t\in\rset$,
$$
|h(t)|\lesssim
\bfreq^{-1/2} T^{-\beta}\left(\bfreq^{-\beta}+|t|^\beta\right) \ktbtime(t)\;.
$$
Applying~(\ref{eq:varfstarhw}) with $a_T=\bfreq^{-1/2} (\bfreq T)^{-\beta}$ and $b_T=\bfreq^{-1/2} T^{-\beta}$
in Lemma~\ref{lem:useful-bounds-exact-centering}, it
follows that 
$$
\|B_T\|_2\lesssim
\bfreq^{-1/2}T^{-\beta}\left(\bfreq^{-\beta}+(\btime T)^\beta\right)(T\btime\bfreq)^{-1/2}
\lesssim \bfreq^{-1/2}\btime^\beta(T\btime\bfreq)^{-1/2}\;,
$$
for $T\btime\bfreq\geq1$.

Inserting the previous bounds on $A_T$, $B_T$, $C_T$ and $D_T$
in~(\ref{eq:decomp1-approx-bartlett-centered}) yields
$$
\left\|\widehat{\gamma}_{\bfreq,\btime}(u_0;\omega_0)-\widetilde{\gamma}_{\bfreq,\btime}(u_0;\omega_0)\right\|_2
\lesssim \bfreq^{-1/2}\btime^\beta(T\btime\bfreq)^{-1/2}+\btime^{2\beta}
\bfreq^{-1} +  (T\btime\bfreq)^{-1}
\;.
$$
Using that $2\bfreq^{-1/2}\btime^\beta(T\btime\bfreq)^{-1/2}\leq\btime^{2\beta}
\bfreq^{-1} +  (T\btime\bfreq)^{-1}$, we get~(\ref{eq:gamma-tildegamma-approx}). 
\end{proof}
\subsection{Proof of Theorem~\ref{thm:variance_loc_Bartlett_est} (variance of spectral estimator)}

Lemmas~\ref{lem:var_lead_term} and~\ref{lem:approximation-exact-centering}, together with
the definition of $\widetilde{\gamma}_{\bfreq,\btime}(u_0;\omega_0)$
in~(\ref{eq:exact-centering-approx}), directly give that 
$$
\var\left(\widehat{\gamma}_{\bfreq,\btime}(u_0;\omega_0)\right) \ \lesssim\  \
\frac{1}{T\btime\bfreq} + \left( \btime^{2\beta} \bfreq^{-1} +  (T\btime\bfreq)^{-1}\right)^2\;.
$$
Since
$T\btime\bfreq\geq1$ in~(\ref{eq:conditions-T-and-co-timebias-bartlett}), the
second term within the squared parentheses is at most of the same order as the
term outside the squared parentheses and can thus be discarded.
Hence we obtain Theorem~\ref{thm:variance_loc_Bartlett_est}.

\section{Proof of Proposition~\ref{prop:uniform-moment-bound}}
\label{sec:proof-prop-moment-bound}
This proof uses the notation and definitions of
\cite[Section~2.1]{roueff-vonsachs-sansonnet2016}, the essential of which we
now briefly recall.  Let $m$ be a positive integer and $\ouvert$ be an open
subset of $\cset^m$.  Define $\dd$ as the set of holomorphic functions from
$\ouvert$ to $\rset$. We denote, for all $h\in\dd$ and compact sets
$K\subset\ouvert$,
$$
\normd{h}[K]=\sup_{\zed\in K}\left| h(\zed)\right| \;.
$$
Recall that a holomorphic function $h$ on $\ouvert$ is infinitely
differentiable on $\ouvert$.
We denote by $\D$ the set of $\Rset\times\ouvert\to\Rset$ functions $h$
such that, for all $t\in\Rset$, $\zed\mapsto h(t,\zed)$ belongs to
$\dd$.  
For any $p\in[1,\infty]$, we further denote by $\Dp[p]$ the subset of functions
$h\in\D$ such that the function $t\mapsto \sup_{\zed\in K}{h(t,\zed)}$ has
finite $L^p$-norm on $\rset$ for all compact sets $K\subset\ouvert$.
We denote
$$
\normD{h}[p]:=
\ellnorm{ \sup_{\zed\in K}\left|h(\cdot,\zed)\right|}[p] \;.
$$
We also denote by $\bdp{r}[p]$ the set of all functions $g\in \Dp[p]$ such that
$\normD{g}[p]<r$.

Consider now any
\begin{align}\label{eq:def-r1-rinf-D}
r_\infty\in(0,-\log\zzeta)
\quad\text{and}\quad
r_1\in \left(0,r_{\infty}\rme^{-r_\infty}\zzeta[\infty]^{-1}\right)\;.
\end{align}
and set
\begin{align}
\label{eq:cond-gD-1}
& R_1:= r_1\left(1-\zzeta\rme^{r_\infty}\right) \in(0,r_1) \;,\\
\label{eq:cond-gD-infty}
& R_\infty:= r_\infty-\rme^{r_\infty}\zzeta[\infty]r_1\in(0,r_\infty)\;.
\end{align}
Let $K\subset\ouvert$ be a compact set and
$g\in\bdp{R_1}\cap\bdp{R_\infty}[\infty]$.

Corollary~12 and Eq.~(33) in~\cite{roueff-vonsachs-sansonnet2016} give that if
  $g\in\bdp{R_1}\cap\bdp{R_\infty}[\infty]$, with
 $R_1$, $R_\infty$ defined
  by~(\ref{eq:cond-gD-1}) and~(\ref{eq:cond-gD-infty}), we have, for all
  $\zed\in K$,
$$
\mathcal{L}(g(\cdot,\zed)):=\esp\left[\rme^{N(g(\cdot,\zed))}\right] = \exp\int
\left(\exp\left(\Phi_{g}^\infty(t^c,\zed)\right)-1\right)\,
\lambda_c(t^c)\,\rmd t^c\;,
$$
where $\Phi_{g}^\infty$ is defined
in~\cite[Definition~3]{roueff-vonsachs-sansonnet2016} as as element of
$\bdp{r_1}\cap\bdp{r_\infty}[\infty]$.

Let now $h:\rset\to\rset_+$ be a bounded and integrable function and set
$g(t,\zed)=\zed\,h(t)$. Let $\ouvert=\rset$ and $K=[-r,r]$ for some
$r>0$. The previous display and the bound $|\rme^a-1|\leq
|a|\rme^{|a|}$ give that
$$
\sup_{|\zed|\leq r}\left|\esp\left[\rme^{\zed\,N(h)}\right]\right|
\leq \exp\left(\ellnorm{\lambda_c}[\infty]
\,\rme^{\normD{\Phi_{g}^\infty}[\infty]}
\,\normD{\Phi_{g}^\infty}
\right)\leq
\exp\left(\ellnorm{\lambda_c}[\infty]
\,\rme^{r_{\infty}}\,r_1
\right)\;.
$$
Here $r_1$ and $r_\infty$ should be taken as small as possible provided
that~(\ref{eq:def-r1-rinf-D}) holds and   $g\in\bdp{R_1}\cap\bdp{R_\infty}[\infty]$, with
 $R_1$, $R_\infty$ defined
  by~(\ref{eq:cond-gD-1}) and~(\ref{eq:cond-gD-infty}).
The specific choice of $g$ and $K$ here gives
$g\in\bdp{R_1}\cap\bdp{R_\infty}[\infty]$ if
$$
r\ellnorm{h}\leq R_1\quad\text{and}\quad
r\ellnorm{h}[\infty]\leq R_\infty\;.
$$
So we conclude that
$$
\sup_{|\zed|\leq r}\left|\esp\left[\rme^{\zed\,N(h)}\right]\right|
\leq \exp\left(\ellnorm{\lambda_c}[\infty]
\,\rme^{r_{\infty}}\,r_1
\right)\;,
$$
for any  $r_1$ and $r_\infty$ satisfying~(\ref{eq:def-r1-rinf-D}) and $r$ satisfying
$$
r< \min\left(\frac{r_1\left(1-\zzeta\rme^{r_\infty}\right)}{\ellnorm{h}},
\frac{r_\infty-\rme^{r_\infty}\zzeta[\infty]r_1}{\ellnorm{h}[\infty]}\right)\;.
$$
Let us set $r_\infty=(-\log\zzeta)/2$ so that~(\ref{eq:def-r1-rinf-D}) reduces to
$$
0<r_1<(-\log\zzeta)\zzeta^{1/2}\zzeta[\infty]^{-1}/2\;.
$$
In the  particular case where $\ellnorm{h}\leq1$ and
$\ellnorm{h}[\infty]\leq1$, the condition on $r$ then reads
as
$$
r<\min(r_1(1-\zzeta^{1/2}),(-\log\zzeta)/2-r_1\zzeta[\infty]\zzeta^{-1/2})
=r_1(1-\zzeta^{1/2})\;,
$$
where the last equality holds for the choice of $r_1$ given
by~(\ref{eq:r1def-exp-bound}). We thus get, for all $r<r_1(1-\zzeta^{1/2})$,
$$
\esp\left[\rme^{r\,N(h)}\right] \leq
\exp\left(\ellnorm{\lambda_c}[\infty]
\,\zzeta^{-1/2}\,r_1
\right)\ .
$$
Letting $r$ tend to $r_1(1-\zzeta^{1/2})$, we also get the result for
$r=r_1(1-\zzeta^{1/2})$ which corresponds to~(\ref{eq:exp-bound}) for a
non-negative $h$. To conclude, if $h$ is signed we use that $|N(h)|\leq N(|h|)$
and apply the previous bound to $|h|$.


\begin{thebibliography}{24}
\providecommand{\natexlab}[1]{#1}
\providecommand{\url}[1]{\texttt{#1}}
\expandafter\ifx\csname urlstyle\endcsname\relax
  \providecommand{\doi}[1]{doi: #1}\else
  \providecommand{\doi}{doi: \begingroup \urlstyle{rm}\Url}\fi

\bibitem[Aalen(1975)]{aalen75}
O.~Aalen.
\newblock \emph{Statistical inference for a family of counting processes}.
\newblock PhD thesis, University of California, Berkeley, 1975.

\bibitem[Andersen et~al.(1985)Andersen, Borgan, Hjort, Arjas, Stene, and
  Aalen]{andersen85}
P.~K. Andersen, {{\o{}}}. Borgan, N.~L. Hjort, E.~Arjas, J.~Stene, and
  O.~Aalen.
\newblock Counting process models for life history data: A review [with
  discussion and reply].
\newblock \emph{Scandinavian Journal of Statistics}, 12\penalty0 (2):\penalty0
  97, 1985.
\newblock ISSN 03036898.

\bibitem[Bacry et~al.(2013)Bacry, Delattre, Hoffmann, and
  Muzy]{bacry-delattre-hoffmann-muzy-2010}
E.~Bacry, S.~Delattre, M.~Hoffmann, and J.F. Muzy.
\newblock Modelling microstructure noise with mutually exciting point
  processes.
\newblock \emph{Quant.\,Finance}, 13\penalty0 (1):\penalty0 65--77, 2013.

\bibitem[Birr et~al.(2014)Birr, Volgushev, Kley, Dette, and
  Hallin]{hallin-et-al2014}
S.~Birr, S.~Volgushev, T.~Kley, H.~Dette, and M.~Hallin.
\newblock Quantile spectral analysis for locally stationary time series.
\newblock to appear in the Journal of the Royal Statistical Society: Series~B,
  2014.

\bibitem[Chen and Hall(2013)]{chen-hall-2013}
F.~Chen and P.~Hall.
\newblock Inference for a non-stationary self-exciting point process with an
  application in ulta-high frequency financial data modeling.
\newblock \emph{J. Appl. Probab.}, 50:\penalty0 1006--1024, 2013.

\bibitem[Chen and Hall(2016)]{chen-hall-2016}
F.~Chen and P.~Hall.
\newblock Nonparametric estimation for self-exciting point processes—a
  parsimonious approach.
\newblock \emph{Journal of Computational and Graphical Statistics}, 25\penalty0
  (1):\penalty0 209--224, 2016.
\newblock \doi{10.1080/10618600.2014.1001491}.
\newblock URL \url{http://dx.doi.org/10.1080/10618600.2014.1001491}.

\bibitem[Dahlhaus(1996{\natexlab{a}})]{dahlhaus95}
R.~Dahlhaus.
\newblock Asymptotic statistical inference for nonstationary processes with
  evolutionary spectra.
\newblock In \emph{Athens Conference on Applied Probability and Time Series
  Analysis, Vol.\ II (1995)}, volume 115 of \emph{Lecture Notes in Statist.},
  pages 145--159. Springer, New York, 1996{\natexlab{a}}.

\bibitem[Dahlhaus(1996{\natexlab{b}})]{dahlhaus96}
R.~Dahlhaus.
\newblock On the {K}ullback-{L}eibler information divergence of locally
  stationary processes.
\newblock \emph{Stochastic Process.\,Appl.}, 62:\penalty0 139--168,
  1996{\natexlab{b}}.

\bibitem[Dahlhaus(2000)]{dahlhaus2000}
R.~Dahlhaus.
\newblock A likelihood approximation for locally stationary processes.
\newblock \emph{Ann. Statist.}, 28\penalty0 (6):\penalty0 1762--1794, 2000.
\newblock ISSN 0090-5364.

\bibitem[Dahlhaus(2009)]{dahlhaus-2009}
R.~Dahlhaus.
\newblock Local inference for locally stationary time series based on the
  empirical spectral measure.
\newblock \emph{J.\,Econometrics}, 151:\penalty0 101--112, 2009.

\bibitem[Dahlhaus and Tunyavetchakit(2016)]{dahlhaus16}
R.~Dahlhaus and S.~Tunyavetchakit.
\newblock Volatility decomposition and estimation in time-changed price models.
\newblock Technical report, Heidelberg, 2016.

\bibitem[Daley and Vere-Jones(2003)]{daley-vere-jones-2003}
D.J. Daley and D.~Vere-Jones.
\newblock \emph{{An Introduction to the Theory of Point Processes - Volume I:
  Elementary Theory and Methods}}.
\newblock Probability and its Applications (New York). Springer, second
  edition, 2003.

\bibitem[Dedecker et~al.(2007)Dedecker, Doukhan, Lang, Le{\'o}n, Louhichi, and
  Prieur]{dedecker07}
J.~Dedecker, P.~Doukhan, G.~Lang, J.~R. Le{\'o}n, S.~Louhichi, and C.~Prieur.
\newblock \emph{Weak dependence: with examples and applications}, volume 190 of
  \emph{Lecture Notes in Statistics}.
\newblock Springer, New York, 2007.
\newblock ISBN 978-0-387-69951-6.

\bibitem[Giraud et~al.(2015)Giraud, Roueff, and
  Sanchez-Perez]{giraud-roueff-sanchez-aos2015}
Christophe Giraud, Fran{\c c}ois Roueff, and Andres Sanchez-Perez.
\newblock Aggregation of predictors for non stationary sub-linear processes and
  online adaptive forecasting of time varying autoregressive processes.
\newblock \emph{Ann. Statist.}, 43\penalty0 (6):\penalty0 2412–2450, 2015.
\newblock \doi{10.1214/15-AOS134}.
\newblock URL \url{http://projecteuclid.org/euclid.aos/1444222080}.
\newblock Preprint available at
  \href{http://hal.archives-ouvertes.fr/hal-00984064}{[HAL]} or
  \href{http://arxiv.org/abs/1404.6769}{[arXiv]}.

\bibitem[Hansen et~al.(2015)Hansen, Reynaud-Bouret, and
  Rivoirard]{hansen-reynaud-rivoirard2015}
N.R. Hansen, P.~Reynaud-Bouret, and V.~Rivoirard.
\newblock Lasso and probabilistic inequalities for multivariate point
  processes.
\newblock \emph{Bernoulli}, 21\penalty0 (1):\penalty0 83--143, 2015.

\bibitem[{Mammen}(2017)]{mammen2017}
E.~{Mammen}.
\newblock {Nonparametric estimation of locally stationary Hawkes processe}.
\newblock \emph{ArXiv e-prints}, 2017.

\bibitem[Neumann and von Sachs(1997)]{neumann-vonsachs1997}
Michael~H. Neumann and Rainer von Sachs.
\newblock Wavelet thresholding in anisotropic function classes and application
  to adaptive estimation of evolutionary spectra.
\newblock \emph{Ann. Statist.}, 25\penalty0 (1):\penalty0 38--76, 02 1997.
\newblock \doi{10.1214/aos/1034276621}.
\newblock URL \url{https://doi.org/10.1214/aos/1034276621}.

\bibitem[Ramlau-Hansen(1983)]{ramlau-hansen83}
H.~Ramlau-Hansen.
\newblock Smoothing counting process intensities by means of kernel functions.
\newblock \emph{Ann. Statist.}, 11\penalty0 (2):\penalty0 453--466, 1983.
\newblock ISSN 0090-5364.
\newblock \doi{10.1214/aos/1176346152}.

\bibitem[Reynaud-Bouret and Roy(2006)]{reynaud-b-roy-2006}
P.~Reynaud-Bouret and E.~Roy.
\newblock Some non asymptotic tail estimates for {H}awkes processes.
\newblock \emph{Bull. Belg. Math. Soc. Simon Stevin}, 13:\penalty0 883--896,
  2006.

\bibitem[{Richter} and {Dahlhaus}(2017)]{richter-dahlhaus2017}
S.~{Richter} and R.~{Dahlhaus}.
\newblock {Cross validation for locally stationary processes}.
\newblock \emph{ArXiv e-prints}, 2017.

\bibitem[Roueff et~al.(2016)Roueff, von Sachs, and
  Sansonnet]{roueff-vonsachs-sansonnet2016}
F.~Roueff, R.~von Sachs, and L.~Sansonnet.
\newblock Locally stationary {Hawkes} processes.
\newblock \emph{Stochastic Processes and their Applications}, 126\penalty0
  (6):\penalty0 1710 -- 1743, 2016.

\bibitem[{van Delft} and {Eichler}(2015)]{vandelft-eichler-2017}
A.~{van Delft} and M.~{Eichler}.
\newblock {Data-adaptive estimation of time-varying spectral densities}.
\newblock \emph{ArXiv e-prints}, 2015.

\bibitem[Zheng et~al.(2014)Zheng, Roueff, and
  Abergel]{zheng-roueff-abergel2014}
B.~Zheng, F.~Roueff, and F.~Abergel.
\newblock Modelling bid and ask prices using constrained {H}awkes processes:
  Ergodicity and scaling limit.
\newblock \emph{SIAM J. Finan. Math.}, 5\penalty0 (1):\penalty0 99--136,
  February 2014.
\newblock \doi{10.1137/130912980}.

\bibitem[Zhou and Wu(2009)]{zhou-wu2009}
Z.~Zhou and W.B. Wu.
\newblock Local linear quantile estimation for non-stationary time series.
\newblock \emph{Ann. Statist.}, 37:\penalty0 2696--2729, 2009.

\end{thebibliography}
\end{document}